\newtheorem{thm}{Theorem}
\newtheorem{prop}{Proposition}
\newtheorem{lemma}{Lemma}
\newtheorem{cor}{Corollary}
\newcommand{\im}{\operatorname{im}}
\newcommand{\Rho}{\mathrm{P}}
\newcommand{\End}{\mathop{\mathrm{End}}\nolimits}
\newcommand{\coker}{\operatorname{coker}}
\begin{document}
\title{Conformally Fedosov manifolds}
\author[Michael Eastwood]{Michael Eastwood}
\address{\hskip-\parindent
School of Mathematical Sciences\\
University of Adelaide\\ 
SA 5005\\ 
Australia}
\email{meastwoo@member.ams.org}
\author[Jan Slov\'ak]{Jan Slov\'ak}
\address{\hskip-\parindent
Department of Mathematics and Statistics\\
Masaryk University,\newline
611 37 Brno, Czech Republic}
\email{slovak@math.muni.cz}
\subjclass{53A30, 53A40, 53B15, 53D05.}
\begin{abstract} We introduce the notion of a conformally Fedosov structure and
construct an associated Cartan connection. When an appropriate curvature
vanishes, this allows us to construct a family of natural differential 
complexes akin to the BGG complexes from parabolic geometry.
\end{abstract}
\renewcommand{\subjclassname}{\textup{2010} Mathematics Subject Classification}
\maketitle

\footnotetext{This research was supported by the Czech Grant Agency. The
authors would like to thank the Agency for their generous support under Grant
P201/12/G028.}
\section{Introduction}\label{intro}
On a Riemannian manifold there is a unique torsion-free connection on the
tangent bundle preserving the metric. Known as the {\em Levi Civita\/}
connection, it is the basis for doing calculus and understanding the geometry
on such a manifold. On a symplectic manifold, there is no preferred connection.
Instead, there are many {\em symplectic connections\/}, torsion-free
connections preserving the symplectic form. Choosing one defines what is known
as a {\em Fedosov manifold\/}~\cite{GRS}.

On a conformal manifold there is no preferred connection on the tangent bundle:
each metric in the conformal class gives rise to its own Levi-Civita
connection. Instead, a conformal structure induces a canonically defined {\em
Cartan connection\/}~\cite[\S1.6.7]{CS}. It is the fundamental object in
conformal differential geometry and can be regarded as a connection on an
auxiliary vector bundle~\cite{BEG}. Its curvature provides the basic conformal
invariant.

On a {\em projective\/} manifold~\cite[\S4.1.5]{CS}, there is an equivalence
class of torsion-free connections on the tangent bundle. Again, it is the
Cartan connection, built from these affine connections, which may equivalently
be regarded~\cite{BEG} as a connection on some auxiliary vector bundle and 
whose curvature is the basic projective invariant. 

On a {\em conformally symplectic\/} manifold~\cite{V} there is a local
equivalence class of symplectic forms defined only up to scale. In this article
we shall show that one can combine projective differential geometry with the
notion of a Fedosov manifold to obtain what we shall call {\em conformally
Fedosov\/} manifolds. They are obtained by adding further structure to a
conformally symplectic manifold and have the remarkable property that a 
canonical Cartan connection can then be constructed. This lies outside the 
realm of parabolic differential geometry~\cite{CS}. 

\subsection{Notation and terminology}
Notice that we are choosing to write `conformally Fedosov' rather than `locally
conformally Fedosov' or `locally conformal Fedosov.' These various alternatives
are regularly employed in the context of K\"ahler or symplectic geometry. Our
usage is chosen for several reasons. Firstly, we suppress the word `local' in
our terminology. In this we follow by analogy the standard convention that the
round metric on the sphere is `conformally flat' rather than being `locally
conformally flat,' for example. Secondly, our terminology is reasonably
succinct. Thirdly, for K\"ahler geometry the corresponding terminology of 
`conformally K\"ahler' was introduced by Westlake~\cite{W} already in 1954\@.

We shall often need to manipulate tensors on a smooth manifold and, for 
this purpose, we shall use Penrose's abstract index notation~\cite{OT}. In 
brief, covariant tensors will be decorated with subscripts, contravariant 
tensors with superscripts, and the natural pairing between vectors and 
$1$-forms by repeating an index $X^a\omega_a$ in accordance with the `Einstein 
summation convention.' For any tensor $\phi_{abc}$ we shall write 
$\phi_{(abc)}$ for its symmetric part and $\phi_{[abc]}$ for its skew part. 
For example, to say that $\omega_{ab}$ is a $2$-form is to say that 
$\omega_{ab}=\omega_{[ab]}$ or, equivalently, that $\omega_{(ab)}=0$ and, 
for any torsion-free connection~$\nabla_a$, the expressions
$$\nabla_{[a}\omega_{bc]}\quad\mbox{and}\quad
X^a\nabla_a\omega_{bc}-2(\nabla_{[b}X^a)\omega_{c]a}$$
deliver the exterior derivative of $\omega_{ab}$ and the Lie derivative of
$\omega_{ab}$ in the direction of the vector field~$X^a$, respectively.

We shall write $\Lambda^p$ for the bundle of $p$-forms on a smooth manifold, 
suppressing the name of the manifold itself. The exterior derivative will be 
denoted by $d:\Lambda^p\to\Lambda^{p+1}$.

\section{Conformally symplectic manifolds}
In the first instance, a {\em conformally symplectic\/} manifold~\cite{B,V} is
an even-dimensional manifold $M$ of dimension at least four equipped with a
non-degenerate $2$-form $J$ such that
\begin{equation}\label{def_alpha}dJ=2\alpha\wedge J\end{equation}
for some closed $1$-form~$\alpha$. Non-degeneracy of $J$ ensures
$J:\Lambda^1\to\Lambda^3$ is injective whence $\alpha$ is uniquely determined
by $J$, should such a $1$-form exist. It is called the {\em Lee form\/}
\cite{L} and, in case $\dim M\geq 6$ we see that
$$0=d^2J=2d\alpha\wedge J+2\alpha\wedge dJ
=2d\alpha\wedge J+4\alpha\wedge\alpha\wedge J=
2d\alpha\wedge J$$
and, as $J\wedge\underbar\enskip:\Lambda^2\to\Lambda^4$ is injective, closure
of $\alpha$ is automatic. If we rescale $J$ by a positive smooth function, say
$\hat J=\Omega^2J$, then (\ref{def_alpha}) remains valid with $\alpha$ replaced
by $\hat\alpha=\alpha+\Upsilon$ for $\Upsilon\equiv d\log\Omega$. Hence, the
notion of conformally symplectic is invariant under such rescalings (and also
in dimension $4$ since $d\Upsilon=0$). Locally, we may use this freedom to
eliminate $\alpha$ and obtain an ordinary symplectic structure. Globally,
however, this need not be the case. For example, the rescaled symplectic form
$$J\equiv\left(1/{\|x\|}\right)^2(dx^1\wedge dx^2+dx^3\wedge dx^4+\cdots)$$
on ${\mathbb{R}}^{2n}$ is invariant under dilation $x\mapsto\lambda x$ and,
therefore, descends to a conformally symplectic structure on $S^1\times
S^{2n-1}$ whereas there is no global symplectic form on this manifold. 

More precisely, a {\em conformally symplectic\/} manifold is a pair $(M,[J])$
where $[J]$ is an equivalence class of non-degenerate $2$-forms satisfying
(\ref{def_alpha}) where $J$ and $\hat J$ are said to be equivalent if and only
if $\hat J=\Omega^2J$ for some positive smooth function~$\Omega$. As one often
does in conformal geometry in which a Riemannian metric is only defined up to
local rescaling $g\mapsto\hat g=\Omega^2g$, it is usual to pick a
representative $J$ and work with that representative, whilst checking that
one's conclusions are independent of this choice. The basic example of this
approach is in noting that the requirement (\ref{def_alpha}) is itself
independent of such a choice.

\section{Projective manifolds}
A {\em projective structure\/}~\cite{E1} on a manifold $M$ is an equivalence
class of torsion-free affine connections on $M$, where two connections
$\nabla_a$ and $\hat\nabla_a$ are said to be projectively equivalent if and
only if
\begin{equation}\label{projectivechange}
\hat\nabla_a\phi_b=\nabla_a\phi_b-\nu_a\phi_b-\nu_b\phi_a\end{equation}
for some $1$-form~$\nu_a$. 
\begin{lemma}\label{one} If $J_{ab}$ is skew, then
$$\hat\nabla_{(a}J_{b)c}=\nabla_{(a}J_{b)c}-3\nu_{(a}J_{b)c}.$$
\end{lemma}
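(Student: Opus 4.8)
The plan is to reduce the statement to the defining relation (\ref{projectivechange}) in three moves: extend (\ref{projectivechange}) from $1$-forms to arbitrary covariant $2$-tensors, substitute $J_{ab}$, and then symmetrise over the first two indices while using that $J_{ab}$ is skew. For the extension, I would note that (\ref{projectivechange}) exhibits $\hat\nabla_a$ as $\nabla_a$ modified by an algebraic operator built from $\nu_a$, so the Leibniz rule carries it over to all covariant tensors; concretely, writing a covariant $2$-tensor locally as a finite sum of decomposable tensors $\phi_b\psi_c$, applying (\ref{projectivechange}) to each factor, and using the Leibniz rule for $\hat\nabla_a$ gives
$$\hat\nabla_a(\phi_b\psi_c)=\nabla_a(\phi_b\psi_c)-2\nu_a\phi_b\psi_c-\nu_b\phi_a\psi_c-\nu_c\phi_b\psi_a,$$
and hence, by linearity, for any covariant $2$-tensor $J_{bc}$,
$$\hat\nabla_aJ_{bc}=\nabla_aJ_{bc}-2\nu_aJ_{bc}-\nu_bJ_{ac}-\nu_cJ_{ba}.$$

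It then remains to take the symmetric part over $a$ and $b$. The first two terms on the right become $\nabla_{(a}J_{b)c}$ and $-2\nu_{(a}J_{b)c}$. The term $-\nu_bJ_{ac}$ is just the relabelling of $-\nu_aJ_{bc}$ under $a\leftrightarrow b$, so it symmetrises to a further $-\nu_{(a}J_{b)c}$. Finally $-\nu_cJ_{ba}$ symmetrises to $-\nu_cJ_{(ba)}$, which vanishes precisely because $J_{ab}$ is skew. Adding the surviving contributions yields
$$\hat\nabla_{(a}J_{b)c}=\nabla_{(a}J_{b)c}-3\nu_{(a}J_{b)c},$$
which is the assertion of the lemma.

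I do not expect any genuine obstacle here; the step that rewards attention is the coefficient $2$ of $\nu_aJ_{bc}$ in the transformation rule for $2$-tensors, which arises because the differentiation index $a$ contributes one copy of $\nu_a$ through each of the two tensor slots of $J_{ab}$. It is exactly the split $3=2+1$ --- the $2$ from this term and the $1$ from the relabelled term $\nu_bJ_{ac}$ --- together with the vanishing of $\nu_cJ_{(ab)}$ that produces the stated coefficient. Throughout, one must also remember that the symmetrisation runs only over $a$ and $b$, with $c$ a passive spectator.
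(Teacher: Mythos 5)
Your argument is correct and is essentially the paper's own proof: extend (\ref{projectivechange}) to covariant $2$-tensors by the Leibniz rule to get $\hat\nabla_aJ_{bc}=\nabla_aJ_{bc}-2\nu_aJ_{bc}-\nu_bJ_{ac}-\nu_cJ_{ba}$, then symmetrise over $a,b$ and use skewness of $J_{ab}$ to kill the last term. The extra detail you supply (decomposable tensors, the $3=2+1$ bookkeeping) is just an expansion of the paper's one-line justification.
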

\begin{proof}
The Leibniz rule extends (\ref{projectivechange}) 
to all other tensors. Thus,
$$\hat\nabla_aJ_{bc}=\nabla_aJ_{bc}-2\nu_aJ_{bc}-\nu_bJ_{ac}-\nu_cJ_{ba}$$
and symmetrising over $ab$ gives the desired conclusion.
\end{proof}
\begin{prop}\label{invariance} If $J_{ab}$ is skew, then the requirement that
\begin{equation}\label{secondeq}\nabla_{(a}J_{b)c}=\beta_{(a}J_{b)c}
\end{equation}
for some $1$-form $\beta_a$ is projectively invariant. 
\end{prop}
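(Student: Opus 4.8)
The plan is to reduce everything to Lemma~\ref{one}. The requirement~(\ref{secondeq}) is a condition on a single torsion-free connection, and ``projectively invariant'' means precisely that it holds for one connection in the projective class if and only if it holds for all of them. So it suffices to take a connection $\nabla_a$ satisfying~(\ref{secondeq}) for some $1$-form $\beta_a$, take an arbitrary projectively equivalent connection $\hat\nabla_a$ --- related to $\nabla_a$ by a $1$-form $\nu_a$ as in~(\ref{projectivechange}) --- and exhibit a $1$-form $\hat\beta_a$ with $\hat\nabla_{(a}J_{b)c}=\hat\beta_{(a}J_{b)c}$.

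For this I would simply substitute. By Lemma~\ref{one} we have $\hat\nabla_{(a}J_{b)c}=\nabla_{(a}J_{b)c}-3\nu_{(a}J_{b)c}$, and by hypothesis the first term on the right equals $\beta_{(a}J_{b)c}$. Hence $\hat\nabla_{(a}J_{b)c}=\beta_{(a}J_{b)c}-3\nu_{(a}J_{b)c}$, and since symmetrisation is linear this is $\hat\beta_{(a}J_{b)c}$ with $\hat\beta_a:=\beta_a-3\nu_a$. That establishes one implication; the converse is the same computation with the roles of $\nabla_a$ and $\hat\nabla_a$ interchanged (equivalently, with $\nu_a$ replaced by $-\nu_a$), so~(\ref{secondeq}) is indeed a property of the projective equivalence class alone.

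I do not expect a genuine obstacle here: all of the real work is already contained in the Leibniz-rule computation of Lemma~\ref{one}, and what remains is the bookkeeping of the symmetrised indices together with the observation that the extra term $-3\nu_{(a}J_{b)c}$ can be absorbed by redefining the auxiliary $1$-form. It is perhaps worth remarking --- though not needed for the statement --- that when $J_{ab}$ is in addition non-degenerate the form $\beta_a$ in~(\ref{secondeq}) is uniquely determined by $\nabla_a$ and $J_{ab}$ (contract $\beta_{(a}J_{b)c}$ twice against the inverse of~$J$), so the transformation $\beta_a\mapsto\beta_a-3\nu_a$ is then forced rather than merely available.
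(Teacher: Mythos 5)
Your argument is correct and is essentially the paper's own proof: apply Lemma~\ref{one} and absorb the extra term by setting $\hat\beta_a=\beta_a-3\nu_a$, with the converse following by symmetry of the equivalence. Nothing further is needed.
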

\begin{proof} {From} Lemma~\ref{one}, if $\nabla_a$ is replaced
by~$\hat\nabla_a$ according to (\ref{projectivechange}), then (\ref{secondeq})
still holds but with $\beta_a$ replaced by $\hat\beta_a\equiv\beta_a-3\nu_a$.
\end{proof}

\section{Conformally Fedosov manifolds}
Let $(M,[J])$ be a conformally symplectic manifold. We may express the
requirement (\ref{def_alpha}) in terms of any torsion-free connection
$\nabla_a$ as
\begin{equation}\label{firsteq}\nabla_{[a}J_{bc]}=2\alpha_{[a}J_{bc]}.
\end{equation}
Now let us also insist on~(\ref{secondeq}). As observed in
Proposition~\ref{invariance}, this is only a restriction on the projective
class of~$\nabla_a$. We may assemble these conditions into the following formal 
definition. A {\em conformally Fedosov\/} manifold is a triple 
$(M,[J],[\nabla])$ where 
\begin{itemize}
\item $M$ is a smooth manifold of dimension $2n\geq 4$,
\item $[J]$ is an equivalence class of non-degenerate $2$-forms defined up to 
rescaling $J\mapsto\hat J=\Omega^2J$ for some positive function~$\Omega$,
\item $[\nabla]$ is a projective structure, i.e.~an equivalence class of
torsion-free connections defined up to (\ref{projectivechange}) for 
some $1$-form~$\nu_a$,
\item the following equations hold
\begin{equation}\label{eqns}
\nabla_{[a}J_{bc]}=2\alpha_{[a}J_{bc]}\qquad
\nabla_{[a}\alpha_{b]}=0\qquad\nabla_{(a}J_{b)c}=\beta_{(a}J_{b)c},
\end{equation}
for some $1$-forms $\alpha_a$ and $\beta_a$. 
\end{itemize}
We have already observed that the requirement (\ref{firsteq}) depends only on
the conformal class of~$J_{ab}$ (and that $\nabla_{[a}\alpha_{b]}=0$ is
automatic for $n\geq3$). Proposition~\ref{invariance} says that
(\ref{secondeq}) is projectively invariant. Finally, to make sure that this
definition makes sense, let us observe that (\ref{secondeq}) is also
conformally invariant: if $\hat J_{ab}=\Omega^2J_{ab}$, then (\ref{secondeq})
continues to hold but with $\beta_a$ replaced by
$\hat\beta_a=\beta_a+2\Upsilon_a$.

We shall often have occasion to `raise and lower' indices using $J_{ab}$ and
its inverse~$J^{ab}$. Specifically, let $J_{ac}J^{bc}=\delta_a{}^b$, where
$\delta_a{}^b$ is the Kronecker delta. We then decree that 
$$\phi^a\equiv J^{ab}\phi_b\qquad\psi_b\equiv\psi^aJ_{ab}$$
and henceforth freely make use of these options without comment.

\begin{prop} Let $(M,[J],[\nabla])$ be a conformally Fedosov manifold. Any
representatives $J_{ab}$ and\/ $\nabla_a$ of the structure uniquely
determine the\/ $1$-forms $\alpha_a$ and $\beta_a$ occurring in 
{\rm(\ref{eqns})} and, conversely,
\begin{equation}\label{nablaJ}\textstyle\nabla_aJ_{bc}=
2\alpha_{[a}J_{bc]}+\frac23\beta_{(a}J_{b)c}-\frac23\beta_{(a}J_{c)b}
\end{equation}
determines the full covariant derivative $\nabla_aJ_{bc}$.
\end{prop}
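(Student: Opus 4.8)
The plan is to establish the two assertions separately, each by elementary tensor algebra resting on the $\mathrm{GL}$-decomposition $\Lambda^1\otimes\Lambda^2=\Lambda^3\oplus W$, where $W$ is the kernel of the skew-symmetrisation map $\Lambda^1\otimes\Lambda^2\to\Lambda^3$.

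For the uniqueness of $\alpha_a$ and $\beta_a$: once representatives $J_{ab}$ and $\nabla_a$ are fixed, the quantity $\nabla_aJ_{bc}$ is a well-defined tensor, skew in $bc$, so both its totally skew part $\nabla_{[a}J_{bc]}$ and its symmetrisation $\nabla_{(a}J_{b)c}$ over $ab$ are determined. The former equals $2\alpha_{[a}J_{bc]}$, and since $J$ is non-degenerate the assignment $\alpha_a\mapsto\alpha_{[a}J_{bc]}$ (equivalently $\alpha\mapsto\alpha\wedge J$) is an injection $\Lambda^1\to\Lambda^3$; hence $\alpha_a$ is recovered. The latter equals $\beta_{(a}J_{b)c}$, and contracting with the inverse form $J^{bc}$, using $J_{bc}J^{bc}=2n$ and $\beta_bJ_{ac}J^{bc}=\beta_a$, yields $\nabla_{(a}J_{b)c}J^{bc}=\tfrac{2n+1}{2}\,\beta_a$; as $2n+1\neq0$ this recovers $\beta_a$ (indeed it even exhibits it explicitly).

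For the formula, set $P_{abc}:=2\alpha_{[a}J_{bc]}$ and $Q_{abc}:=\tfrac23\beta_{(a}J_{b)c}-\tfrac23\beta_{(a}J_{c)b}$, so that (\ref{nablaJ}) reads $\nabla_aJ_{bc}=P_{abc}+Q_{abc}$. I would first record the bookkeeping: $P_{abc}$ is totally skew, hence $P_{(ab)c}=0$ and $P_{[abc]}=2\alpha_{[a}J_{bc]}$; while, writing $Q_{abc}=\tfrac23\beta_aJ_{bc}+\tfrac13\beta_bJ_{ac}-\tfrac13\beta_cJ_{ab}$, one sees that $Q_{abc}$ is skew in $bc$, that its cyclic sum over the three indices vanishes so that $Q_{[abc]}=0$, and that $Q_{(ab)c}=\beta_{(a}J_{b)c}$ (all by watching coefficients cancel). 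Then the tensor $D_{abc}:=\nabla_aJ_{bc}-P_{abc}-Q_{abc}$ is skew in $bc$ and, by the first and third equations of (\ref{eqns}) respectively, satisfies $D_{[abc]}=0$ and $D_{(ab)c}=0$. But $D_{(ab)c}=0$ says $D_{abc}$ is also skew in $ab$; being skew under both the transposition of $a,b$ and the transposition of $b,c$, which together generate all permutations of the three indices, $D_{abc}$ is totally skew, whence $D_{abc}=D_{[abc]}=0$ — which is exactly (\ref{nablaJ}).

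The routine part is the coefficient-chasing for $P$ and $Q$; the one conceptual ingredient is the closing observation that skewness under two overlapping transpositions forces total skewness, the elementary shadow of $W$ being the irreducible $\mathrm{GL}(2n,\mathbb{R})$-module attached to a hook Young diagram. I do not anticipate any real obstacle: the proposition amounts to the remark that the two summands in $\Lambda^1\otimes\Lambda^2=\Lambda^3\oplus W$ are precisely what the skew-symmetrisation and the $ab$-symmetrisation appearing in (\ref{eqns}) detect.
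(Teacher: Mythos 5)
Your proof is correct, and it reaches the proposition by a mildly different route than the paper. For $\beta_a$ you use exactly the paper's device: contracting $\nabla_{(a}J_{b)c}=\beta_{(a}J_{b)c}$ with $J^{bc}$, and your $J^{bc}\nabla_{(a}J_{b)c}=\frac{2n+1}{2}\beta_a$ agrees with the identity $2J^{bc}\nabla_{(a}J_{b)c}=(2n+1)\beta_a$ displayed in the paper's proof. For $\alpha_a$ the paper again uses a trace identity (a nonzero multiple of $\alpha_a$ obtained from $J^{bc}\nabla_{[a}J_{bc]}$), whereas you invoke injectivity of $\alpha\mapsto\alpha\wedge J$ from $\Lambda^1$ to $\Lambda^3$; this is precisely how the paper pinned down the Lee form in Section~2, so nothing extra is needed, and it has the small advantage of not depending on the exact dimension-dependent constant in the contraction. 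For the converse, the paper substitutes (\ref{eqns}) into the right-hand side of (\ref{nablaJ}) and expands, which amounts to the algebraic identity $T_{[abc]}+\frac23T_{(ab)c}-\frac23T_{(ac)b}=T_{abc}$ valid for any $T_{abc}$ skew in $bc$; you instead form the difference tensor $D_{abc}=\nabla_aJ_{bc}-P_{abc}-Q_{abc}$, verify the parts of $P$ and $Q$ (your bookkeeping $Q_{abc}=\frac23\beta_aJ_{bc}+\frac13\beta_bJ_{ac}-\frac13\beta_cJ_{ab}$, $Q_{[abc]}=0$, $Q_{(ab)c}=\beta_{(a}J_{b)c}$ checks out), and conclude $D=0$ because skewness under the two overlapping transpositions forces total skewness while $D_{[abc]}=0$. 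The two arguments are equivalent incarnations of the decomposition $\Lambda^1\otimes\Lambda^2=\Lambda^3\oplus W$: the paper's is a one-line expansion, while yours makes explicit the structural reason the expansion must work, namely that a tensor skew in $bc$ is determined by its totally skew part together with its $ab$-symmetrisation, which is slightly longer but arguably more illuminating and requires no coefficient miracle at the end.
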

\begin{proof} Let $J^{ab}$ denote the inverse of $J_{ab}$. Then 
the identities
$$3J^{bc}\nabla_{[a}J_{bc]}=2(n-2)\alpha_a\quad\mbox{and}\quad
2J^{bc}\nabla_{(a}J_{b)c}=(2n+1)\beta_a$$
readily follow from (\ref{eqns}). Conversely, expanding the right hand side of
$$\textstyle
2\alpha_{[a}J_{bc]}+\frac23\beta_{(a}J_{b)c}-\frac23\beta_{(a}J_{c)b}
=\nabla_{[a}J_{bc]}+\frac23\nabla_{(a}J_{b)c}-\frac23\nabla_{(a}J_{c)b}$$
gives $\nabla_aJ_{bc}$, as required.\end{proof}
\begin{prop}\label{making_the_mastereq}
For any conformally Fedosov manifold $(M,[J],[\nabla])$, if a 
representative $2$-form $J_{ab}$ is chosen, then there is a unique 
torsion-free connection in the projective class such that 
\begin{equation}\label{mastereq}\nabla_aJ_{bc}=2J_{a[b}\alpha_{c]}.
\end{equation}
\end{prop}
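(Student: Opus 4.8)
The plan is to realise the required connection as a specific projective modification of an arbitrary representative of $[\nabla]$, and then to use the normalisation~(\ref{mastereq}) to see that this modification is forced.

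First I would fix the chosen representative $J_{ab}$ together with any torsion-free connection $\nabla_a$ in the projective class. By the previous proposition this determines $1$-forms $\alpha_a$ and $\beta_a$, and the full covariant derivative $\nabla_aJ_{bc}$ is then given by~(\ref{nablaJ}); in particular $\nabla_{[a}J_{bc]}=2\alpha_{[a}J_{bc]}$ and $\nabla_{(a}J_{b)c}=\beta_{(a}J_{b)c}$. I would note that $\alpha_a$ is in fact the Lee form of $J_{ab}$, hence independent of the choice of $\nabla_a$: the expression $\nabla_{[a}J_{bc]}$ is just the exterior derivative of $J_{ab}$, the same for every torsion-free connection, and $J:\Lambda^1\to\Lambda^3$ is injective. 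Consequently, if $\nabla_a$ is replaced by $\hat\nabla_a$ according to~(\ref{projectivechange}) for a $1$-form $\nu_a$, then $\alpha_a$ is unchanged while, by Proposition~\ref{invariance}, $\beta_a$ is replaced by $\hat\beta_a=\beta_a-3\nu_a$.

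Next I would identify the correct $\nu_a$. Writing the right-hand side of~(\ref{mastereq}) as $J_{ab}\alpha_c-J_{ac}\alpha_b$, one checks that it is skew in $bc$, that its totally skew part is $2\alpha_{[a}J_{bc]}$ (so it is consistent with the first equation of~(\ref{eqns})), and that its part symmetric in $ab$ is $-\alpha_{(a}J_{b)c}$. Now a tensor skew in $bc$ is determined by its totally skew part together with its part symmetric in $ab$ — because a tensor skew in $ab$ as well as in $bc$ is necessarily totally skew. Comparing with~(\ref{nablaJ}), equation~(\ref{mastereq}) for $\hat\nabla_a$ is therefore equivalent to $\hat\beta_{(a}J_{b)c}=-\alpha_{(a}J_{b)c}$, and contracting with the inverse $J^{bc}$ this is in turn equivalent to $\hat\beta_a=-\alpha_a$. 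By the previous paragraph this singles out a unique $\nu_a$, namely $\nu_a=\frac13(\alpha_a+\beta_a)$, giving both the existence and the uniqueness of a connection in the projective class satisfying~(\ref{mastereq}). Uniqueness can also be argued directly: two connections satisfying~(\ref{mastereq}) and differing by a $1$-form $\mu_a$ as in~(\ref{projectivechange}) would have $2\mu_aJ_{bc}+\mu_bJ_{ac}+\mu_cJ_{ba}=0$, whence $(4n+2)\mu_a=0$ on contracting with $J^{bc}$.

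I expect the only genuine work to be the symmetrisation bookkeeping behind the three assertions about $J_{ab}\alpha_c-J_{ac}\alpha_b$ above; there is no real obstacle. One could replace the representation-theoretic shortcut by a hands-on verification: substitute $\hat\beta_a=-\alpha_a$ into~(\ref{nablaJ}) and expand $\hat\nabla_aJ_{bc}=\nabla_aJ_{bc}-2\nu_aJ_{bc}-\nu_bJ_{ac}-\nu_cJ_{ba}$ with $\nu_a=\frac13(\alpha_a+\beta_a)$, whereupon the $\beta_a$-contributions cancel and the $\alpha_a$-contributions collect precisely into $J_{ab}\alpha_c-J_{ac}\alpha_b$. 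The point worth emphasising is that the particular combination on the right of~(\ref{mastereq}) is exactly the one produced by forcing $\beta_a=-\alpha_a$ in~(\ref{nablaJ}).
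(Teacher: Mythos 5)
Your proposal is correct and follows essentially the same route as the paper: exploit $\hat\beta_a=\beta_a-3\nu_a$ (with $\alpha_a$ unchanged) to fix the unique $\nu_a=\frac13(\alpha_a+\beta_a)$ forcing $\beta_a=-\alpha_a$, and then read off (\ref{mastereq}) from~(\ref{nablaJ}). Your explicit symmetry-decomposition argument and the direct $(4n+2)\mu_a=0$ uniqueness check are just slightly more detailed versions of the same computation the paper leaves implicit.
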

\begin{proof}
When the connection $\nabla_a$ is replaced by $\hat\nabla_a$ according to
(\ref{projectivechange}), the $1$-form $\alpha_a$ does not change but $\beta_a$
is replaced by $\hat\beta_a=\beta_a-3\nu_a$, Therefore, we can uniquely arrange
that $\alpha_a+\beta_a=0$, in which case (\ref{nablaJ}) implies that 
$$\textstyle\nabla_aJ_{bc}=
2\alpha_{[a}J_{bc]}-\frac23\alpha_{(a}J_{b)c}+\frac23\alpha_{(a}J_{c)b}$$
and expanding the right hand side gives $2J_{a[b}\alpha_{c]}$, as required.
\end{proof}
In view of this Proposition, an alternative definition of a conformally Fedosov
manifold is as follows. Firstly, define an equivalence relation on pairs
$(J,\nabla)$ consisting of a non-degenerate symplectic form $J_{ab}$ and a 
torsion-free connection $\nabla_a$ by allowing simultaneous replacements 
\begin{equation}\label{simultaneousrescaling}
\begin{array}{rcl}J_{ab}&\!\!\mapsto\!\!&
\hat J_{ab}=\Omega^2J_{ab}\\
\nabla_a\phi_b&\!\!\mapsto\!\!&
\hat\nabla_a\phi_b=\nabla_a\phi_b-\Upsilon_a\phi_b-\Upsilon_b\phi_a,
\end{array}\enskip
\mbox{where }\Upsilon_a=\nabla_a\log\Omega.
\end{equation}
Writing $[J,\nabla]$ for the equivalence class of such pairs, a conformally
Fedosov manifold may then be defined as a pair $(M,[J,\nabla])$ such that
(\ref{mastereq}) holds (and one can check directly that (\ref{mastereq}) is 
invariant under (\ref{simultaneousrescaling}) if one decrees that 
$\alpha_a\mapsto\hat\alpha_a=\alpha_a+\Upsilon_a$).
For the rest of this article we shall adopt this alternative definition of a 
conformally Fedosov manifold. By analogy with ordinary conformal structures, 
we shall refer to the pair $[J,\nabla]$ as a {\em conformal class\/}. 

\begin{prop} Any conformally symplectic manifold $(M,[J])$ can be extended to 
a conformally Fedosov structure $(M,[J,\nabla])$.
\end{prop}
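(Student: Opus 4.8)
The plan is to produce, from a given conformally symplectic structure $(M,[J])$, a compatible projective class, equivalently (using the alternative definition just established) a torsion-free connection $\nabla_a$ satisfying the master equation (\ref{mastereq}) for a chosen representative $J_{ab}$. Since the whole structure is local in nature and both (\ref{mastereq}) and the equivalence (\ref{simultaneousrescaling}) are invariant, it suffices to work locally and then check that the resulting projective classes patch; but in fact I would exhibit a connection directly from any chosen representative, which automatically gives a well-defined conformal class $[J,\nabla]$.

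First I would fix a representative $J_{ab}$, with Lee form $\alpha_a$ determined by $dJ=2\alpha\wedge J$, i.e.\ $\nabla_{[a}J_{bc]}=2\alpha_{[a}J_{bc]}$ for any torsion-free $\nabla_a$. Starting from an arbitrary torsion-free connection $\nabla_a^0$, I would seek $\nabla_a = \nabla_a^0 + \Gamma_{ab}{}^c$ with $\Gamma_{ab}{}^c = \Gamma_{(ab)}{}^c$ (torsion-free) such that $\nabla_a J_{bc} = 2J_{a[b}\alpha_{c]}$. Writing out $\nabla_a J_{bc} = \nabla^0_a J_{bc} - \Gamma_{ab}{}^d J_{dc} - \Gamma_{ac}{}^d J_{bd}$, the requirement becomes a linear algebraic equation for $\Gamma_{ab}{}^c$: set $T_{abc} := \nabla^0_a J_{bc} - 2J_{a[b}\alpha_{c]}$ (where $\alpha_a = \frac{3}{2(n-2)}J^{bc}\nabla^0_{[a}J_{bc]}$), and solve $\Gamma_{ab}{}^d J_{dc} + \Gamma_{ac}{}^d J_{bd} = T_{abc}$ for symmetric $\Gamma_{ab}{}^c$. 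Lowering the last index with $J$, write $\Gamma_{abc} := \Gamma_{ab}{}^d J_{dc}$; then $\Gamma_{abc} = \Gamma_{(ab)c}$ and the equation reads $\Gamma_{abc} - \Gamma_{acb} = T_{abc}$, while $T_{abc}$ is automatically skew in $bc$ (since both $\nabla^0_a J_{bc}$ and $2J_{a[b}\alpha_{c]}$ are). The standard permutation trick — the same linear algebra that produces Christoffel symbols from a metric — gives the unique solution $\Gamma_{abc} = \frac12\big(T_{abc} + T_{bca} + T_{cba}\big)$ (up to the correct signs dictated by the $(ab)$-symmetry versus $[bc]$-skewness pattern), and one checks this is indeed symmetric in $ab$.

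The connection so constructed satisfies $\nabla_a J_{bc} = 2J_{a[b}\alpha_{c]}$ by construction; it only remains to verify that its projective class does not depend on the auxiliary choice of $\nabla^0_a$ and that, under $J_{ab}\mapsto\Omega^2 J_{ab}$, the connection changes by a transformation of the form (\ref{simultaneousrescaling}), so that the equivalence class $[J,\nabla]$ is well-defined — but both of these follow from the uniqueness clause in Proposition~\ref{making_the_mastereq} together with the already-noted invariance of (\ref{mastereq}) under (\ref{simultaneousrescaling}). Indeed, the uniqueness part of that proposition says the solution of (\ref{mastereq}) within a projective class is unique, so two solutions built from different data must be projectively related precisely when they are related by (\ref{simultaneousrescaling}), giving one and the same conformal class. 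The main obstacle I anticipate is purely bookkeeping: getting the sign conventions in the permutation formula right so that $\Gamma_{abc}$ comes out symmetric in its first two indices (this is where the skew symmetry of $J$, as opposed to the symmetry of a metric, flips the usual Koszul pattern), and confirming that the resulting $\Gamma_{ab}{}^c$ is genuinely torsion-free; once the linear algebra is pinned down, the invariance statements are immediate from the results already proved.
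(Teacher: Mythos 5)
Your overall strategy---correcting an arbitrary torsion-free connection $\nabla^0_a$ by a symmetric $\Gamma_{ab}{}^c$ so that (\ref{mastereq}) holds---is genuinely different from the paper's proof (which locally rescales $J_{ab}$ to a closed form, takes the flat connection in Darboux co\"ordinates, transports it back via (\ref{simultaneousrescaling}), and patches with a partition of unity, using that (\ref{mastereq}) is affine in $\nabla$ for fixed $J$ and $\alpha$), and it can be made to work, but as written it has a genuine gap at its central step. The linear problem ``find $\Gamma_{abc}=\Gamma_{(ab)c}$ with $\Gamma_{abc}-\Gamma_{acb}=T_{abc}$'' is \emph{not} solvable for an arbitrary $T_{abc}=T_{a[bc]}$: skew-symmetrising over all three indices annihilates the left-hand side (any tensor symmetric in two of its indices has vanishing totally skew part), so $T_{[abc]}=0$ is a necessary condition, and your argument never invokes it. Indeed it cannot be dispensed with: (\ref{mastereq}) implies (\ref{firsteq}) by skew-symmetrisation, so an argument producing $\nabla_a$ from an arbitrary non-degenerate $J_{ab}$ would prove too much. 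The hypothesis $dJ=2\alpha\wedge J$ enters exactly here: $T_{[abc]}=\nabla^0_{[a}J_{bc]}-2\alpha_{[a}J_{bc]}=0$, equivalently $T_{abc}-T_{bac}+T_{cab}=0$, and only granted this identity does a Koszul-type formula exist; for instance $\Gamma_{abc}=\frac13(T_{abc}+T_{bac})$ is manifestly symmetric in $ab$ and satisfies $\Gamma_{abc}-\Gamma_{acb}=T_{abc}$ precisely by virtue of that identity. Your candidate $\frac12(T_{abc}+T_{bca}+T_{cba})$ equals $-T_{cab}$ (using skewness in the last two indices together with the identity) and is not symmetric in $ab$, so it would introduce torsion; no adjustment of signs fixes this without bringing in $T_{[abc]}=0$ explicitly.

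Two smaller points. The solution is not unique: the kernel of $\Gamma_{abc}\mapsto\Gamma_{abc}-\Gamma_{acb}$ on tensors symmetric in $ab$ consists of all totally symmetric tensors, reflecting the familiar fact that symplectic connections are far from unique; fortunately uniqueness is not needed, since the proposition asks only for existence of one extension, and exhibiting a single pair $(J_{ab},\nabla_a)$ satisfying (\ref{mastereq}) already yields a conformally Fedosov structure, so the closing appeal to Proposition~\ref{making_the_mastereq} for well-definedness is superfluous. Also, your trace formula $\alpha_a=\frac{3}{2(n-2)}J^{bc}\nabla^0_{[a}J_{bc]}$ degenerates when $2n=4$; it is better simply to take the $1$-form $\alpha_a$ already furnished by the definition (\ref{def_alpha}), as the paper does, which is available in all dimensions $2n\geq4$.
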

\begin{proof} Pick a representative $2$-form~$J_{ab}$. We are required to find
a torsion-free connection $\nabla_a$ such that (\ref{mastereq}) is satisfied
for some $1$-form~$\alpha_a$. Recall that the $1$-form $\alpha_a$ is already
determined by (\ref{firsteq}) independent of choice of~$\nabla_a$. Locally, 
there is no problem in finding a suitable~$\nabla_a$: choose 
$\Omega$ such that $\hat J_{ab}=\Omega^2J_{ab}$ is closed and define 
$\nabla_a$ by (\ref{simultaneousrescaling}) where $\hat\nabla_a$ is the flat 
connection in Darboux co\"ordinates for $\hat J_{ab}$. We may use a 
partition of unity to patch these connections together.
\end{proof}

\begin{prop}\label{finite_type} Equation\/ {\rm(\ref{mastereq})} is 
equivalent to 
\begin{equation}\label{newmastereq}
\nabla_aJ^{bc}=2\alpha^{[b}\delta_a{}^{c]},\end{equation}
where $\alpha^b\equiv J^{bc}\alpha_c$.
\end{prop}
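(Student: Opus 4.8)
The plan is to differentiate the defining relation $J_{ab}J^{bc}=\delta_a{}^c$ and feed in the hypothesis $\nabla_aJ_{bc}=2J_{a[b}\alpha_{c]}$. Applying $\nabla_a$ to $J_{bd}J^{dc}=\delta_b{}^c$ and using the Leibniz rule gives $(\nabla_aJ_{bd})J^{dc}+J_{bd}(\nabla_aJ^{dc})=0$, so that $J_{bd}\nabla_aJ^{dc}=-(\nabla_aJ_{bd})J^{dc}$. Substituting (\ref{mastereq}) on the right hand side yields $J_{bd}\nabla_aJ^{dc}=-2J_{a[b}\alpha_{d]}J^{dc}$, and I would then expand the skew bracket and contract with $J^{be}$ to solve for $\nabla_aJ^{ec}$, arriving at (\ref{newmastereq}) after relabelling indices and using $J^{bd}\alpha_d=\alpha^b$ together with $J^{bd}J_{ad}=-\delta_a{}^b$ (care with the sign coming from the skew-symmetry of $J$).

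First I would record the raising-and-lowering conventions already fixed in the paper, since the crux of the computation is bookkeeping: $J_{ac}J^{bc}=\delta_a{}^b$ but $J^{bc}J_{ac}=-\delta_a{}^b$ because $J$ is skew, so one must be consistent about which slot is contracted. Then I would carry out the two-line differentiation above, being explicit about expanding $2J_{a[b}\alpha_{c]}=J_{ab}\alpha_c-J_{ac}\alpha_b$ and contracting to isolate $\nabla_aJ^{bc}$. The reverse implication is equally short: contracting (\ref{newmastereq}) with $J_{be}J_{cf}$ and again using the Leibniz rule on $\delta$'s recovers (\ref{mastereq}), so the two equations are genuinely equivalent rather than one merely implying the other.

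The main obstacle I anticipate is purely notational: keeping the skew-symmetrisation brackets, the Kronecker deltas, and the sign ambiguity in $J^{bc}J_{ac}=-\delta_a{}^b$ all straight simultaneously, so that the factors of $2$ and the placement of the skew bracket in $2\alpha^{[b}\delta_a{}^{c]}$ come out exactly right. There is no conceptual difficulty — no curvature, no integrability condition enters — it is simply the standard fact that a connection annihilating a tensor up to an explicit correction term also annihilates its inverse up to the correspondingly transported correction term. I would therefore present the forward direction in full (two displayed equations) and remark that the converse follows by the same manipulation run backwards.
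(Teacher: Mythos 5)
Your proposal takes essentially the same route as the paper: differentiate the inverse relation $J_{bc}J^{bd}=\delta_c{}^d$, substitute (\ref{mastereq}) via the Leibniz rule, and contract with the inverse to isolate $\nabla_aJ^{bc}$, with the converse following by running the same manipulation backwards, so it is correct in approach. One bookkeeping caution of the kind you yourself flag: with the paper's convention $J_{ac}J^{bc}=\delta_a{}^b$, the identity to differentiate is $J_{bd}J^{cd}=\delta_b{}^c$ (your $J_{bd}J^{dc}$ equals $-\delta_b{}^c$), and note that $J_{ac}J^{bc}$ and $J^{bc}J_{ac}$ denote the same contraction — the sign only flips when opposite slots are contracted — but these slips do not affect the argument once the signs are tracked consistently.
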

\begin{proof} Recall that $J_{bc}J^{bd}=\delta_c{}^d$. Differentiating this and 
substituting from (\ref{mastereq}) gives
$$0=J_{bc}\nabla_aJ^{bd}+2J_{a[b}\alpha_{c]}J^{bd}=
J_{bc}\nabla_aJ^{bd}-\delta_a{}^d\alpha_c+J_{ac}\alpha^d.$$
Therefore, 
$$\nabla_aJ^{bc}=J^{be}J_{de}\nabla_aJ^{dc}=
J^{be}\left(\delta_a{}^c\alpha_e-J_{ae}\alpha^c\right)=
\delta_a{}^c\alpha^b-\delta_a{}^b\alpha^c,$$
as required.
\end{proof}
\begin{cor} A projective structure $[\nabla]$ cannot necessarily be extended 
to a conformally Fedosov structure. 
\end{cor}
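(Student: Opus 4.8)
The plan is to reformulate the extension problem as a linear partial differential equation and then to exhibit a projective manifold on which that equation has no non-degenerate solution. Suppose $(M,[\nabla])$ admitted a conformally Fedosov extension. Choosing a representative $2$-form and the connection supplied by Proposition~\ref{making_the_mastereq}, we would obtain a non-degenerate $2$-vector $J^{bc}$ which, together with the vector field $\alpha^b\equiv J^{bc}\alpha_c$, satisfies the finite-type equation~(\ref{newmastereq}),
$$\nabla_aJ^{bc}=2\alpha^{[b}\delta_a{}^{c]};$$
conversely, by Proposition~\ref{finite_type} any non-degenerate solution $J^{bc}$ of~(\ref{newmastereq}) determines such an extension. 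So it suffices to produce a single projective manifold on which~(\ref{newmastereq}) has no non-degenerate solution.

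The quickest way to do this is topological. The round metric on $S^4$ has constant curvature, hence is projectively flat; on the other hand $S^4$ admits no almost complex structure and therefore no non-degenerate $2$-form at all. Thus the projective structure underlying the round $S^4$ has no conformally Fedosov extension. Note that the word `necessarily' in the statement is essential here: locally a flat projective structure always extends, as we saw when showing that every conformally symplectic manifold underlies a conformally Fedosov one.

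One can also obtain a purely local obstruction straight from the finite-type nature of~(\ref{newmastereq}). Differentiating~(\ref{newmastereq}) once more and taking a trace expresses $\nabla_a\alpha^b$ algebraically in terms of $J^{bc}$, $\alpha^b$ and the curvature of $\nabla_a$; substituting this back into the skew part of the twice-differentiated equation yields an algebraic integrability condition, schematically $(\text{projective curvature})\bullet(J^{bc},\alpha^b)=0$, a pointwise linear constraint on the pair $(J^{bc},\alpha^b)$. For a projective structure whose curvature places the resulting linear map in general position at some point, the only solution near that point is $(J^{bc},\alpha^b)=0$, so $J^{bc}$ is degenerate there; such non-flat projective structures exist in every dimension $2n\ge4$. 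The hard part of this second approach is to make `general position' precise — either by writing down an explicit non-flat projective connection and computing its Weyl and Cotton curvatures, or by a transversality argument over the space of jets of projective connections — and it is exactly this routine but unenlightening computation that the topological argument above circumvents.
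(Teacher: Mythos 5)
Your topological argument is correct and establishes the literal statement, but it is a genuinely different route from the paper's. The paper stays local and differential-geometric: it observes that (\ref{newmastereq}) is equivalent to the vanishing of the trace-free part of $\nabla_aJ^{bc}$, a first-order system of \emph{finite type}, and then invokes the general theory of such systems (the reference \cite{BCEG}) to conclude that curvature obstructions exist, so that a generic projective structure admits no solution even on a small neighbourhood --- the point being precisely the interplay between $[\nabla]$ and $J$, with (\ref{newmastereq}) as the first step of the prolongation. Your $S^4$ example instead exploits a global obstruction that has nothing to do with the projective structure: since a non-degenerate $2$-form on a $4$-manifold is equivalent to an almost complex structure and $S^4$ admits none, \emph{no} projective structure on $S^4$ extends --- indeed $S^4$ carries no conformally symplectic (even almost symplectic) structure at all. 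This buys you a short, completely rigorous proof of the corollary as stated, at the cost of proving a weaker and somewhat orthogonal fact; in particular it is consistent with every projective structure being locally extendible, which is exactly what the paper's finite-type argument rules out. Your second, local approach is essentially the paper's, and your own caveat is accurate: as sketched it is incomplete, since you would need to exhibit (or argue generically for) a projective structure whose curvature makes the derived algebraic constraint force $J^{bc}$ to degenerate, which is the burden the paper discharges by citing the prolongation machinery of \cite{BCEG}. Also note that for the topological argument the first paragraph's equivalence is not needed at all: the existence of a representative non-degenerate $2$-form is already part of the definition of a conformally Fedosov (indeed conformally symplectic) structure, so you could state the example in two lines.
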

\begin{proof}
That equation (\ref{newmastereq}) hold for some vector field $\alpha^a$ is 
equivalent to requiring that 
$$\mbox{the trace-free part of }(\nabla_aJ^{bc})=0,$$
which is a system of finite type as explained in~\cite{BCEG}. Hence, there are
obstructions to its solution (and writing it as (\ref{newmastereq}) is the
first step in its prolongation).
\end{proof}

\section{Curvature}
For any torsion-free affine connection~$\nabla_a$, the curvature
$R_{ab}{}^c{}_d$ of $\nabla_a$ is characterised by the equation
\begin{equation}\label{def_of_curvature}
(\nabla_a\nabla_b-\nabla_b\nabla_a)X^c=R_{ab}{}^c{}_dX^d.\end{equation}
Recall that we are free to `lower an index' and write the curvature as 
$R_{abcd}$ in the presence of a non-degenerate $2$-form~$J_{ab}$.

\begin{thm}\label{curvature_decomposition}
Choosing any representative $2$-form $J_{ab}$ and connection
$\nabla_a$ of a conformally Fedosov manifold\/ $(M,[J,\nabla])$, the curvature
$R_{ab}{}^c{}_d$ of\/ $\nabla_a$ may be uniquely written as
$$R_{ab}{}^c{}_d=W_{ab}{}^c{}_d+\delta_a{}^c\Rho_{bd}-\delta_b{}^c\Rho_{ad},$$
where $\Rho_{ab}$ is a symmetric tensor and $W_{ab}{}^c{}_d$ satisfies
\begin{equation}\label{ttf}W_{ab}{}^c{}_d=W_{[ab]}{}^c{}_d\qquad
W_{[ab}{}^c{}_{d]}=0\qquad W_{ab}{}^a{}_d=0.\end{equation}
Under conformal rescaling {\rm(\ref{simultaneousrescaling})}, the tensor 
$W_{ab}{}^c{}_d$ is unchanged whilst
$$\hat\Rho_{ab}=\Rho_{ab}-\nabla_a\Upsilon_b+\Upsilon_a\Upsilon_b.$$
Furthermore, the tensor $W_{abcd}$ may be uniquely decomposed as 
$$\textstyle W_{abcd}=
V_{abcd}-\frac{3}{2n-1}J_{ac}\Phi_{bd}+\frac{3}{2n-1}J_{bc}\Phi_{ad}+
J_{ad}\Phi_{bc}-J_{bd}\Phi_{ac}+2J_{ab}\Phi_{cd},$$
where 
\begin{equation}\label{whatVsatisfies}
V_{abcd}=V_{[ab](cd)}\qquad V_{[abc]d}=0\qquad J^{ab}V_{abcd}=0\end{equation}
and\/ $\Phi_{ab}$ is symmetric. 
\end{thm}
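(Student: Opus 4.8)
The plan is to carry out the decomposition in two stages, exactly mirroring the shape of the statement. The first stage is a purely affine matter: for any torsion-free connection $\nabla_a$ one writes $R_{ab}{}^c{}_d=W_{ab}{}^c{}_d+\delta_a{}^c\Rho_{bd}-\delta_b{}^c\Rho_{ad}$ by defining $\Rho_{bd}$ from the Ricci contraction so that $W_{ab}{}^c{}_d$ is totally trace-free, i.e.\ satisfies (\ref{ttf}). This is the standard projective Weyl/Schouten split; one has to check that the symmetry of $\Rho_{ab}$ follows from the first Bianchi identity $R_{[ab}{}^c{}_{d]}=0$ together with tracelessness, which is a short index computation. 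The conformal transformation law for $\Rho_{ab}$ under (\ref{simultaneousrescaling}) is then just the usual computation of how the curvature of $\nabla_a$ changes under the projective change $\nabla_a\mapsto\hat\nabla_a$ with $\nu_a=\Upsilon_a$: the $\delta$-terms in $\Delta R_{ab}{}^c{}_d$ get absorbed into $\hat\Rho_{ab}=\Rho_{ab}-\nabla_a\Upsilon_b+\Upsilon_a\Upsilon_b$, and the totally trace-free part $W_{ab}{}^c{}_d$ is left invariant because there is nowhere for a correction term with the right symmetries to live.

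The second stage is where the $2$-form $J_{ab}$ enters. Having lowered the index to get $W_{abcd}$, I note that a priori $W_{abcd}$ is skew in $[ab]$ and satisfies the Bianchi identity $W_{[abc]d}=0$ and the trace condition $W_{ab}{}^a{}_d=0$, but it has no symmetry in $[cd]$ — unlike in the Riemannian case. The key extra input is (\ref{mastereq}) (equivalently (\ref{newmastereq})): differentiating $\nabla_aJ^{bc}=\delta_a{}^c\alpha^b-\delta_a{}^b\alpha^c$ once more and skewing over $[ab]$, the left-hand side becomes $R_{ab}{}^b{}_e J^{ec}+R_{ab}{}^c{}_e J^{be}$ via (\ref{def_of_curvature}) applied to the two slots of $J^{bc}$, and the right-hand side becomes $\delta$-terms built from $\nabla_{[a}\alpha_{b]}$ and $\alpha_a$; using $\nabla_{[a}\alpha_{b]}=0$ this gives a Ricci-type identity forcing the symmetric part $W_{(cd)}$-type contractions to be controlled. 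Concretely this shows that the failure of $W_{abcd}$ to be symmetric in $[cd]$ — that is, the part of $W_{abcd}$ that pairs nontrivially with $J^{cd}$ — is governed by a single symmetric tensor. I will make this precise by contracting $W_{abcd}$ against $J^{cd}$, $J^{bd}$, $J^{bc}$ etc.\ to produce a tensor that I will call $\Phi_{ab}$ (symmetric), and then subtract off from $W_{abcd}$ the universal combination of $J$'s and $\Phi$'s displayed in the statement; the residue $V_{abcd}$ is then defined to be what is left.

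What remains is bookkeeping: one must verify (\ref{whatVsatisfies}), namely that $V_{abcd}$ inherits skew-symmetry in $[ab]$, is symmetric in $(cd)$, satisfies the Bianchi identity $V_{[abc]d}=0$, and is $J$-trace-free, $J^{ab}V_{abcd}=0$ — and also $J^{bd}V_{abcd}=0$ and $J^{cd}V_{abcd}=0$ will drop out from the construction. The coefficients $-\tfrac{3}{2n-1}$, $+1$, $+2$ etc.\ in the displayed formula are precisely those needed to kill all the $J$-traces of $W_{abcd}$ simultaneously; I would fix them by imposing the trace conditions one at a time and solving the resulting small linear system (this is where the denominator $2n-1$ appears, and one should check $2n-1\neq0$, which holds since $2n\geq4$). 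Uniqueness of both decompositions follows because in each case the correction terms are built from $\delta$'s and $J$'s applied to lower-rank tensors, and the trace conditions pin those lower-rank tensors down uniquely. The main obstacle I anticipate is not any single step but the combinatorics of the second stage: there are several independent $J$-traces of a tensor with the symmetry type of $W_{abcd}$, they are linearly related through the Bianchi identity, and getting the multiplicities right — equivalently, confirming that the six-term $J$-expression really does span the complement of the $V$-part — requires careful tracking of which contractions are independent. Representation theory of $\mathrm{Sp}(2n)$ (decomposing the relevant tensor space into irreducibles) would give a clean a priori count of how many $\Phi$-type pieces to expect and thereby guide the explicit computation.
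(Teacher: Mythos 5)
There is a genuine gap at your first stage, precisely where you declare it to be ``a purely affine matter.'' For an arbitrary torsion-free connection the decomposition
$R_{ab}{}^c{}_d=W_{ab}{}^c{}_d+\delta_a{}^c\Rho_{bd}-\delta_b{}^c\Rho_{ad}$ with $W$ satisfying (\ref{ttf}) \emph{and} $\Rho_{ab}$ symmetric simply does not exist: tracing the proposed decomposition over the upper index and the final lower index (and noting that (\ref{ttf}) forces $W_{ab}{}^c{}_c=0$) gives $R_{ab}{}^c{}_c=-2\Rho_{[ab]}$, while the first Bianchi identity only converts this into the statement that the skew part of the Ricci contraction equals $-\frac12R_{ab}{}^c{}_c$. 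The trace $R_{ab}{}^c{}_c$ is the curvature of the induced connection on $\Lambda^{2n}$ and is generally nonzero for a torsion-free connection; the general affine/projective split has an extra term $\beta_{ab}\delta^c{}_d$ with $\beta_{ab}=-2\Rho_{[ab]}$, as in the paper's display (\ref{projectivedecomposition}). So the symmetry of $\Rho_{ab}$ is \emph{not} ``a short index computation'' from Bianchi plus tracelessness; it is exactly the point at which the conformally Fedosov hypothesis must be used. Concretely, one needs $R_{ab}{}^c{}_c=0$, which follows either by contracting the once-differentiated form of (\ref{newmastereq}) with $J_{cd}$ and invoking closedness of $\alpha_a$, or, as the paper does, by passing to a local Fedosov gauge $\nabla_aJ_{bc}=0$, where $J^{cd}R_{abcd}=0$ kills $\beta_{ab}$, and then transporting the conclusion to a general representative via the conformal change (\ref{simultaneousrescaling}) (which preserves $\beta_{ab}=0$ because $\Upsilon_a$ is closed). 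As written, your stage 1 never touches $J$ or $\alpha$, so it proves a false statement and the error propagates: without $\beta_{ab}=0$ your $W$ is not the tensor the theorem decomposes.

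Your second stage is essentially sound in outline and is close in spirit to the paper: differentiating (\ref{newmastereq}) and skew-symmetrising is indeed the gauge-free substitute for the paper's observation that in Fedosov gauge $R_{abcd}=R_{[ab](cd)}$, $R_{[abc]d}=0$, after which the single symmetric $\Phi_{ab}$ and the coefficients (including the $\frac{3}{2n-1}$) come out of the ${\mathrm{Sp}}(2n,{\mathbb{R}})$ branching (\ref{branched}) together with the trace comparison $(2n-1)\Rho_{ad}=2(n+1)\Phi_{ad}$; the paper then gets uniqueness and the transformation laws $\hat W_{ab}{}^c{}_d=W_{ab}{}^c{}_d$, $\hat\Phi_{ab}=\Phi_{ab}$, $\hat\Rho_{ab}=\Rho_{ab}-\nabla_a\Upsilon_b+\Upsilon_a\Upsilon_b$ for free from closedness of $\Upsilon_a$. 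If you repair stage 1 as above (ideally by adopting the same Fedosov-gauge-plus-conformal-invariance device, which simultaneously cleans up the $\alpha$-dependent clutter you anticipate in stage 2), your plan becomes a viable proof; but the claim that the existence of a symmetric $\Rho_{ab}$ is automatic for every torsion-free connection must go.
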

\begin{proof}
The curvature of any torsion-free connection may be uniquely and conveniently 
written as
\begin{equation}\label{projectivedecomposition}
R_{ab}{}^c{}_d=
W_{ab}{}^c{}_d+2\delta_{[a}{}^c\Rho_{b]d}+\beta_{ab}\delta^c{}_d\end{equation}
where $W_{ab}{}^c{}_d$ satisfies (\ref{ttf}) and $\beta_{ab}=-2\Rho_{[ab]}$. 
Let us suppose, for the moment, that $\nabla_aJ_{bc}=0$. Then, together with 
the Bianchi identity, we have
\begin{equation}\label{fedosovsymmetries}
R_{abcd}=R_{[ab](cd)}\quad\mbox{and}\quad R_{[abc]d}=0,\end{equation}
corresponding to an irreducible representation of 
${\mathrm{SL}}(2n,{\mathbb{R}})$. Branching this representation under 
${\mathrm{Sp}}(2n,{\mathbb{R}})\subset{\mathrm{SL}}(2n,{\mathbb{R}})$ gives
\begin{equation}\label{branched}R_{abcd}=V_{abcd}
+J_{ac}\Phi_{bd}-J_{bc}\Phi_{ad}+J_{ad}\Phi_{bc}-J_{bd}\Phi_{ac}
+2J_{ab}\Phi_{cd},\end{equation}
where $V_{abcd}$ satisfies (\ref{whatVsatisfies}) and $\Phi_{ab}$ is symmetric.
{From} (\ref{projectivedecomposition}) we see that
$$J^{cd}R_{abcd}=
J^{cd}\big(W_{abcd}+J_{ac}\Rho_{bd}-J_{bc}\Rho_{ad}-\beta_{ab}J_{cd}\big)=
-(2n+1)\beta_{ab}$$
whereas (\ref{fedosovsymmetries}) implies that $J^{cd}R_{abcd}$ should vanish. 
Therefore $\beta_{ab}=0$ and consequently $\Rho_{ab}$ is symmetric.
Thus, we have 
\begin{equation}\label{two_decompositions}
\begin{array}{rcl}R_{abcd}&\!=\!&W_{abcd}+J_{ac}\Rho_{bd}-J_{bc}\Rho_{ad}\\[3pt]
&\!=\!&V_{abcd}
+J_{ac}\Phi_{bd}-J_{bc}\Phi_{ad}+J_{ad}\Phi_{bc}-J_{bd}\Phi_{ac}
+2J_{ab}\Phi_{cd}\end{array}\end{equation}
from (\ref{projectivedecomposition}) and (\ref{branched}), respectively. Now
computing $J^{bc}R_{abcd}$ from each of these two decompositions gives
$(2n-1)\Rho_{ad}=2(n+1)\Phi_{ad}$. Substituting back and rearranging the result
gives the decomposition of $W_{abcd}$ as in the statement of the theorem.

This was all under the assumption that $\nabla_aJ_{bc}=0$ and locally, there is
always a connection $\nabla_a$ and $2$-form $J_{ab}$ in the conformal class
$[J,\nabla]$ for which this assumption is valid. In general, we must see how
our conclusions are affected by a conformal
change~(\ref{simultaneousrescaling}). The decomposition
(\ref{projectivedecomposition}) is familiar from projective differential
geometry~\cite{E1} and, since (\ref{simultaneousrescaling}) is controlled by a
{\em closed\/} $1$-form~$\Upsilon_a$, we have $\hat\beta_{ab}=0$ whilst 
$$\hat W_{ab}{}^c{}_d=W_{ab}{}^c{}_d\quad\mbox{and}\quad
\hat\Rho_{ab}=\Rho_{ab}-\nabla_a\Upsilon_b+\Upsilon_a\Upsilon_b.$$
Finally, having a lowered index, we see that $\hat W_{abcd}=\Omega^2W_{abcd}$
and the algebraic decomposition of $W_{abcd}$ given in the statement of the 
theorem remains valid with 
$$\hat V_{abcd}=\Omega^2 V_{abcd}\quad\mbox{and}\quad
\hat\Phi_{ab}=\Phi_{ab}.$$
This completes the proof.
\end{proof}
On a conformally Fedosov manifold, although $J_{ab}$ is only defined up to
scale, the local stipulation that $\nabla_aJ_{bc}=0$ for some torsion-free
connection $\nabla_a$ in the projective class characterises a globally defined
affine connection whose curvature decomposes as~(\ref{branched}) (also
depending only on $J_{ab}$ up to scale). More generally, the proof
of Theorem~\ref{curvature_decomposition} decomposes the curvature into three
${\mathrm{Sp}}(2n,{\mathbb{R}})$-irreducible parts,
$$V_{abcd}\in
\begin{picture}(72,5)
\put(4,1.5){\line(1,0){30}}
\put(4,1.2){\makebox(0,0){$\bullet$}}
\put(16,1.2){\makebox(0,0){$\bullet$}}
\put(28,1.2){\makebox(0,0){$\bullet$}}
\put(43,1.2){\makebox(0,0){$\cdots$}}
\put(50,1.5){\line(1,0){6}}
\put(56,1.2){\makebox(0,0){$\bullet$}}
\put(56,0.5){\line(1,0){12}}
\put(56,2.5){\line(1,0){12}}
\put(62,1.5){\makebox(0,0){$\langle$}}
\put(68,1.2){\makebox(0,0){$\bullet$}}
\put(4,8){\makebox(0,0){$\scriptstyle 2$}}
\put(16,8){\makebox(0,0){$\scriptstyle 1$}}
\put(28,8){\makebox(0,0){$\scriptstyle 0$}}
\put(56,8){\makebox(0,0){$\scriptstyle 0$}}
\put(68,8){\makebox(0,0){$\scriptstyle 0$}}
\end{picture}
\quad\enskip\Phi_{ab}\in
\begin{picture}(72,5)
\put(4,1.5){\line(1,0){30}}
\put(4,1.2){\makebox(0,0){$\bullet$}}
\put(16,1.2){\makebox(0,0){$\bullet$}}
\put(28,1.2){\makebox(0,0){$\bullet$}}
\put(43,1.2){\makebox(0,0){$\cdots$}}
\put(50,1.5){\line(1,0){6}}
\put(56,1.2){\makebox(0,0){$\bullet$}}
\put(56,0.5){\line(1,0){12}}
\put(56,2.5){\line(1,0){12}}
\put(62,1.5){\makebox(0,0){$\langle$}}
\put(68,1.2){\makebox(0,0){$\bullet$}}
\put(4,8){\makebox(0,0){$\scriptstyle 2$}}
\put(16,8){\makebox(0,0){$\scriptstyle 0$}}
\put(28,8){\makebox(0,0){$\scriptstyle 0$}}
\put(56,8){\makebox(0,0){$\scriptstyle 0$}}
\put(68,8){\makebox(0,0){$\scriptstyle 0$}}
\end{picture}
\quad\enskip\Rho_{ab}\in
\begin{picture}(72,5)
\put(4,1.5){\line(1,0){30}}
\put(4,1.2){\makebox(0,0){$\bullet$}}
\put(16,1.2){\makebox(0,0){$\bullet$}}
\put(28,1.2){\makebox(0,0){$\bullet$}}
\put(43,1.2){\makebox(0,0){$\cdots$}}
\put(50,1.5){\line(1,0){6}}
\put(56,1.2){\makebox(0,0){$\bullet$}}
\put(56,0.5){\line(1,0){12}}
\put(56,2.5){\line(1,0){12}}
\put(62,1.5){\makebox(0,0){$\langle$}}
\put(68,1.2){\makebox(0,0){$\bullet$}}
\put(4,8){\makebox(0,0){$\scriptstyle 2$}}
\put(16,8){\makebox(0,0){$\scriptstyle 0$}}
\put(28,8){\makebox(0,0){$\scriptstyle 0$}}
\put(56,8){\makebox(0,0){$\scriptstyle 0$}}
\put(68,8){\makebox(0,0){$\scriptstyle 0$}}
\end{picture}$$
according to
\begin{equation}\label{fullcurvaturedecomposition}
\textstyle R_{abcd}=V_{abcd}
+2J_{ab}\Phi_{cd}-2\Phi_{c[a}J_{b]d}+\frac{6}{2n-1}J_{c[a}\Phi_{b]d}
-2J_{c[a}\Rho_{b]d}\end{equation}
and under conformal change~(\ref{simultaneousrescaling}), we have
$$\hat V_{abcd}=\Omega^2V_{abcd}\qquad
\hat\Phi_{ab}=\Phi_{ab}\qquad
\hat\Rho_{ab}=\Rho_{ab}-\nabla_a\Upsilon_b+\Upsilon_a\Upsilon_b.$$
It is easy to give explicit formul{\ae} for these parts, viz.:--
$$\textstyle\Rho_{bd}=\mbox{\large$\frac{1}{2n-1}$}J^{ac}R_{abcd}\qquad
\Phi_{cd}=\mbox{\large$\frac{2n-1}{8(n+1)(n-1)}$}
\left(J^{ab}R_{abcd}-2\Rho_{cd}\right)$$
and $V_{abcd}$ is then determined by~(\ref{fullcurvaturedecomposition}). As an 
example, the curvature of ${\mathbb{CP}}_n$ with its standard Fubini-Study 
metric is given by 
$$R_{abcd}=g_{bd}J_{ac}-g_{ad}J_{bc}-g_{ac}J_{bd}+g_{bc}J_{ad}+2J_{ab}g_{cd}$$
and one easily computes that 
$$\Rho_{ab}=\mbox{\large$\frac{2(n+1)}{2n-1}$}g_{ab}\qquad\Phi_{ab}=g_{ab}
\qquad V_{abcd}=0.$$

As in the proof of Theorem~\ref{curvature_decomposition}, it is often
convenient locally to work in a gauge in which $\alpha_a=0$ for then
$\nabla_aJ_{bc}=0$ and the curvature $R_{abcd}$ decomposes according 
to~(\ref{branched}). Also recall from (\ref{two_decompositions}) that 
\begin{equation}\label{Rho_v_Phi}(2n-1)\Rho_{ab}=2(n+1)\Phi_{ab}.\end{equation}
We shall refer to a choice of pair $(J_{ab},\nabla_a)$ from a conformally
Fedosov structure $[J_{ab},\nabla_a]$ for which $\nabla_aJ_{bc}=0$ as a {\em 
Fedosov gauge\/}. This is in accordance with the notion of Fedosov 
manifold~\cite{GRS}. We pause here to examine some consequences of the Bianchi 
identity $\nabla_{[e}R_{ab]cd}=0$. {From} (\ref{branched}) we conclude that
$$\begin{array}{rcl}0&=&3J^{de}\nabla_{[e}R_{ab]cd}\\
&=&\nabla^dV_{abcd}+J_{ac}\nabla^d\Phi_{bd}-J_{bc}\nabla^d\Phi_{ad}
+\nabla_a\Phi_{bc}-\nabla_b\Phi_{ac}\\
&&\quad{}-\nabla_a\Phi_{bc}-2n\nabla_b\Phi_{ac}-\nabla_a\Phi_{bc}
+\nabla_b\Phi_{ac}+2n\nabla_a\Phi_{bc}+\nabla_b\Phi_{ac}\\
&&\qquad{}+2J_{ab}\nabla^d\Phi_{cd}+2\nabla_a\Phi_{bc}-2\nabla_b\Phi_{ac}\\
&=&\nabla^dV_{abcd}+J_{ac}\nabla^d\Phi_{bd}-J_{bc}\nabla^d\Phi_{ad}
+2J_{ab}\nabla^d\Phi_{cd}\\
&&\quad{}+(2n+1)\nabla_a\Phi_{bc}-(2n+1)\nabla_b\Phi_{ac}.
\end{array}$$
This suggests that one introduce the tensor
$$\textstyle Y_{abc}\equiv
\nabla_a\Phi_{bc}-\nabla_b\Phi_{ac}
+\frac{1}{2n+1}\big(J_{ac}\nabla^d\Phi_{bd}-J_{bc}\nabla^d\Phi_{ad}
+2J_{ab}\nabla^d\Phi_{cd}\big),$$
noting that
$$Y_{abc}=Y_{[ab]c}\qquad Y_{[abc]}=0\qquad J^{ab}Y_{abc}=0.$$
We have established the contracted Bianchi identity
\begin{equation}\label{contractedBianchi}
\nabla^dV_{abcd}+(2n+1)Y_{abc}=0.\end{equation}
For later use, it is convenient to introduce the tensor 
$S_a\equiv\frac{1}{2n+1}\nabla^b\Phi_{ab}$ so that
\begin{equation}\label{CottonYork}Y_{abc}=\nabla_a\Phi_{bc}-\nabla_b\Phi_{ac}
+J_{ac}S_b-J_{bc}S_a+2J_{ab}S_c.\end{equation}

\section{The tractor connection in conformal geometry}
Here we review the construction of the conformal tractor bundle and its
connection following the conventions of~\cite{BEG,E1}. We omit all details. The
purpose of this section is to establish notation and to motivate the
corresponding construction in the conformally Fedosov setting.

Firstly, we recall that the bundle $\Lambda^0[w]$ of {\em conformal densities
of weight\/} $w$ is defined as the trivial bundle $\Lambda^0$ in the presence
of a chosen metric $g_{ab}$ in the conformal class $[g_{ab}]$. Its smooth
sections may then be identified as smooth functions but if a different metric
is chosen, say $\hat{g}_{ab}=\Omega^2g_{ab}$, then the corresponding functions
are obliged to change according to $\hat\sigma=\Omega^w\sigma$. A similar
notion applies to tensors and tensor bundles. In particular, a $1$-form of
weight $w$ transforms according to $\hat\omega_a=\Omega^w\omega_a$ when
$g_{ab}$ is replaced by $\hat{g}_{ab}=\Omega^2g_{ab}$ and the corresponding 
bundle is denoted~$\Lambda^1[w]$. Tautologically, the
conformal metric itself is a symmetric covariant $2$-form of conformal
weight~$2$. The {\em standard tractor bundle\/} ${\mathbb{T}}$ is defined in
the presence of a chosen metric $g_{ab}$ to be the direct sum 
$${\mathbb{T}}=\Lambda^0[1]\oplus\Lambda^1[1]\oplus\Lambda^0[-1]$$
but if the metric is rescaled as $\hat{g}_{ab}=\Omega^2g_{ab}$, then this 
decomposition is mandated to change according to
$$\left[\begin{array}c\hat\sigma\\ \hat\mu_b\\ \hat\rho\end{array}\right]=
\left[\begin{array}c\sigma\\ \mu_b+\Upsilon_b\sigma\\ 
\rho-\Upsilon^b\mu_b-\frac12\Upsilon^b\Upsilon_b\sigma\end{array}\right],
\enskip\mbox{where }\Upsilon_a\equiv\nabla_a\log\Omega.$$
For a chosen metric $g_{ab}$ in the conformal class, the 
{\em tractor connection\/} is defined by 
$$\nabla_a\left[\begin{array}c\sigma\\ \mu_b\\
\rho\end{array}\right]= \left[\begin{array}c\nabla_a\sigma-\mu_a\\
\nabla_a\mu_b+g_{ab}\rho+\Rho_{ab}\sigma\\ 
\nabla_a\rho-\Rho_a{}^b\mu_b\end{array}\right],$$
where $\nabla_a\mu_b$ is the Levi-Civita connection of~$g_{ab}$. One checks 
that this definition is conformally invariant. As detailed in~\cite{BEG}, this 
construction is essentially due to T.Y.~Thomas~\cite{T} and is equivalent to 
the Cartan connection~\cite{C} constructed three years earlier.

\section{A conformally Fedosov tractor connection}\label{tractors}
Firstly, we shall build a {\em tractor bundle\/} on a conformally Fedosov
manifold, a vector bundle which we shall then endow with a canonically defined
connection. As usual, given a conformally Fedosov manifold $(M,[J,\nabla])$,
definitions will be given in terms of chosen representatives $J_{ab}$ and
$\nabla_a$ and then we shall check that these definitions respect the allowed
freedom~(\ref{simultaneousrescaling}). Firstly, we define the bundle
$\Lambda^0[w]$ of {\em conformal densities of weight\/} $w$ as the trivial
bundle in the presence of chosen representatives $(J_{ab},\nabla_a)$ but, under
the allowed replacements (\ref{simultaneousrescaling}), its sections regarded
as functions are decreed to change by $\hat\sigma=\Omega^w\sigma$.

For chosen representatives, the vector
bundle ${\mathbb{T}}$ is defined as
$${\mathbb{T}}=\Lambda^0[1]\oplus\Lambda^1[1]\oplus\Lambda^0[-1]$$
but this splitting is decreed to change as
\begin{equation}\label{changeofsplitting}
\left[\begin{array}c\hat\sigma\\ \hat\mu_b\\ \hat\rho\end{array}\right]=
\left[\begin{array}c\sigma\\ \mu_b+\Upsilon_b\sigma\\ 
\rho-\Upsilon^b\mu_b+\Upsilon^b\alpha_b\sigma\end{array}\right]\end{equation}
under~(\ref{simultaneousrescaling}), where $\alpha_a$ is defined 
by~(\ref{mastereq}). We may check this decree is self-consistent as follows.
$$\begin{array}{ccl}\hat{\hat\sigma}&=&\hat\sigma\;=\;\sigma\,,\\ 
\hat{\hat\mu}_b&=&\hat\mu_b+\hat\Upsilon_b\hat\sigma\;=\;
\mu_b+\Upsilon_b\sigma+\hat\Upsilon_b\sigma\;=\;
\mu_b+(\Upsilon_b+\hat\Upsilon_b)\sigma\,,\\
\hat{\hat\rho}&=&\hat\rho-\hat\Upsilon^b\hat\mu_b+
\hat\Upsilon^b\hat\alpha_b\hat\sigma\\
&=&\enskip\rho-\Upsilon^b\mu_b+\Upsilon^b\alpha_b\sigma
-\hat\Upsilon^b(\mu_b+\Upsilon_b\sigma)+
\hat\Upsilon^b(\alpha_b+\Upsilon_b)\sigma\\
&=&\quad\rho-(\Upsilon^b+\hat\Upsilon_b)\mu_b+
(\Upsilon^b+\hat\Upsilon^b)\alpha_b\sigma\,.
\end{array}$$

There is a non-degenerate skew form defined on ${\mathbb{T}}$ by
\begin{equation}\label{bigJ}\left\langle
\left[\begin{array}c\sigma\\ \mu_b\\ \rho\end{array}\right],
\left[\begin{array}c\tilde\sigma\\ \tilde\mu_c\\ \tilde\rho\end{array}\right]
\right\rangle
=\sigma\tilde\rho-J^{bc}\mu_b\tilde\mu_c-\rho\tilde\sigma
=\sigma\tilde\rho+\mu^b\tilde\mu_b-\rho\tilde\sigma,\end{equation}
(which one readily checks is preserved by~(\ref{changeofsplitting})). 

Although not yet the tractor connection, consider the connection $D_a$ on
${\mathbb{T}}$ defined by
\begin{equation}\label{fedosov_proto_tractors}
D_a\left[\begin{array}c\sigma\\ \mu_b\\ \rho\end{array}\right]=
\left[\begin{array}c\nabla_a\sigma-\mu_a\\ 
\nabla_a\mu_b-J_{ab}\rho+\Rho_{ab}\sigma-J_{ab}\alpha^c\mu_c\\ 
\nabla_a\rho-\Rho_a{}^b\mu_b
-(2\alpha^b\Rho_{ab}+\alpha^b\nabla_a\alpha_b)\sigma
\end{array}\right].\end{equation}
\begin{prop}\label{well-defined}
The connection\/ {\rm(\ref{fedosov_proto_tractors})} is
well-defined, i.e.~is independent of choice of representatives
$(J_{ab},\nabla_a)$, and preserves the skew form\/~{\rm(\ref{bigJ})}.
\end{prop}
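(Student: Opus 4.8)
The plan is to verify the two assertions separately: first that $D_a$ is independent of the choice of representatives $(J_{ab},\nabla_a)$, and second that it preserves the form $\langle\ ,\ \rangle$ of~(\ref{bigJ}).

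For the well-definedness, I would apply the simultaneous rescaling~(\ref{simultaneousrescaling}), so $\hat J_{ab}=\Omega^2J_{ab}$ and $\hat\nabla_a\phi_b=\nabla_a\phi_b-\Upsilon_a\phi_b-\Upsilon_b\phi_a$ with $\Upsilon_a=\nabla_a\log\Omega$, which also forces $\hat\alpha_a=\alpha_a+\Upsilon_a$ (as recorded after~(\ref{mastereq})) and $\hat\Rho_{ab}=\Rho_{ab}-\nabla_a\Upsilon_b+\Upsilon_a\Upsilon_b$ (from Theorem~\ref{curvature_decomposition}; note $\Upsilon_a$ is closed so the formula is symmetric and no $\beta_{ab}$ appears). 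The task is then to compute $\hat D_a[\hat\sigma,\hat\mu_b,\hat\rho]^{\mathrm t}$ using the hatted data, substitute the transformation~(\ref{changeofsplitting}) for $(\hat\sigma,\hat\mu_b,\hat\rho)$, and check that the resulting column is exactly the transform under~(\ref{changeofsplitting}) of $D_a[\sigma,\mu_b,\rho]^{\mathrm t}$. This is a bookkeeping exercise: the top slot is essentially immediate once one notes $\hat\nabla_a\hat\sigma-\hat\mu_a=\nabla_a\sigma-\mu_a$ because $\sigma$ has weight $1$; the middle and bottom slots require patiently collecting the $\Upsilon$-linear and $\Upsilon$-quadratic terms. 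The middle slot is where the curious term $-J_{ab}\alpha^c\mu_c$ and the coefficient conventions earn their keep, and the bottom slot is the messiest, since the combination $2\alpha^b\Rho_{ab}+\alpha^b\nabla_a\alpha_b$ must absorb the variation of $\Rho$, of $\alpha$, and the cross-terms produced by differentiating $\Upsilon^b\alpha_b\sigma$; here one uses that $\nabla_{[a}\alpha_{b]}=0$ (equivalently $\hat\nabla_{[a}\hat\alpha_{b]}=0$) to symmetrise $\nabla_a\alpha_b$ freely.

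For preservation of the form, I would differentiate $\langle s,\tilde s\rangle$ for two sections $s=[\sigma,\mu_b,\rho]^{\mathrm t}$, $\tilde s=[\tilde\sigma,\tilde\mu_c,\tilde\rho]^{\mathrm t}$ using the explicit formula $\sigma\tilde\rho+\mu^b\tilde\mu_b-\rho\tilde\sigma$, being careful that raising the index on $\mu_b$ uses $J^{bc}$ which is itself not $\nabla$-parallel: $\nabla_a\mu^b=\nabla_a(J^{bc}\mu_c)=J^{bc}\nabla_a\mu_c+(\nabla_aJ^{bc})\mu_c$ and by~(\ref{newmastereq}) the correction is $(\alpha^b\delta_a{}^c-\alpha^c\delta_a{}^b)\mu_c=\alpha^b\mu_a-\delta_a{}^b\alpha^c\mu_c$. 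Then I expand $\partial_a\langle s,\tilde s\rangle-\langle D_as,\tilde s\rangle-\langle s,D_a\tilde s\rangle$ and check it vanishes: the $\nabla_a\sigma$, $\nabla_a\rho$, $\nabla_a\mu_b$ pieces cancel against the Leibniz terms, the $J_{ab}\rho$ and $\Rho_{ab}\sigma$ contributions cancel in antisymmetric pairs between the two slots, the $\Rho_a{}^b\mu_b$ terms cancel likewise, and the key point is that the extra $-J_{ab}\alpha^c\mu_c$ in the middle slot is precisely arranged to cancel the $(\nabla_aJ^{bc})\mu_c$ anomaly coming from differentiating $\mu^b\tilde\mu_b$ — this is the cleanest conceptual way to see why that term must be there. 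The $\alpha^b\Rho_{ab}\sigma$ and $\alpha^b\nabla_a\alpha_b\,\sigma$ terms in the bottom slot are symmetric in $\sigma\leftrightarrow\tilde\sigma$ and so cancel between the two bracket terms.

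I expect the main obstacle to be the bottom-slot computation in the well-definedness check: tracking the $O(\Upsilon^2)$ terms. One wants the variation of $\rho-\Upsilon^b\mu_b+\Upsilon^b\alpha_b\sigma$ under $D_a$ to match, and the quadratic terms $-\nabla_a\Upsilon_b+\Upsilon_a\Upsilon_b$ from $\hat\Rho$, together with $\hat\alpha^b=\alpha^b+\Upsilon^b$ appearing quadratically in $2\hat\alpha^b\hat\Rho_{ab}+\hat\alpha^b\hat\nabla_a\hat\alpha_b$, produce several terms that only collapse after using closedness of $\alpha$ and of $\Upsilon$ and the symmetry of $\Rho_{ab}$; a sign or factor-of-two slip is the likely failure mode. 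Once that slot closes, the rest is routine. It may be cleaner to organise the computation by first checking invariance in a Fedosov gauge (where $\alpha_a=0$, $\nabla_aJ_{bc}=0$) and then invoking that any two representatives differ by a composition of such rescalings, but since a global Fedosov gauge need not exist one must in any case carry out the general variation, so I would do it directly.
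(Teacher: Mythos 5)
Your plan is essentially the paper's own proof: a direct check that applying $\hat D_a$ to the hatted splitting~(\ref{changeofsplitting}) reproduces the hat-transform of $D_a$, using $\hat\alpha_a=\alpha_a+\Upsilon_a$, $\hat\Rho_{ab}=\Rho_{ab}-\nabla_a\Upsilon_b+\Upsilon_a\Upsilon_b$, and the identity $\nabla_aJ^{bc}=2\alpha^{[b}\delta_a{}^{c]}$, followed by the same Leibniz computation for the skew form in which the $-J_{ab}\alpha^c\mu_c$ term cancels the $(\nabla_aJ^{bc})\mu_b\tilde\mu_c$ anomaly. The paper merely packages the bookkeeping you describe (the variation of $\nabla_a\Upsilon^b$ and of $T_a=2\alpha^b\Rho_{ab}+\alpha^b\nabla_a\alpha_b$) into Lemmas~\ref{nablaUpsilon} and~\ref{magic}, so your outline is the same argument.
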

\begin{proof} Recall that (\ref{mastereq}) can be rewritten according to 
Proposition~\ref{finite_type} as
$$\nabla_aJ^{bc}=2\alpha^{[b}\delta_a{}^{c]}.$$
We shall show in Lemma~\ref{nablaUpsilon} below that this leads to
\begin{equation}\label{another_key_identity}
\nabla_a\Upsilon^b=J^{bc}\nabla_a\Upsilon_c+
\Upsilon_a\alpha^b+\Upsilon^c\alpha_c\delta_a{}^b.\end{equation}
For convenience, let
$$T_a\equiv 2\alpha^b\Rho_{ab}+\alpha^b\nabla_a\alpha_b.$$
Now we compute
$$\begin{array}{l}
\hat D_a\left[\begin{array}c\hat\sigma\\ \hat\mu_b\\ \hat\rho
\end{array}\right]=
\left[\begin{array}{c}\hat\nabla_a\hat\sigma-\hat\mu_a\\ 
\hat\nabla_a\hat\mu_b-\hat J_{ab}\hat\rho+\hat\Rho_{ab}\hat\sigma
-\hat J_{ab}\hat\alpha^c\hat\mu_c\\ 
\hat\nabla_a\hat\rho+\hat\Rho_{ab}\hat\mu^b
-\hat T_a\hat\sigma
\end{array}\right]\\[25pt]
=\left[\begin{array}c\hat\nabla_a\sigma-(\mu_a+\Upsilon_a\sigma)\\ 
\hspace{-5pt}\begin{array}{l}\hat\nabla_a(\mu_b+\Upsilon_b\sigma)
-J_{ab}(\rho-\Upsilon^c\mu_c+\Upsilon^c\alpha_c\sigma)\\
\hspace{40pt}{}+(\Rho_{ab}-\nabla_a\Upsilon_b+\Upsilon_a\Upsilon_b)\sigma
-J_{ab}(\alpha^c+\Upsilon^c)(\mu_c+\Upsilon_c\sigma)\end{array}\hspace{-5pt}\\ 
\hspace{-15pt}\begin{array}{l}
\hat\nabla_a(\rho-\Upsilon^c\mu_c+\Upsilon^c\alpha_c\sigma)\\
\hspace{60pt}{}+(\Rho_{ab}-\nabla_a\Upsilon_b+\Upsilon_a\Upsilon_b)
(\mu^b+\Upsilon^b\sigma)-\hat T_a\sigma
\end{array}\hspace{-5pt}
\end{array}\right]\\[50pt]
=\left[\begin{array}c\nabla_a\sigma+\Upsilon_a\sigma-(\mu_a+\Upsilon_a\sigma)\\
\hspace{-5pt}\begin{array}{l}
\nabla_a(\mu_b+\Upsilon_b\sigma)-\Upsilon_b(\mu_a+\Upsilon_a\sigma)
-J_{ab}(\rho-\Upsilon^c\mu_c+\Upsilon^c\alpha_c\sigma)\\
\hspace{40pt}{}+(\Rho_{ab}-\nabla_a\Upsilon_b+\Upsilon_a\Upsilon_b)\sigma
-J_{ab}(\alpha^c+\Upsilon^c)(\mu_c+\Upsilon_c\sigma)\end{array}\hspace{-5pt}\\ 
\hspace{-15pt}\begin{array}{l}
\nabla_a(\rho-\Upsilon^c\mu_c+\Upsilon^c\alpha_c\sigma)
-\Upsilon_a(\rho-\Upsilon^c\mu_c+\Upsilon^c\alpha_c\sigma)\\
\hspace{60pt}{}+(\Rho_{ab}-\nabla_a\Upsilon_b+\Upsilon_a\Upsilon_b)
(\mu^b+\Upsilon^b\sigma)-\hat T_a\sigma
\end{array}\hspace{-5pt}
\end{array}\right],\end{array}$$
which enjoys some cancellation when expanded, yielding
$$\left[\begin{array}c\nabla_a\sigma-\mu_a\\
\nabla_a\mu_b-J_{ab}\rho+\Rho_{ab}\sigma
+\Upsilon_b\left(\nabla_a\sigma-\mu_a\right)
-J_{ab}\alpha^c\mu_c\\ 
\hspace{-5pt}\begin{array}{l}
\nabla_a\rho-(\nabla_a\Upsilon^c)(\mu_c-\alpha_c\sigma)
-\Upsilon^c\nabla_a\mu_c
+\Upsilon^c\nabla_a(\alpha_c\sigma)
-\Upsilon_a\rho\\
\hspace{5pt}{}
-\Upsilon_a\Upsilon^c\alpha_c\sigma+\Rho_{ab}\mu^b-(\nabla_a\Upsilon_b)\mu^b
+\Rho_{ab}\Upsilon^b\sigma-(\nabla_a\Upsilon_b)\Upsilon^b\sigma-\hat T_a\sigma
\end{array}\hspace{-5pt}
\end{array}\right]$$
and, if we substitute for $\nabla_a\Upsilon^c$ in accordance with 
(\ref{another_key_identity}), then a little more cancellation occurs, yielding
$$\left[\begin{array}c\nabla_a\sigma-\mu_a\\
\nabla_a\mu_b-J_{ab}\rho+\Rho_{ab}\sigma
+\Upsilon_b\left(\nabla_a\sigma-\mu_a\right)
-J_{ab}\alpha^c\mu_c\\ 
\hspace{-5pt}\begin{array}{l}
\nabla_a\rho+\Rho_{ab}\mu^b-\hat T_a\sigma+2\Upsilon^b\Rho_{ab}\sigma
-\Upsilon^b(\nabla_a\Upsilon_b)\sigma\\
\quad{}-\alpha^b(\nabla_a\Upsilon_b)\sigma
+\Upsilon^b\alpha_b\alpha_a\sigma
+\Upsilon^b(\nabla_a\alpha_b)\sigma-\Upsilon_a\Upsilon^b\alpha_b\sigma\\
\qquad{}-\Upsilon^b
(\nabla_a\mu_b-J_{ab}\rho+\Rho_{ab}\sigma-J_{ab}\alpha^c\mu_c)
+\Upsilon^b\alpha_b(\nabla_a\sigma-\mu_a)
\end{array}\hspace{-5pt}
\end{array}\right].$$
But in Lemma~\ref{magic} below we show that
\begin{equation}\label{tricky}
\begin{array}{rcl}\hat{T}_a&=&T_a+2\Upsilon^b\Rho_{ab}
-\Upsilon^b\nabla_a\Upsilon_b\\
&&\qquad{}-\alpha^b\nabla_a\Upsilon_b+\Upsilon^b\alpha_b\alpha_a
+\Upsilon^b\nabla_a\alpha_b-\Upsilon_a\Upsilon^b\alpha_b
\end{array}\end{equation}
and so this expression reduces to
$$\left[\begin{array}c\nabla_a\sigma-\mu_a\\
\nabla_a\mu_b-J_{ab}\rho+\Rho_{ab}\sigma
-J_{ab}\alpha^c\mu_c+\Upsilon_b\left(\nabla_a\sigma-\mu_a\right)\\ 
\hspace{-5pt}\begin{array}{l}
\nabla_a\rho+\Rho_{ab}\mu^b-T_a\sigma\\
\quad{}
-\Upsilon^b(\nabla_a\mu_b-J_{ab}\rho+\Rho_{ab}\sigma-J_{ab}\alpha^c\mu_c)
+\Upsilon^b\alpha_b(\nabla_a\sigma-\mu_a)
\end{array}\hspace{-5pt}
\end{array}\right],$$
which is exactly
$$\mbox{\Large$\widehat{\mbox{\normalsize$D_a
\left[\begin{array}c\sigma\\ \mu_b\\ \rho\end{array}\right]$}}$}=
\mbox{\Huge$
\widehat{\mbox{\normalsize$\left[\begin{array}c\nabla_a\sigma-\mu_a\\ 
\nabla_a\mu_b-J_{ab}\rho+\Rho_{ab}\sigma-J_{ab}\alpha^c\mu_c\\ 
\nabla_a\rho-\Rho_a{}^b\mu_b-T_a\sigma
\end{array}\right]$}}$},$$
as required.

Finally, we compute
$$\begin{array}l\left\langle D_a
\left[\begin{array}c\sigma\\ \mu_b\\ \rho\end{array}\right],
\left[\begin{array}c\tilde\sigma\\ \tilde\mu_c\\ \tilde\rho\end{array}\right]
\right\rangle+\left\langle
\left[\begin{array}c\sigma\\ \mu_b\\ \rho\end{array}\right],D_a
\left[\begin{array}c\tilde\sigma\\ \tilde\mu_c\\ \tilde\rho\end{array}\right]
\right\rangle=\\[22pt]
\quad{}\nabla_a(\sigma\tilde\rho)
-J^{bc}\nabla_a(\mu_b\tilde\mu_c)
-\alpha^b\mu_b\tilde\mu_a+\alpha^c\mu_a\tilde\mu_c
-\nabla_a(\rho\tilde\sigma)=\\[4pt]
\qquad\nabla_a(\sigma\tilde\rho)
-J^{bc}\nabla_a(\mu_b\tilde\mu_c)
-\alpha^b\delta_a{}^c\mu_b\tilde\mu_c+\alpha^c\delta_a{}^b\mu_b\tilde\mu_c
-\nabla_a(\rho\tilde\sigma)=\\[4pt]
\quad\qquad\nabla_a(\sigma\tilde\rho)
-J^{bc}\nabla_a(\mu_b\tilde\mu_c)
-(\nabla_aJ^{bc})\mu_b\tilde\mu_c-\nabla_a(\rho\tilde\sigma)=\\[4pt]
\quad\qquad\qquad
\nabla_a(\sigma\tilde\rho-J^{bc}\mu_b\tilde\mu_c-\rho\tilde\sigma)=
\nabla_a
\left\langle\left[\begin{array}c\sigma\\ \mu_b\\ \rho\end{array}\right],
\left[\begin{array}c\tilde\sigma\\ \tilde\mu_c\\ \tilde\rho\end{array}\right]
\right\rangle,
\end{array}$$
as required.
\end{proof}

\begin{lemma}\label{nablaUpsilon} The identity\/ 
{\rm(\ref{another_key_identity})} holds.
\end{lemma}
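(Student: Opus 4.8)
The plan is to obtain (\ref{another_key_identity}) by differentiating the definition $\Upsilon^b = J^{bc}\Upsilon_c$ of the raised index and feeding in the formula for $\nabla_a J^{bc}$ supplied by Proposition~\ref{finite_type}. Concretely, the Leibniz rule gives
$$\nabla_a\Upsilon^b = (\nabla_a J^{bc})\,\Upsilon_c + J^{bc}\,\nabla_a\Upsilon_c,$$
so the entire content of the lemma sits in the first term on the right. Into it I would substitute $\nabla_a J^{bc} = 2\alpha^{[b}\delta_a{}^{c]} = \alpha^b\delta_a{}^c - \alpha^c\delta_a{}^b$, which is just the rewriting~(\ref{newmastereq}) of the defining equation~(\ref{mastereq}); this yields $(\nabla_a J^{bc})\Upsilon_c = \alpha^b\Upsilon_a - (\alpha^c\Upsilon_c)\,\delta_a{}^b$.

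The only point requiring care is the sign bookkeeping forced by the skew-symmetry of $J$. Because $J^{ce} = -J^{ec}$, contracting an upper index against a lower one flips a sign when both arise from $J$: explicitly $\alpha^c\Upsilon_c = J^{ce}\alpha_e\Upsilon_c = -J^{ec}\alpha_e\Upsilon_c = -\Upsilon^c\alpha_c$. Hence $-(\alpha^c\Upsilon_c)\delta_a{}^b = (\Upsilon^c\alpha_c)\delta_a{}^b$, and recombining the pieces gives precisely
$$\nabla_a\Upsilon^b = J^{bc}\,\nabla_a\Upsilon_c + \Upsilon_a\alpha^b + \Upsilon^c\alpha_c\,\delta_a{}^b,$$
which is~(\ref{another_key_identity}).

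I do not expect a genuine obstacle here: once Proposition~\ref{finite_type} is in hand the computation is two lines, and the only subtle spot is the antisymmetry of $J$ discussed above, which is exactly what converts the $\alpha^c\Upsilon_c$ emerging from $\nabla_a J^{bc}$ into the $\Upsilon^c\alpha_c$ appearing in the stated identity. It is worth noting that nothing about $\nabla_a\alpha_b$, the symmetry of $\nabla_a\Upsilon_b$, or the special form $\Upsilon_a = \nabla_a\log\Omega$ enters; the identity holds verbatim with any $1$-form in place of $\Upsilon_a$, being simply the commutator of raising an index with $J$ and applying the covariant derivative on a conformally Fedosov manifold.
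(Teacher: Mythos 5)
Your proposal is correct and is essentially the paper's own proof: differentiate $\Upsilon^b=J^{bc}\Upsilon_c$ by the Leibniz rule, substitute $\nabla_aJ^{bc}=2\alpha^{[b}\delta_a{}^{c]}$ from Proposition~\ref{finite_type}, and simplify. Your explicit remark that $\alpha^c\Upsilon_c=-\Upsilon^c\alpha_c$ under the skew raising convention is exactly the sign bookkeeping the paper leaves implicit when it writes its final expression in the form of~(\ref{another_key_identity}).
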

\begin{proof} We compute
$$\nabla_a\Upsilon^b=\nabla_a\left(J^{bc}\Upsilon_c\right)=
J^{bc}\nabla_a\Upsilon_c+\left(\nabla_aJ^{bc}\right)\Upsilon_c$$
and we substitute from (\ref{newmastereq}) to conclude that 
$$\nabla_a\Upsilon^b=J^{bc}\nabla_a\Upsilon_c
+\left(\alpha^b\delta_a{}^c-\alpha^c\delta_a{}^b\right)\Upsilon_c=
J^{bc}\nabla_a\Upsilon_c
+\alpha^b\Upsilon_a-\alpha^c\Upsilon_c\delta_a{}^b,$$
as required.\end{proof}
\begin{lemma}\label{magic} The identity\/ 
{\rm(\ref{tricky})} holds.
\end{lemma}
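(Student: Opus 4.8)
The plan is to verify (\ref{tricky}) by a direct expansion of $\hat T_a=2\hat\alpha^b\hat\Rho_{ab}+\hat\alpha^b\hat\nabla_a\hat\alpha_b$ in terms of the unhatted data, followed by collecting terms. All the transformation laws needed are already in hand: $\hat\alpha_a=\alpha_a+\Upsilon_a$ (whence also $\hat\alpha^b=\alpha^b+\Upsilon^b$), the rule $\hat\Rho_{ab}=\Rho_{ab}-\nabla_a\Upsilon_b+\Upsilon_a\Upsilon_b$ of Theorem~\ref{curvature_decomposition}, and the projective change (\ref{projectivechange}) (the instance of (\ref{simultaneousrescaling}) in which $\nu_a=\Upsilon_a$), which applied to the $1$-form $\hat\alpha_b=\alpha_b+\Upsilon_b$ gives
$$\hat\nabla_a\hat\alpha_b=\nabla_a\alpha_b+\nabla_a\Upsilon_b-\Upsilon_a\alpha_b-\Upsilon_b\alpha_a-2\Upsilon_a\Upsilon_b.$$

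First I would substitute these into $2\hat\alpha^b\hat\Rho_{ab}$ and into $\hat\alpha^b\hat\nabla_a\hat\alpha_b$ separately, each becoming a short sum of monomials in $\alpha$, $\Upsilon$, $\nabla_a\alpha_b$, $\nabla_a\Upsilon_b$ and $\Rho_{ab}$, and then add them and collect. The points that make everything fall into place are the skew-symmetry of $J^{ab}$ --- so that $\alpha^b\alpha_b=0$, $\Upsilon^b\Upsilon_b=0$, and the pairing is alternating, $\alpha^b\Upsilon_b=-\Upsilon^b\alpha_b$ --- together with the outright cancellation of several pairs of monomials (the two copies of $\alpha^b\Upsilon_a\Upsilon_b$, the two copies of $\Upsilon^b\Upsilon_a\Upsilon_b$, and the terms $\Upsilon_a\alpha^b\alpha_b$ and $\Upsilon^b\Upsilon_b\alpha_a$ that vanish by skewness). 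What survives is exactly $T_a=2\alpha^b\Rho_{ab}+\alpha^b\nabla_a\alpha_b$ together with the six correction terms on the right of (\ref{tricky}); in particular the term $-\alpha^b\Upsilon_b\,\alpha_a$ coming out of $\hat\alpha^b\hat\nabla_a\hat\alpha_b$ turns into $+\Upsilon^b\alpha_b\,\alpha_a$ precisely because of the sign in $\alpha^b\Upsilon_b=-\Upsilon^b\alpha_b$.

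I do not expect any conceptual obstacle here: the proof is a bookkeeping exercise over the monomials that arise, and the one place where a sign most easily goes astray is the alternating character of the $J$-pairing used in the final collection. It is worth remarking that, in contrast with the proof of Proposition~\ref{well-defined}, the identity (\ref{another_key_identity}) of Lemma~\ref{nablaUpsilon} is not needed for this lemma: only $\nabla_a\Upsilon_b$ with a lowered index --- never $\nabla_a\Upsilon^b$ --- occurs in $\hat T_a$, so no commutation of the connection with the raising of an index is called for.
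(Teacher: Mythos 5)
Your proposal is correct and is essentially the paper's own argument: expand $2\hat\alpha^b\hat\Rho_{ab}$ and $\hat\alpha^b\hat\nabla_a\hat\alpha_b$ using $\hat\alpha_a=\alpha_a+\Upsilon_a$, the transformation of $\Rho_{ab}$, and the projective rule for $\hat\nabla_a$ applied to $\hat\alpha_b$, then collect terms using the skewness of $J^{ab}$ (so $\hat\alpha^b\hat\alpha_b=0$, $\Upsilon^b\Upsilon_b=0$, $\alpha^b\Upsilon_b=-\Upsilon^b\alpha_b$). Your observation that Lemma~\ref{nablaUpsilon} is not needed here is also consistent with the paper's proof.
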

\begin{proof}
We compute
$$\begin{array}{rcl}2\hat\alpha^b\hat\Rho_{ab}
&=&2(\alpha^b+\Upsilon^b)(\Rho_{ab}-\nabla_a\Upsilon_b+\Upsilon_a\Upsilon_b)\\
&=&2\alpha^b\Rho_{ab}+2\Upsilon^b\Rho_{ab}
-2\Upsilon^b\nabla_a\Upsilon_b
-2\alpha^b\nabla_a\Upsilon_b
-2\Upsilon_a\Upsilon^b\alpha_b
\end{array}$$
and
$$\begin{array}{rcl}\hat\alpha^b\hat\nabla_a\hat\alpha_b
&=&(\alpha^b+\Upsilon^b)\nabla_a(\alpha_b+\Upsilon_b)
-(\alpha^b+\Upsilon^b)\Upsilon_b(\alpha_a+\Upsilon_a)\\
&=&\alpha^b\nabla_a\alpha_b+\Upsilon^b\nabla_a\Upsilon_b\\
&&\qquad{}+\alpha^b\nabla_a\Upsilon_b+\Upsilon^b\alpha_b\alpha_a
+\Upsilon^b\nabla_a\alpha_b+\Upsilon_a\Upsilon^b\alpha_b.
\end{array}$$
Adding these two equations gives~(\ref{tricky}), as required.
\end{proof}


\begin{prop}
The following two homomorphisms\/ 
${\mathbb{T}}\to\Lambda^1\otimes{\mathbb{T}}$ 
$$\left[\begin{array}c\sigma\\ \mu_b\\ \rho\end{array}\right]\mapsto
\left[\begin{array}c0\\ \Phi_{ab}\sigma\\ 
\Phi_{ab}\mu^b+2(\nabla^b\Phi_{ab})\sigma
\end{array}\right]
\quad\mbox{or}\quad
\left[\begin{array}c0\\ 0\\ 
(\nabla^b\Phi_{ab}+\alpha^a\Phi_{ab})\sigma
\end{array}\right]$$
are invariantly defined.
\end{prop}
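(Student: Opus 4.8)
\section*{Proof proposal}

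The plan is to verify directly that each of the two displayed homomorphisms is independent of the chosen representatives $(J_{ab},\nabla_a)$, which is precisely what \emph{invariantly defined} means here. Concretely, one applies each formula with all quantities hatted to the hatted components $(\hat\sigma,\hat\mu_b,\hat\rho)$ and checks that the result coincides with the image of $(\sigma,\mu_b,\rho)$ transformed afterwards by the change of splitting~(\ref{changeofsplitting}) (with the free $\Lambda^1$--index $a$ a spectator). The ingredients are: (\ref{changeofsplitting}) itself; the facts $\hat\Phi_{ab}=\Phi_{ab}$ from Theorem~\ref{curvature_decomposition} and $\hat\alpha_a=\alpha_a+\Upsilon_a$ from the discussion following~(\ref{simultaneousrescaling}); the consequences $\hat\mu^b=\mu^b+\Upsilon^b\sigma$ and $\hat\alpha^b=\alpha^b+\Upsilon^b$ obtained by raising an index with $J$; and the projective change of connection~(\ref{projectivechange}) applied with $\nu_a$ equal to the closed form $\Upsilon_a$ of~(\ref{simultaneousrescaling}).

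The key computational step is to determine how $\nabla^b\Phi_{ab}$ transforms. Applying~(\ref{projectivechange}) to the symmetric $(0,2)$--tensor $\Phi_{ab}$ gives
$$\hat\nabla_e\Phi_{ab}=\nabla_e\Phi_{ab}-2\Upsilon_e\Phi_{ab}-\Upsilon_a\Phi_{eb}-\Upsilon_b\Phi_{ae},$$
and contracting with $J^{be}$, using that $J^{be}\Phi_{eb}=0$ because $J$ is skew and $\Phi$ symmetric, yields
$$\hat\nabla^b\hat\Phi_{ab}=\nabla^b\Phi_{ab}-\Upsilon^b\Phi_{ab}.$$
For the first homomorphism the top slot $0$ is trivially preserved, and the middle slot is $\hat\Phi_{ab}\hat\sigma=\Phi_{ab}\sigma$, which matches~(\ref{changeofsplitting}) for the output since that output has vanishing top slot. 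For the bottom slot one substitutes $\hat\Phi_{ab}\hat\mu^b=\Phi_{ab}\mu^b+\Upsilon^b\Phi_{ab}\sigma$ and $2\hat\nabla^b\hat\Phi_{ab}\,\hat\sigma=2\nabla^b\Phi_{ab}\,\sigma-2\Upsilon^b\Phi_{ab}\sigma$; the sum is $\Phi_{ab}\mu^b+2\nabla^b\Phi_{ab}\,\sigma-\Upsilon^b\Phi_{ab}\sigma$, which is exactly the value obtained by feeding the unhatted output through~(\ref{changeofsplitting}). For the second homomorphism, the $\alpha$--correction term transforms by $+\Upsilon^b\Phi_{ab}$, cancelling the $-\Upsilon^b\Phi_{ab}$ acquired by $\nabla^b\Phi_{ab}$, so $\nabla^b\Phi_{ab}+\alpha^b\Phi_{ab}$ is itself invariant; since the top and middle slots of the output vanish, (\ref{changeofsplitting}) forces its bottom slot to be unchanged, and this completes the argument.

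The main obstacle is the transformation law for $\nabla^b\Phi_{ab}$ displayed above: one must keep careful track of the projective--change correction terms for a symmetric two--tensor and of the convention that indices are raised and lowered with the (weight--two, hence representative--independent) conformal form $J$, so that no rescaling factors intrude. Once that identity is in hand the remainder is bookkeeping, and it is worth noting that the Bianchi identities~(\ref{contractedBianchi}) and~(\ref{CottonYork}) play no role in this particular proposition.
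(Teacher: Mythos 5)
Your proposal is correct and follows essentially the same route as the paper: the paper's proof likewise records the transformation laws $\hat\nabla^b\hat\Phi_{ab}=\nabla^b\Phi_{ab}-\Upsilon^b\Phi_{ab}$ and $\hat\alpha^b\hat\Phi_{ab}=\alpha^b\Phi_{ab}+\Upsilon^b\Phi_{ab}$ (using $\hat\Phi_{ab}=\Phi_{ab}$ and $\hat\alpha_a=\alpha_a+\Upsilon_a$) and then declares the verification against (\ref{changeofsplitting}) immediate. You have simply written out that bookkeeping explicitly, which matches the intended argument.
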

\begin{proof}
Since 
$\hat\nabla_c\hat\Phi_{ab}=\hat\nabla_c\Phi_{ab}=\nabla_c\Phi_{ab}
-2\Upsilon_c\Phi_{ab}-\Upsilon_a\Phi_{cb}-\Upsilon_b\Phi_{ac}$ and 
$\hat\alpha_a=\alpha_a+\Upsilon_a$ it follows that 
$$\hat\nabla^b\hat\Phi_{ab}=\nabla^b\Phi_{ab}-\Upsilon^b\Phi_{ab}
\quad\mbox{and}\quad
\hat\alpha^a\hat\Phi_{ab}=\alpha^b\Phi_{ab}+\Upsilon^a\Phi_{ab}.$$
The required verifications are immediate.
\end{proof}
Finally, the {\em tractor connection\/} on ${\mathbb{T}}$ is defined by 
modifying $D_a$ from (\ref{fedosov_proto_tractors}) by appropriate multiples 
of these homomorphisms. The precise formula is
\begin{equation}\label{tractor_connection}
\nabla_a\!\left[\!\begin{array}c\sigma\\ \mu_b\\ \rho\end{array}\!\right]
\!\equiv\!
\left[\!\!\!\begin{array}c\nabla_a\sigma-\mu_a\\ 
\nabla_a\mu_b-J_{ab}\rho+\Rho_{ab}\sigma-\frac{3}{2n-1}\Phi_{ab}\sigma
-J_{ab}\alpha^c\mu_c\\ 
\begin{array}{l}
\nabla_a\rho+\Rho_{ab}\mu^b-\frac{3}{2n-1}\Phi_{ab}\mu^b
-\frac{1}{2n+1}(\nabla^b\Phi_{ab})\sigma\\
\enskip\quad{}-(2\alpha^b\Rho_{ab}+\alpha^b\nabla_a\alpha_b
-\frac{10n+7}{(2n+1)(2n-1)}\alpha^b\Phi_{ab})\sigma\end{array}
\end{array}\!\!\!\right]\!.\end{equation}
\begin{thm}\label{mainthm} This connection is
well-defined, i.e.~is independent of choice of representatives
$(J_{ab},\nabla_a)$. It preserves the skew form\/~{\rm(\ref{bigJ})}. Its 
curvature is given by 
$$\begin{array}{rcl}(\nabla_a\nabla_b-\nabla_b\nabla_a)\!
\left[\begin{array}c\sigma\\ \mu_c\\ \rho\end{array}\right]&\!\!=\!\!&
\left[\begin{array}c0\\
V_{abcd}\mu^d+Y_{abc}\sigma\\
Y_{abc}\mu^c
-\frac{1}{2n}(\nabla^cY_{abc}-V_{abce}\Phi^{ce})\sigma
\end{array}\right]\\[20pt]
&&{}-2J_{ab}\left[\begin{array}c\rho\\S_c\sigma -\Phi_{cd}\mu^d\\
S_c\mu^c
-\frac{1}{2n}(\Phi_{de}\Phi^{de}+\nabla^cS_c)\sigma\end{array}\right]
\end{array}$$
in Fedosov gauge.
\end{thm}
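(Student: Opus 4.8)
The plan has three parts. First I would dispose of well-definedness and of the invariance of the skew form~(\ref{bigJ}), neither of which needs a fresh computation. The displayed connection is simply
$$\nabla_a=D_a-\frac{3}{2n-1}\,\Theta^{(1)}_a
+\frac{10n+7}{(2n+1)(2n-1)}\,\Theta^{(2)}_a,$$
where $D_a$ is the connection~(\ref{fedosov_proto_tractors}), $\Theta^{(1)}_a,\Theta^{(2)}_a$ are the two homomorphisms of the preceding Proposition, and $\frac{10n+7}{(2n+1)(2n-1)}=\frac{6}{2n-1}-\frac{1}{2n+1}$; one checks this by comparing the three slots (it is precisely the identity $\frac{6}{2n-1}-\frac{1}{2n+1}=\frac{10n+7}{(2n+1)(2n-1)}$ that makes the $\nabla^b\Phi_{ab}$- and $\alpha^b\Phi_{ab}$-terms in the bottom slot come out right). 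Since $D_a$ is well-defined by Proposition~\ref{well-defined} and $\Theta^{(1)}_a,\Theta^{(2)}_a$ are invariantly defined by the preceding Proposition, so is $\nabla_a$; and since $D_a$ preserves~(\ref{bigJ}) by Proposition~\ref{well-defined} whilst each of $\Theta^{(1)}_a,\Theta^{(2)}_a$ is visibly skew with respect to~(\ref{bigJ}) --- a one-line check using the symmetry of $\Phi_{ab}$ and the fact that these homomorphisms are strictly lower-triangular for the composition series of~${\mathbb{T}}$ --- so does $\nabla_a$.

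For the curvature I would work in a Fedosov gauge, where $\alpha_a=0$, hence $\nabla_aJ_{bc}=0$, and, by~(\ref{Rho_v_Phi}), $\Rho_{ab}-\frac{3}{2n-1}\Phi_{ab}=\Phi_{ab}$, so that the tractor connection collapses to
$$\nabla_a\!\left[\begin{array}{c}\sigma\\ \mu_b\\ \rho\end{array}\right]=
\left[\begin{array}{c}\nabla_a\sigma-\mu_a\\ \nabla_a\mu_b-J_{ab}\rho+\Phi_{ab}\sigma\\
\nabla_a\rho+\Phi_{ab}\mu^b-S_a\sigma\end{array}\right],\qquad S_a=\tfrac{1}{2n+1}\nabla^b\Phi_{ab}.$$
Writing $\nabla_a=\nabla^0_a+\Gamma_a$ with $\nabla^0_a$ the block-diagonal affine connection and $\Gamma_a$ the algebraic remainder, the curvature is $\kappa_{ab}=R^0_{ab}+(\nabla^0_a\Gamma_b-\nabla^0_b\Gamma_a)+(\Gamma_a\Gamma_b-\Gamma_b\Gamma_a)$, where $R^0_{ab}$ is zero on the two density line bundles and is the affine curvature on $\Lambda^1$ in the middle summand. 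I would then collect slot by slot, feeding in~(\ref{def_of_curvature}), the symmetries~(\ref{fedosovsymmetries}) and the branching~(\ref{branched}). The top slot comes out as $-2J_{ab}\rho$ once the $\nabla\nabla\sigma$- and cross-$\nabla\mu$-contributions cancel. In the middle slot the $J_{ac}\Phi_{bd}$-type terms produced by~(\ref{branched}) cancel against those produced by $\Gamma_a\Gamma_b$, leaving $V_{abcd}\mu^d+2J_{ab}\Phi_{cd}\mu^d$ for the $\mu$-part and, after recognising the definition of $Y_{abc}$, the combination $Y_{abc}\sigma-2J_{ab}S_c\sigma$ for the $\sigma$-part. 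In the bottom slot the $\mu$-dependence likewise assembles, via the definition of $Y_{abc}$, into $Y_{abc}\mu^c-2J_{ab}S_c\mu^c$, and there is no $\rho$-dependence.

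The one genuinely delicate step is the $\sigma$-coefficient of the bottom slot, where one is left with $-2\nabla_{[a}S_{b]}\sigma$ (from $\nabla^0_a\Gamma_b-\nabla^0_b\Gamma_a$) together with a quadratic term $\Phi_{ac}J^{cd}\Phi_{bd}\,\sigma$ (from $\Gamma_a\Gamma_b$). I would rewrite $\nabla_{[a}S_{b]}$ by computing $\nabla^cY_{abc}$ from $Y_{abc}=\nabla_a\Phi_{bc}-\nabla_b\Phi_{ac}+J_{ac}S_b-J_{bc}S_a+2J_{ab}S_c$: differentiating and contracting the $J_{ac}S_b$-terms produces a $J_{ab}\nabla^cS_c$-term and drops the leading coefficient from $2n+1$ to $2n$, giving $\nabla^cY_{abc}=4n\,\nabla_{[a}S_{b]}+2J_{ab}\nabla^cS_c+(\text{curvature}\times\Phi\ \text{terms})$; commuting the two derivatives in $\nabla^c\nabla_a\Phi_{bc}$ past one another and invoking~(\ref{branched}) turns the curvature terms into exactly $V_{abce}\Phi^{ce}$, a multiple of $\Phi_{ac}J^{cd}\Phi_{bd}$, and a $J_{ab}\Phi_{de}\Phi^{de}$-term --- this being, in effect, the twice-contracted second Bianchi identity, of which~(\ref{contractedBianchi}) is the once-contracted version. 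Matching everything then produces the factor $\frac1{2n}$ and the stated formula. The main obstacle is concentrated here: commuting covariant derivatives past $V_{abcd}$ and, above all, keeping strict account of the signs generated by raising and lowering with the skew form $J_{ab}$ (where $J^{ab}\mu_b=\mu^a$ but $J^{ba}\mu_b=-\mu^a$); everything else is routine bookkeeping. A convenient global check is that $\kappa_{ab}$, being the curvature of a connection preserving~(\ref{bigJ}), is valued in the symplectic algebra of $({\mathbb{T}},\langle\ ,\rangle)$ --- which already pins down the shape of the answer up to the single trace term carrying $\frac1{2n}$ --- while the Fubini--Study ${\mathbb{CP}}_n$ recorded above ($V_{abcd}=0$, $Y_{abc}=0$, $S_a=0$, $\Phi_{ab}=g_{ab}$) gives a numerical test.
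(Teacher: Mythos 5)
Your proposal is correct and follows essentially the same route as the paper: well-definedness and preservation of~(\ref{bigJ}) are inherited from Proposition~\ref{well-defined} and the invariant homomorphisms (your explicit coefficients $-\frac{3}{2n-1}$ and $\frac{10n+7}{(2n+1)(2n-1)}=\frac{6}{2n-1}-\frac{1}{2n+1}$ check out), and the curvature is computed in Fedosov gauge from~(\ref{tractorconnectioninFedosovgauge}) using (\ref{branched}) and~(\ref{CottonYork}). Your ``delicate step'' for the $\sigma$-coefficient of the bottom slot --- computing $\nabla^cY_{abc}$ by commuting derivatives in $\nabla^c\nabla_a\Phi_{bc}$ and substituting~(\ref{branched}) --- is precisely the paper's Lemma~\ref{fabulous_new_identity}, so only the routine bookkeeping (e.g.\ the factor $2$ in the quadratic term $2\Phi_a{}^c\Phi_{bc}$) remains to be carried out carefully.
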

\begin{proof}
Mostly, these properties are inherited from the corresponding properties of
$D_a$ as demonstrated in Proposition~\ref{well-defined}. It only remains to
compute its curvature. According to (\ref{Rho_v_Phi}) the tractor connection in
Fedosov gauge is given by 
\begin{equation}\label{tractorconnectioninFedosovgauge}
\nabla_a\left[\begin{array}c\sigma\\ \mu_b\\
\rho\end{array}\right]= \left[\begin{array}c\nabla_a\sigma-\mu_a\\
\nabla_a\mu_b-J_{ab}\rho+\Phi_{ab}\sigma\\ 
\nabla_a\rho+\Phi_{ab}\mu^b-S_a\sigma
\end{array}\right],\end{equation}
where recall that $S_a\equiv\frac{1}{2n+1}\nabla^b\Phi_{ab}$.

We compute
$$\begin{array}{l}
\nabla_a\nabla_b\left[\begin{array}c\sigma\\ \mu_c\\ \rho\end{array}\right]
=\nabla_a\left[\begin{array}c\nabla_b\sigma-\mu_b\\ 
\nabla_b\mu_c-J_{bc}\rho+\Phi_{bc}\sigma\\ 
\nabla_b\rho+\Phi_{bc}\mu^c-S_b\sigma
\end{array}\right]\\[20pt]
\enskip{}=\left[\begin{array}c\nabla_a(\nabla_b\sigma-\mu_b)-
(\nabla_b\mu_a-J_{ba}\rho+\Phi_{ba}\sigma)\\ 
\begin{array}{r}\nabla_a(\nabla_b\mu_c-J_{bc}\rho+\Phi_{bc}\sigma)-
J_{ac}(\nabla_b\rho+\Phi_{bd}\mu^d-S_b\sigma)\qquad\\
{}+\Phi_{ac}(\nabla_b\sigma-\mu_b)\end{array}\\ 
\begin{array}{r}\nabla_a(\nabla_b\rho+\Phi_{bc}\mu^c-S_b\sigma)
-\Phi_a{}^c(\nabla_b\mu_c-J_{bc}\rho+\Phi_{bc}\sigma)\qquad\\
{}-S_a(\nabla_b\sigma-\mu_b)\end{array}
\end{array}\right].
\end{array}$$
Therefore,
$$\begin{array}{l}(\nabla_a\nabla_b-\nabla_b\nabla_a)
\left[\begin{array}c\sigma\\ \mu_c\\ \rho\end{array}\right]\\[20pt]
{}=\left[\begin{array}c-2J_{ab}\rho\\ 
\!\!\!\begin{array}{r}(\nabla_a\nabla_b-\nabla_b\nabla_a)\mu_c
-J_{ac}\Phi_{bd}\mu^d-\Phi_{ac}\mu_b
+J_{bc}\Phi_{ad}\mu^d+\Phi_{bc}\mu_a\enskip\\
{}+(\nabla_a\Phi_{bc}-\nabla_b\Phi_{ac}+J_{ac}S_b-J_{bc}S_a)\sigma
\end{array}\!\!\!\\
\begin{array}{r}
(\nabla_a\Phi_{bc}-\nabla_b\Phi_{ac}+J_{ac}S_b-J_{bc}S_a)\mu^c\qquad\\
{}-(\nabla_aS_b-\nabla_bS_a+2\Phi_a{}^c\Phi_{bc})\sigma\end{array}
\end{array}\right].
\end{array}$$
However, from (\ref{branched}) we see that
$$\begin{array}{l}(\nabla_a\nabla_b-\nabla_b\nabla_a)\mu_c=
R_{abcd}\mu^d\\
\qquad{}=V_{abcd}\mu^d
+J_{ac}\Phi_{bd}\mu^d-J_{bc}\Phi_{ad}\mu^d-\Phi_{bc}\mu_a
+\Phi_{ac}\mu_b
+2J_{ab}\Phi_{cd}\mu^d
\end{array}$$
and, if we also substitute from~(\ref{CottonYork}), then we obtain
$$\begin{array}{l}(\nabla_a\nabla_b-\nabla_b\nabla_a)
\left[\begin{array}c\sigma\\ \mu_c\\ \rho\end{array}\right]\\[20pt]
\enskip{}=\left[\begin{array}c-2J_{ab}\rho\\
V_{abcd}\mu^d+2J_{ab}\Phi_{cd}\mu^d+(Y_{abc}-2J_{ab}S_c)\sigma\\
(Y_{abc}-2J_{ab}S_c)\mu^c-(\nabla_aS_b-\nabla_bS_a+2\Phi_a{}^c\Phi_{bc})\sigma
\end{array}\right].
\end{array}$$
Lemma~\ref{fabulous_new_identity} below allows us to rewrite this expression 
as
$$\left[\!\!\begin{array}c-2J_{ab}\rho\\
V_{abcd}\mu^d+2J_{ab}\Phi_{cd}\mu^d+(Y_{abc}-2J_{ab}S_c)\sigma\\
(Y_{abc}-2J_{ab}S_c)\mu^c
-\frac{1}{2n}\big(\nabla^cY_{abc}\!-\!V_{abce}\Phi^{ce}\!-\!2J_{ab}
(\Phi_{de}\Phi^{de}\!+\!\nabla^cS_c)\big)\sigma\!\!
\end{array}\right],$$
as required.\end{proof}

\begin{lemma}\label{fabulous_new_identity}The identity
$$\nabla^cY_{abc}
=V_{abce}\Phi^{ce}
+2J_{ab}(\Phi_{de}\Phi^{de}+\nabla^cS_c)
+2n(\nabla_aS_b-\nabla_bS_a+2\Phi_a{}^c\Phi_{bc})$$
holds in Fedosov gauge.
\end{lemma}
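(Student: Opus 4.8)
The plan is to carry out the entire computation in the chosen Fedosov gauge, so that $\nabla_aJ_{bc}=0$; in particular covariant differentiation then commutes with the raising and lowering of indices by $J_{ab}$ and $J^{ab}$, and the curvature takes the branched form~(\ref{branched}). First I would substitute the explicit expression~(\ref{CottonYork}) for $Y_{abc}$ into the left-hand side. Since $J$ is parallel, $\nabla^cY_{abc}$ breaks into four manifest pieces: the two ``$S$-terms'' $\nabla^c(J_{ac}S_b)$ and $\nabla^c(2J_{ab}S_c)$, which are just $-\nabla_aS_b$ (up to the sign forced by the skew form) and $2J_{ab}\nabla^cS_c$, together with $\nabla^c\nabla_a\Phi_{bc}$ and $-\nabla^c\nabla_b\Phi_{ac}$. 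In these last two, one commutes the covariant derivatives by the Ricci identity~(\ref{def_of_curvature}); because $\nabla^c\Phi_{bc}=(2n+1)S_b$ by the very definition of $S_b$, this produces $(2n+1)(\nabla_aS_b-\nabla_bS_a)$ plus curvature contractions, and combined with the $-(\nabla_aS_b-\nabla_bS_a)$ already present one gets exactly the coefficient $2n$ on $\nabla_aS_b-\nabla_bS_a$ demanded by the statement. Thus the lemma is reduced to the purely algebraic identity
$$J^{cd}\big([\nabla_d,\nabla_a]\Phi_{bc}-[\nabla_d,\nabla_b]\Phi_{ac}\big)
=V_{abce}\Phi^{ce}+2J_{ab}\Phi_{de}\Phi^{de}+4n\,\Phi_a{}^c\Phi_{bc}.$$

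To prove this I would expand each commutator by the Ricci identity, turning it into a sum of two contractions of the curvature $R_{abcd}$ against $\Phi$, and then insert the decomposition~(\ref{branched}). The contractions of the $J\wedge\Phi$ part of~(\ref{branched}) are elementary: after using the symmetry of $\Phi_{ab}$ they collapse into multiples of $J_{ab}\Phi_{de}\Phi^{de}$ and of $\Phi_a{}^c\Phi_{bc}$, and matching these is where the factors of $n$ enter. The contractions of the $V_{abcd}$ part must be reduced, using the Bianchi symmetry $V_{[abc]d}=0$ together with the trace-freeness $J^{ab}V_{abcd}=0$ from~(\ref{whatVsatisfies}), to the single surviving term $V_{abce}\Phi^{ce}$ — this contraction being legitimate precisely because $\Phi^{ce}$ is symmetric and $V_{abcd}=V_{[ab](cd)}$. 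Collecting all the pieces and comparing coefficients then gives the claimed formula.

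The step I expect to be the main obstacle is not conceptual but the sign bookkeeping forced by the skew form: every move of an index through $J_{ab}$ or $J^{ab}$ carries a sign, and before substituting~(\ref{branched}) one must fix once and for all the meaning of the lowered-index curvature $R_{abcd}$ relative to $R_{ab}{}^c{}_d$. Correctly collapsing the traces of $V$ over index pairs \emph{other} than the first two down to $V_{abce}\Phi^{ce}$ is the delicate point, and is the one place where a genuine tensor identity, rather than mere reshuffling, is invoked; everything else is a routine but lengthy collection of coefficients.
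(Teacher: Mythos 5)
Your proposal is correct and follows essentially the same route as the paper's own proof: substitute~(\ref{CottonYork}) into $\nabla^cY_{abc}$, commute the derivatives on the $\nabla^c\nabla_a\Phi_{bc}$ and $\nabla^c\nabla_b\Phi_{ac}$ terms via~(\ref{def_of_curvature}) using $\nabla^c\Phi_{bc}=(2n+1)S_b$, then insert the decomposition~(\ref{branched}) and reduce the $V$-contractions with $V_{[abc]d}=0$, $J^{ab}V_{abcd}=0$ and the symmetry of $\Phi_{ab}$ (in effect $V_{acbe}-V_{bcae}=V_{abce}$). This is precisely the paper's computation, differing only in the order in which the coefficients are collected.
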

\begin{proof}
Using (\ref{def_of_curvature}) to commute derivatives
$$\begin{array}{rcl}\nabla_d\nabla_a\Phi_{bc}
&=&(\nabla_d\nabla_a-\nabla_a\nabla_d)\Phi_{bc}+\nabla_a\nabla_d\Phi_{bc}\\
&=&R_{dabe}\Phi^e{}_c+R_{dace}\Phi_b{}^e+\nabla_a\nabla_d\Phi_{bc},
\end{array}$$
whence
$$\begin{array}{rcl}\nabla^c\nabla_a\Phi_{bc}
&=&R{}^c{}_{abe}\Phi^e{}_c+R{}^c{}_{ace}\Phi_b{}^e+\nabla_a\nabla^c\Phi_{bc}\\
&=&R{}^c{}_{abe}\Phi^e{}_c+R{}^c{}_{ace}\Phi_b{}^e+(2n+1)\nabla_aS_b.
\end{array}$$
Substituting from (\ref{branched}) gives
$$\nabla^c\nabla_a\Phi_{bc}=V_{acbe}\Phi^{ce}+2n\Phi_a{}^c\Phi_{bc}
+J_{ab}\Phi_{de}\Phi^{de}+(2n+1)\nabla_aS_b$$
and, similarly,
$$\nabla^c\nabla_b\Phi_{ac}=V_{bcae}\Phi^{ce}-2n\Phi_a{}^c\Phi_{bc}
-J_{ab}\Phi_{de}\Phi^{de}+(2n+1)\nabla_bS_a.$$
Noting that $V_{acbe}-V_{bcae}=V_{abce}$, we may subtract these two equations 
to obtain
$$\begin{array}{l}\nabla^c\nabla_a\Phi_{bc}-\nabla^c\nabla_b\Phi_{ac}\\
\quad{}=V_{abce}\Phi^{ce}+4n\Phi_a{}^c\Phi_{bc}
+2J_{ab}\Phi_{de}\Phi^{de}+(2n+1)(\nabla_aS_b-\nabla_bS_a).
\end{array}$$
Therefore, from the formula~(\ref{CottonYork}) for $Y_{abc}$, we conclude that
$$\begin{array}{rcl}\nabla^cY_{abc}
&=&\nabla^c\nabla_a\Phi_{bc}-\nabla^c\nabla_b\Phi_{ac}
-(\nabla_aS_b-\nabla_bS_a)+2J_{ab}\nabla^cS_c\\
&=&V_{abce}\Phi^{ce}+4n\Phi_a{}^c\Phi_{bc}
+2J_{ab}\Phi_{de}\Phi^{de}\\
&&\qquad{}+2n(\nabla_aS_b-\nabla_bS_a)
+2J_{ab}\nabla^cS_c,
\end{array}$$
as required.\end{proof}

Theorem~\ref{mainthm} has the following immediate consequence.
\begin{cor}\label{maincorollary} 
The curvature of the tractor connection has the form
\begin{equation}\label{Einstein}
(\nabla_a\nabla_b-\nabla_b\nabla_a)\Sigma=2J_{ab}\Theta\Sigma\end{equation}
for some endomorphism $\Theta$ of\/ ${\mathbb{T}}$ if and only if\/ 
$V_{abcd}\equiv 0$. 
\end{cor}
\begin{proof}
Notice that the curvature in the statement of Theorem~\ref{mainthm} is split 
already into its irreducible components according to
\begin{equation}\label{curvature_split}\Lambda^2\otimes\End({\mathbb{T}})=
\big(\Lambda_\perp^2\otimes\End({\mathbb{T}})\big)
\oplus\End({\mathbb{T}}),\end{equation}
where $\Lambda_\perp^2$ denotes the $2$-forms that are trace-free with respect 
to~$J_{ab}$. That the curvature has the form (\ref{Einstein}) is precisely 
that the component in $\Lambda_\perp^2\otimes\End({\mathbb{T}})$ vanish, i.e. 
that 
$$\left[\begin{array}c\sigma\\ \mu_b\\ \rho\end{array}\right]\longmapsto
\left[\begin{array}c0\\
V_{abcd}\mu^d+Y_{abc}\sigma\\
Y_{abc}\mu^c
-\frac{1}{2n}(\nabla^cY_{abc}-V_{abce}\Phi^{ce})\sigma
\end{array}\right]$$
vanish identically. Clearly this implies that $V_{abcd}\equiv 0$ but then the
contracted Bianchi identity (\ref{contractedBianchi}) implies that
$Y_{abc}\equiv 0$.
\end{proof}
\begin{cor} The endomorphism\/ $\Theta:{\mathbb{T}}\to{\mathbb{T}}$ defined 
in Fedosov gauge by
$$\left[\begin{array}c\sigma\\ \mu_b\\ \rho\end{array}\right]\longmapsto
\left[\begin{array}c\rho\\S_c\sigma -\Phi_{cd}\mu^d\\
S_c\mu^c
-\frac{1}{2n}(\Phi_{de}\Phi^{de}+\nabla^cS_c)\sigma\end{array}\right]$$
respects the skew form\/~{\rm(\ref{bigJ})}.
\end{cor}
\begin{proof} One may check by direct calculation that
$\langle\Theta\Sigma,\tilde\Sigma\rangle
+\langle\Sigma,\Theta\tilde\Sigma\rangle=0$. Alternatively, whether or not
$V_{abcd}=0$, the endomorphism $\Theta$ can be viewed via
(\ref{curvature_split}) and Theorem~\ref{mainthm} as an irreducible component
of the tractor curvature. As the tractor connection respects~(\ref{bigJ}), so
does its curvature and any irreducible component thereof.
\end{proof}

We may further pursue the consequences of $V_{abcd}=0$ as follows. 

\begin{lemma}\label{gradTheta}
When \eqref{Einstein} holds and the homomorphism $\Theta$ is 
written in Fedosov gauge, then $\nabla_a\Theta=0$.
\end{lemma}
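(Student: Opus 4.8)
The plan is to read the lemma straight off the second Bianchi identity for the tractor connection, using that in a Fedosov gauge the symplectic form $J_{ab}$ is parallel for the underlying affine connection. By hypothesis the curvature of the tractor connection equals $2J_{ab}\Theta$ for some endomorphism $\Theta$ of ${\mathbb{T}}$, and by the preceding Corollary this forces $V_{abcd}\equiv0$ and hence, via \eqref{contractedBianchi}, $Y_{abc}\equiv0$; comparing with Theorem~\ref{mainthm} shows that in a Fedosov gauge $\Theta$ is the explicit endomorphism
$$\textstyle\Theta\!\left[\begin{array}c\sigma\\ \mu_c\\ \rho\end{array}\right]=
-\left[\begin{array}c\rho\\ S_c\sigma-\Phi_{cd}\mu^d\\
S_c\mu^c-\frac{1}{2n}(\Phi_{de}\Phi^{de}+\nabla^cS_c)\sigma\end{array}\right],$$
and $\nabla_a\Theta$ is to be understood as the covariant derivative of $\Theta$ for the connection induced on $\End({\mathbb{T}})$ by the tractor connection.

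First I would invoke the standard second Bianchi identity for the tractor connection, regarded as a linear connection on the vector bundle ${\mathbb{T}}$. Since the affine connection of the chosen Fedosov gauge is torsion-free, this identity takes the index form $\nabla_{[a}(J_{bc]}\Theta)=0$, in which the affine connection acts on the form indices $bc$ and the induced connection acts on the $\End({\mathbb{T}})$ slot. Because $\nabla_aJ_{bc}=0$ in a Fedosov gauge, $J_{bc}$ passes through the derivative and the identity becomes $J_{[bc}\nabla_{a]}\Theta=0$; that is, the $\End({\mathbb{T}})$-valued $1$-form $\nabla_a\Theta$ is annihilated by the pointwise linear-algebraic map $\phi_a\mapsto\phi_{[a}J_{bc]}$ from $\Lambda^1$ to $\Lambda^3$.

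To finish I would use the non-degeneracy of $J_{ab}$, which—as recalled near the start of the paper—says precisely that $\phi_a\mapsto\phi_{[a}J_{bc]}$ is injective from $\Lambda^1$ to $\Lambda^3$ (this is the Lefschetz injectivity in the admissible range $1\le n-1$, valid since $\dim M=2n\ge4$). An injective bundle map stays injective after tensoring with $\End({\mathbb{T}})$, so $\nabla_a\Theta=0$, as required.

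The one point that needs care is the meaning of $\nabla_a\Theta$: written out in the Fedosov-gauge matrix form of $\Theta$, the induced connection on $\End({\mathbb{T}})$ combines the affine derivatives of $\Phi_{ab}$ and $S_a$ with commutators against the algebraic part of \eqref{tractorconnectioninFedosovgauge}, and the content of the lemma is exactly that these cancel. The Bianchi identity supplies this cancellation for free, whereas a direct computation would essentially have to re-run and extend the manipulation behind Lemma~\ref{fabulous_new_identity}. I therefore expect the Bianchi route to be both the cleanest argument and the place where care is needed—specifically, in checking that torsion-freeness of the Fedosov-gauge affine connection is what licenses the stated index form of the second Bianchi identity.
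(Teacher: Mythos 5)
Your proposal is correct and follows essentially the same route as the paper: the Bianchi identity for the tractor connection gives $\nabla_{[a}(J_{bc]}\Theta)=0$, the Fedosov gauge condition $\nabla_aJ_{bc}=0$ turns this into $J_{[bc}\nabla_{a]}\Theta=0$, and injectivity of $\phi_a\mapsto\phi_{[a}J_{bc]}$ (non-degeneracy of $J_{ab}$ in dimension $2n\geq4$) forces $\nabla_a\Theta=0$. Your extra remarks on the meaning of the induced connection on $\End({\mathbb{T}})$ and the Lefschetz-type injectivity merely make explicit what the paper leaves implicit.
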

\begin{proof} When (\ref{Einstein}) holds, the Bianchi identity for the
connection $\nabla_a$ on ${\mathbb{T}}$ implies that
$\nabla_{[a}(J_{bc]}\Theta)=0$. In Fedosov gauge, this may be rewritten as
$J_{[bc}\nabla_{a]}\Theta=0$. Non-degeneracy of $J_{bc}$ implies that 
$\nabla_a\Theta=0$.
\end{proof}
{From} Theorem~\ref{mainthm}, when (\ref{Einstein}) holds the homomorphism
$\Theta$ is given in Fedosov gauge by 
$$\Theta\!\left[\begin{array}c\sigma\\
\mu_b\\ \rho\end{array}\right]= \left[\begin{array}c
-\rho\\\Phi_{bc}\mu^c-S_b\sigma\\
X\sigma-S_c\mu^c\end{array}\right],\enskip
\mbox{where }X\equiv\textstyle\frac{1}{2n}(\Phi_{de}\Phi^{de}+\nabla^cS_c).$$
But, by using the invariant symplectic form (\ref{bigJ}) on~${\mathbb{T}}$, we
can equally well view $\Theta$ as a section of
{\large$\otimes$}$^2{\mathbb{T}}$. Specifically, 
\begin{equation}\label{Theta}
\Theta=\left[\begin{array}{ccc} -1&0&0\\0&-\Phi_{bc}&S_b\\
0&S_c&-X\end{array}\right].\end{equation}
Note that $\Theta$ is symmetric (as must be the case since $\nabla_a$ preserves
the symplectic form (\ref{bigJ}) on~${\mathbb{T}}$). 
\begin{thm}
If\/ $V_{abcd}=0$, then
\begin{equation}\label{mobility}(\nabla_a\Phi^{bc})_\circ=0\end{equation}
in Fedosov gauge, where $(\enskip)_\circ$ means to take the trace-free part.
\end{thm}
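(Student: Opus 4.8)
The plan is to start from Lemma~\ref{gradTheta}, which tells us that $\nabla_a\Theta=0$ when $V_{abcd}=0$, and then read off the vanishing of $\nabla_a$ applied to each of the matrix entries of $\Theta$ displayed in~\eqref{Theta}. The key observation is that the tractor connection~\eqref{tractorconnectioninFedosovgauge} in Fedosov gauge, when extended to $\bigotimes^2{\mathbb{T}}$ by the Leibniz rule, acts on a symmetric bitractor of the special form~\eqref{Theta}; since $\Theta$ is parallel, each ``component'' of $\nabla_a\Theta$ in the associated graded bundle must separately vanish.

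Concretely, I would write out $\nabla_a\Theta=0$ as a system by feeding $\Theta$ (viewed as a map ${\mathbb{T}}\to{\mathbb{T}}$) through the formula~\eqref{tractorconnectioninFedosovgauge} acting on both the domain and codomain copies of ${\mathbb{T}}$. The top-slot consistency is automatic; the genuinely informative identities come from the middle and bottom slots. In particular, differentiating the $(2,2)$-entry $-\Phi_{bc}$ of $\Theta$ and using that $\Theta$ commutes appropriately with the connection, one extracts a relation of the shape $\nabla_a\Phi_{bc}=(\text{terms built from }S\text{ and }J)$, i.e. $\nabla_a\Phi_{bc}$ lies in the span of $J_{ab}S_c$, $J_{ac}S_b$, $J_{bc}S_a$ (the precise coefficients being forced by consistency with the $(2,3)$ and $(3,3)$ entries, and with the definition $S_a=\frac{1}{2n+1}\nabla^b\Phi_{ab}$). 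Raising indices with $J^{bc}$ — recall the paper's convention $\Phi^{bc}=J^{bd}J^{ce}\Phi_{de}$, $\nabla_a J^{bc}=0$ in Fedosov gauge — this says exactly that $\nabla_a\Phi^{bc}$ is pure trace, so its trace-free part $(\nabla_a\Phi^{bc})_\circ$ vanishes, which is~\eqref{mobility}.

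The main obstacle I anticipate is bookkeeping rather than anything conceptual: one must be careful that ``trace-free part'' is meant with respect to the symplectic form $J^{bc}$ (as stipulated just after the statement, via $\Lambda^2_\perp$), and one must correctly account for the symmetry $\Phi_{bc}=\Phi_{(bc)}$ when deciding which contractions survive. A secondary subtlety is that~\eqref{Theta} presents $\Theta$ as a matrix whose entries live in different density weights, so the connection acts with the inhomogeneous ``$\rho$-into-$\sigma$-into-$\mu$'' cross terms of~\eqref{tractorconnectioninFedosovgauge}; tracking those cross terms is where sign and coefficient errors are most likely. But since Lemma~\ref{gradTheta} already hands us $\nabla_a\Theta=0$ outright, no Bianchi-type argument is needed here — the proof is a direct expansion, and the only real work is isolating the middle-slot equation and contracting it with $J^{bc}$.

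Alternatively, and perhaps more cleanly, I would note that $\nabla_a\Theta=0$ in particular forces the image of the homomorphism $\sigma\mapsto$ (second slot $=\Phi_{ab}\sigma$) appearing in~\eqref{tractorconnectioninFedosovgauge} to interact parallelly with $\Theta$; chasing the middle slot of $0=(\nabla_a\Theta)\!\left[\begin{smallmatrix}0\\0\\1\end{smallmatrix}\right]$ isolates $\nabla_a S_b$ and the middle slot of $0=(\nabla_a\Theta)\!\left[\begin{smallmatrix}0\\\mu_c\\0\end{smallmatrix}\right]$ then isolates $\nabla_a\Phi_{bc}$ in terms of $\nabla_a S_b$ and hence in terms of $J$ and $S$. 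Contracting the resulting expression for $\nabla_a\Phi_{bc}$ with $J^{bc}$ recovers (after using $S_a=\frac{1}{2n+1}\nabla^b\Phi_{ab}$) a consistency identity, and what remains — the trace-free part — is~\eqref{mobility}. This route makes transparent that~\eqref{mobility} is genuinely weaker than $\nabla_a\Theta=0$: it is just the $\Lambda^2_\perp$-component of that parallelism condition, repackaged as a statement purely about $\Phi$.
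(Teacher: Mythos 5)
Your proposal is correct and follows essentially the same route as the paper: invoke Lemma~\ref{gradTheta} to get $\nabla_a\Theta=0$, expand $\nabla_a\Theta$ componentwise using \eqref{tractorconnectioninFedosovgauge} and \eqref{Theta}, read off the middle-slot identity $\nabla_a\Phi_{bc}+J_{ab}S_c+J_{ac}S_b=0$, and raise indices (legitimate since $\nabla_aJ^{bc}=0$ in Fedosov gauge) to get $\nabla_a\Phi^{bc}+\delta_a{}^bS^c+\delta_a{}^cS^b=0$, whose trace-free part is \eqref{mobility}. One small correction: the trace being removed in $(\nabla_a\Phi^{bc})_\circ$ is the natural contraction of $a$ against $b$ or $c$ (the Kronecker-delta trace, as in the mobility equations), not a trace against $J^{bc}$ --- but this does not affect your argument, since the right-hand side you obtain is exactly pure delta-trace.
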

\begin{proof}
{From} (\ref{tractorconnectioninFedosovgauge}) and (\ref{Theta}) we compute
$$\nabla_a\Theta=\left[\begin{array}{ccc} 0&0&0\\
0&-\nabla_a\Phi_{bc}-J_{ab}S_c-J_{ac}S_b&
\nabla_aS_b+J_{ab}X-\Phi_{ac}\Phi_{b}{}^c\\
0&\nabla_aS_c-\Phi_{ab}\Phi^b{}_c+J_{ac}X&
-\nabla_aX+\Phi_{ab}S^b+\Phi_{ac}S^c\end{array}\right].$$
{From} Lemma~\ref{gradTheta} we conclude that 
\begin{equation}\label{gradPhi}\nabla_a\Phi_{bc}+J_{ab}S_c+J_{ac}S_b=0
\end{equation}
and raising indices with $J^{ab}$ gives
$$\nabla_a\Phi^{bc}+\delta_a{}^bS^c+\delta_a{}^cS^b=0,$$
as required.
\end{proof}
Several remarks are in order. Firstly, notice that (\ref{gradPhi}) is only an
extra condition on $\nabla_{(a}\Phi_{b)c}$ since
$\nabla_{[a}\Phi_{b]c}+J_{ab}S_c-J_{c[a}S_{b]}=\frac{1}{2}Y_{abc}$ in
accordance with~(\ref{CottonYork}). Secondly, the partial differential
equations~(\ref{mobility}) are the well-known {\em mobility\/}
equations~\cite{M} of projective differential geometry whose non-degenerate
solutions $\Phi^{ab}$ are in one-to-one correspondence with 
(pseudo-)Riemannian metrics having connection in the projective class
$[\nabla_a]$ of~$\nabla_a$. Thirdly, the other components of $\nabla_a\Theta$
apparently give rise to a whole system of equations,
$$\begin{array}{rcl}
\nabla_a\Phi^{bc}+\delta_a{}^bS^c+\delta_a{}^cS^b&=&0\\
\nabla_aS^b+\delta_a{}^bX-\Phi_{ac}\Phi^{bc}&=&0\\
\nabla_aX-2\Phi_{ab}S^b&=&0
\end{array}$$
but, in fact, this is exactly the prolongation of the (\ref{mobility}) as
derived in~\cite{EM}. Therefore, the vanishing of $\nabla_a\Theta$ is precisely
equivalent to the mobility equations (\ref{mobility}) on~$\Phi^{ab}$. As
described in~\cite{EM}, for (\ref{mobility}) to admit any non-zero solutions
imposes further non-trivial conditions on the projective
structure~$[\nabla_a]$. If $\Phi_{ab}\equiv0$, however, then the connection
$\nabla_a$ from the Fedosov gauge is flat, as can be seen 
from~(\ref{branched}). Finally, notice that the partial differential equations
(\ref{mobility}) are actually much stronger than the mobility equations alone
because $\Phi^{ab}$ is actually part of the curvature of~$\nabla_a$.

\subsection{Examples}\label{examples}
In view of the strength of equations (\ref{mobility}) it is not easy to provide
any non-trivial examples of a conformally Fedosov structure with $V_{abcd}=0$.
Complex projective space ${\mathbb{CP}}_n$ with its usual projective structure
and symplectic form certainly provides the best example. In this case, recall
that
$$V_{abcd}=0\quad\mbox{and}\quad\Phi_{ab}=g_{ab}$$
so ${\mathbb{CP}}_n$ is not projectively flat. In Fedosov gauge $S_a=0$ and 
$$\Theta=\left[\begin{array}{ccc} -1&0&0\\0&-g_{bc}&0\\
0&0&-1\end{array}\right]$$
as a section of {\large$\odot$}$^2{\mathbb{T}}$.

Another example may be based on $S^1\times S^{2n-1}$ with its
conformally symplectic structure induced by the dilation invariant 
$$J_{ab}\equiv\left(1/{\|x\|}\right)^2\omega_{ab}$$
on~${\mathbb{R}}^{2n}\setminus\{0\}$, where $\omega_{ab}$ is the standard
symplectic form
$$\omega_{ab}\,dx^a\,dx^b=dx^1\wedge dx^2+dx^3\wedge dx^4+\cdots\,.$$
The flat connection $\partial_a$ on ${\mathbb{R}}^{2n}\setminus\{0\}$ is also
dilation invariant whence the pair $(J_{ab},\partial_a)$ defines a dilation
invariant conformally Fedosov structure on~${\mathbb{R}}^{2n}\setminus\{0\}$.
Indeed, 
$$\partial_aJ_{bc}=
2\big(\partial_a\log(1/\|x\|)\big)J_{bc}=-2(x_a/\|x\|^2)J_{bc}$$
so (\ref{eqns}) holds with $\alpha_a=-x_a/\|x\|^2$ and $\beta_a=-2x_a/\|x\|^2$. 
The proof of Proposition~\ref{making_the_mastereq} shows that we should take
$$\nabla_a\phi_b=\partial_a\phi_b+x_a\phi_b/\|x\|^2+x_b\phi_a/\|x\|^2$$
as the unique projectively flat connection so that (\ref{mastereq}) holds. 
Notice that, although 
$$V_{abcd}=0\quad\mbox{and}\quad\Phi_{ab}=0$$
in this case, the curvature of the corresponding tractor connection is not
flat. Indeed, we have 
$(\nabla_a\nabla_b-\nabla_b\nabla_a)\Sigma=2J_{ab}\Theta\Sigma$ where, as a 
section of {\large$\odot$}$^2{\mathbb{T}}$,
$$\Theta=\left[\begin{array}{ccc} -1&x_c/\|x\|^2&-1/\|x\|^2\\[3pt]
x_b/\|x\|^2&-x_bx_c/\|x\|^4&x_b/\|x\|^4\\[3pt]
-1/\|x\|^2&x_c/\|x\|^4&-1/\|x\|^4\end{array}\right],$$
which has rank 1.

\section{Calculus on conformally symplectic manifolds}
For most of this section, we shall work on a conformally symplectic manifold
$(M,[J])$ by choosing a non-degenerate $2$-form $J_{ab}$ from the conformal
class~$[J]$. Only after we have established Theorem~\ref{coupledRStheorem}
shall we restore invariance by considering how matters change under conformal
rescaling $J_{ab}\mapsto\hat{J}_{ab}=\Omega^2J_{ab}$.

It has recently been noticed~\cite{S,TY} that on a symplectic manifold, there 
is a natural alternative to the de~Rham complex, which begins
$$0\to\Lambda^0\xrightarrow{\,d\,}\Lambda^1\to\Lambda_\perp^2\to\cdots$$
where $\Lambda_\perp^k$ denotes the bundle of $k$-forms that are trace-free
with respect to~$J$. In~\cite{E2}, which was written as a precursor to the
current article, this construction was generalised to conformally symplectic
manifolds. The result is an elliptic complex
\begin{equation}\label{conformalRS}\begin{array}{rcccccccccccc}
0&\to&\Lambda^0&\xrightarrow{d-2\alpha}&\Lambda^1
&\to&\Lambda_\perp^2
&\to&\Lambda_\perp^3
&\to&\cdots
&\to&\Lambda_\perp^{n}\\[2pt]
&&&&&&&&&&&&\big\downarrow\\
0&\leftarrow&\Lambda^0&\longleftarrow&\Lambda^1
&\leftarrow&\Lambda_\perp^2
&\leftarrow&\Lambda_\perp^3
&\leftarrow&\cdots
&\leftarrow&\Lambda_\perp^{n}
\end{array}\end{equation}
where all operators are first order except for the middle operator, which is
second order. In \S\ref{tractors}, we constructed, for any conformally Fedosov
manifold, its natural {\em tractor bundle\/} ${\mathbb{T}}$ equipped with a
connection $\nabla_a$ having curvature of the form (\ref{Einstein}) for some
endomorphism $\Theta$ of ${\mathbb{T}}$ if and only if $V_{abcd}=0$ (and in
\S\ref{examples} exhibited some conformally Fedosov structures with
$V_{abcd}=0$). In this section we explore the consequences of~(\ref{Einstein}).
In the first instance, the particular bundle ${\mathbb{T}}$ need not concern us
and, instead, we develop a theory of {\em symplectically flat connections\/} on
a conformally symplectic manifolds. Specifically, we shall say that a
connection $\nabla_a$ on a given smooth vector bundle $E$ over a conformally
symplectic manifold $(M,[J])$ is {\em symplectically flat\/} if and only if
\begin{equation}\label{symplectically_flat}
(\nabla_a\nabla_b-\nabla_b\nabla_a)\sigma=2J_{ab}\Theta\sigma
\end{equation} 
for some endomorphism $\Theta$ of~$E$ (as usual, one chooses an arbitrary
torsion-free connection on $\Lambda^1$ to define the left hand side of
(\ref{symplectically_flat}), which then does not depend on this choice). 
Evidently, if $J_{ab}$ is replaced by $\hat{J}_{ab}=\Omega^2J_{ab}$, then 
(\ref{symplectically_flat}) persists with $\Theta$ replaced by 
$\hat\Theta=\Omega^{-2}\Theta$. For symplectically flat connections, our
aim is to construct a version of (\ref{conformalRS}) coupled to~$E$. It is
clear how this complex should start, namely
$$E\xrightarrow{\,\nabla-2\alpha\otimes{\mathrm{Id}}\,}\Lambda^1\otimes E
\longrightarrow\Lambda_\perp^2\otimes E,$$
where $\Gamma(\Lambda^1\otimes E)\ni\phi_a\mapsto
\nabla_{[a}\phi_{b]}-2\alpha_{[a}\phi_{b]}\bmod J_{ab}$ since then
$$\phi_a=\nabla_a\sigma-2\alpha_a\sigma\Rightarrow
\nabla_{[a}\phi_{b]}-2\alpha_{[a}\phi_{b]}
=\nabla_{[a}\nabla_{b]}\sigma=J_{ab}\Theta\sigma=0\bmod J_{ab},$$ 
as required. We shall now construct the coupled version of (\ref{conformalRS}) 
from scratch, including the construction of (\ref{conformalRS}) itself.

The operator
$$D_a=\nabla_a-2\alpha_a:E\to\Lambda^1\otimes E$$
is a connection whose curvature we have, in effect, just computed:
\begin{equation}\label{symplectically_flat_too}(D_aD_b-D_bD_a)\sigma
=(\nabla_a\nabla_b-\nabla_b\nabla_a)\sigma=2J_{ab}\Theta\sigma,\end{equation}
where, as usual, an arbitrary but irrelevant torsion-free connection has been
chosen on $\Lambda^1$ to define $D:\Lambda^1\otimes
E\to\Lambda^1\otimes\Lambda^1\otimes E$. \begin{lemma} The endomorphism
$\Theta:E\to E$ has constant rank.
\end{lemma}
\begin{proof} Locally, we can work in Fedosov gauge to conclude, arguing as in
the proof of Lemma~\ref{gradTheta}, that $\nabla_a\Theta=0$. Having done this,
along any smooth curve in $M$ we can choose, for the 
connection~$\nabla_a$, a covariant constant frame for~$E$.
In this frame, the endomorphism $\Theta$ is then represented by a constant
matrix whose rank is therefore constant.
\end{proof}
\begin{lemma}\label{kerTheta}
The connection $D_a$ on $E$ induces a flat connection on the subbundle
$\ker\Theta\subseteq E$.
\end{lemma}
\begin{proof} If $\sigma\in\Gamma(\ker\Theta)$, then
$$\Theta D_a\sigma=D_a(\Theta\sigma)=0$$
so $D_a\sigma\in\Gamma(\Lambda^1\otimes\ker\Theta)$. That 
$D_a:\ker\Theta\to\Lambda^1\otimes\ker\Theta$ is a connection follows 
immediately and (\ref{symplectically_flat_too}) shows that it is flat.
\end{proof}
\begin{lemma}\label{cokerTheta}
The connection $D_a$ on $E$ induces a flat connection on the bundle
$\coker\Theta\equiv E/\im\Theta$.
\end{lemma}
\begin{proof}Suppose $\eta\in\Gamma(E)$ is of the form $\eta=\Theta\sigma$ for 
some $\sigma\in\Gamma(E)$. Then
$$D_a\eta=D_a(\Theta\sigma)
=\Theta D_a\sigma\in\Gamma(\Lambda^1\otimes\im\Theta).$$
Therefore $D_a:E\to\Lambda^1\otimes E$ descends to 
$D_a:\coker\Theta\to\Lambda^1\otimes\coker\Theta$, for which the Leibnitz rule 
is readily verified. Again (\ref{symplectically_flat_too}) implies that this 
connection is flat.  
\end{proof}
Let us write \underbar{$\ker\Theta$}, respectively \underbar{$\coker\Theta$},
for the sheaf of germs of covariant constant sections of $\ker\Theta$, 
respectively~$\coker\Theta$. According to Lemmata~\ref{kerTheta} 
and~\ref{cokerTheta}, these are locally constant sheaves. 
\begin{lemma}\label{mainlemma}
There is a natural elliptic complex:
$$\begin{array}{cccccccccc}E
&\stackrel{D}{\longrightarrow}&\Lambda^1\otimes E
&\stackrel{D}{\longrightarrow}&\Lambda^2\otimes E
&\stackrel{D}{\longrightarrow}&\Lambda^3\otimes E
&\stackrel{D}{\longrightarrow}&\Lambda^4\otimes E\\
&\begin{picture}(0,0)(0,-3)
\put(-9,6){\vector(3,-2){18}}\end{picture}
&\oplus
&\begin{picture}(0,0)(0,-3)
\put(-9,-6){\vector(3,2){18}}
\put(-9,6){\vector(3,-2){18}}\end{picture}
&\oplus
&\begin{picture}(0,0)(0,-3)
\put(-9,-6){\vector(3,2){18}}
\put(-9,6){\vector(3,-2){18}}\end{picture}
&\oplus
&\begin{picture}(0,0)(0,-3)
\put(-9,-6){\vector(3,2){18}}
\put(-9,6){\vector(3,-2){18}}\end{picture}
&\oplus&\cdots,
\\
&&E&\longrightarrow&\Lambda^1\otimes E
&\longrightarrow&\Lambda^2\otimes E
&\longrightarrow&\Lambda^3\otimes E
\end{array}$$
where the differentials are given by 
$$\sigma\!\mapsto\!\left[\!\begin{array}{c}D\sigma\\ 
\Theta\sigma\end{array}\!\right]
\quad
\left[\!\begin{array}{c}\phi\\ \eta\end{array}\!\right]
\!\mapsto\!\left[\!\begin{array}{c}D\phi-J\otimes\eta\\ 
D\eta-\Theta\phi\end{array}\!\right]
\quad\left[\!\begin{array}{c}\omega\\ \psi\end{array}\!\right]
\!\mapsto\!\left[\!\begin{array}{c}D\omega+J\wedge\psi\\ 
D\psi+\Theta\omega\end{array}\!\right]\enskip\cdots.$$
It is locally exact save for the zeroth and first cohomologies, which may be
identified with \underbar{$\ker\Theta$} and \underbar{$\coker\Theta$}, 
respectively.
\end{lemma}
\begin{proof}
That this is an elliptic complex is easily verified. Since $D\sigma=0$ implies 
$\Theta\sigma=0$ it is also evident that 
$$\ker\left(\sigma\!\mapsto\!\left[\!\begin{array}{c}D\sigma\\ 
\Theta\sigma\end{array}\!\right]\right)={\underbar{$\ker\Theta$}}.$$
To investigate the higher cohomology, locally let us choose a smooth $1$-form
$\tau$ such that $D\tau=d\tau-2\alpha\wedge\tau=J$, which is possible since
(\ref{def_alpha}) holds. This choice allows us to define a new connection 
$$\tilde D\equiv D-\tau\otimes\Theta:E\to\Lambda^1\otimes E$$
on $E$ for which
$$(\tilde D_a\tilde D_b-\tilde D_b\tilde D_a)\sigma
=(D_aD_b-D_bD_a)\sigma-2(D_{[a}\tau_{b]})\Theta\sigma=0.$$
In other words $\tilde D:E\to\Lambda^1\otimes E$ is a flat connection. 
This allows us to show that the cohomology at any level $p\geq 1$ can be
represented by elements of the form
\begin{equation}\label{normal_form}
\left[\!\begin{array}{c}(-1)^{p+1}\tau\wedge\eta\\[5pt] 
\eta\end{array}\!\right]
\in\;\Gamma\left(\!\!\begin{array}{c}\Lambda^p\otimes E\\ 
\oplus\\
\Lambda^{p-1}\otimes E\end{array}\!\!\right).\end{equation}
Specifically, if $p$ is odd, then we are given
$$\left[\!\begin{array}{c}\phi \\[5pt] \eta\end{array}\!\right]
\in\;\Gamma\left(\!\!\begin{array}{c}\Lambda^p\otimes E\\ 
\oplus\\
\Lambda^{p-1}\otimes E\end{array}\!\!\right)\enskip\mbox{such that}\enskip
\left\{\!\!\begin{array}{c}D\phi=J\wedge\eta\\[5pt]
D\eta=\Theta\phi\end{array}\right.$$
and it follows that $\tilde D(\phi-\tau\wedge\eta)
=D\phi-\tau\Theta\phi-(D\tau)\wedge\eta+\tau\wedge D\eta=0$.
As $\tilde D$ is flat, locally we can find 
$\sigma\in\Gamma(\Lambda^{p-1}\otimes E)$ such that 
$$\phi-\tau\wedge\eta=\tilde D\sigma=D\sigma-\tau\wedge\Theta\sigma.$$
If we now replace 
$$\left[\!\begin{array}{c}\phi\\ \eta\end{array}\!\right]
\enskip\mbox{by}\enskip
\left[\!\begin{array}{c}\phi\\ \eta\end{array}\!\right]-
\left[\!\begin{array}{c}D\sigma\\ \Theta\sigma\end{array}\!\right]
=\left[\!\begin{array}{c}\phi-D\sigma\\ \eta-\Theta\sigma
\end{array}\!\right]
=\left[\!\begin{array}{c}\tau\wedge(\eta-\Theta\sigma)\\ \eta-\Theta\sigma
\end{array}\!\right],$$
as we may, then we have obtained a representative of the required form. The 
case of $p$ even is essentially the same save for a few sign changes.
Thus, we are reduced to computing the local cohomology for elements of the 
form~(\ref{normal_form}). The remaining constraint on such elements is that 
$$D\eta=\tau\wedge\Theta\eta\quad\mbox{for}\quad
\eta\in\Gamma(\Lambda^{p-1}\otimes E)$$
but this is exactly that $\tilde D\eta=0$. For $p\geq 2$ it follows that
locally we can find $\xi\in\Gamma(\Lambda^{p-2}\otimes E)$ such that 
$\tilde D\xi=\eta$. When $p=2$, for example, we find that
$$\left[\!\begin{array}{c}\tau\otimes\xi\\ \xi\end{array}\!\right]
\!\mapsto\!\left[\!\begin{array}{c}D(\tau\otimes\xi)-J\otimes\xi\\ 
D\xi-\tau\otimes\Theta\xi\end{array}\!\right]
=\left[\!\begin{array}{c}-\tau\wedge D\xi\\ 
\tilde D\xi\end{array}\!\right]
=\left[\!\begin{array}{c}-\tau\wedge\tilde D\xi\\ 
\tilde D\xi\end{array}\!\right]$$
and so the second cohomology vanishes. The case $p\geq 3$ is similar. When 
$p=1$, the normal form (\ref{normal_form}) identifies the local cohomology as 
$$\frac{\{\eta\in\Gamma(E)\mid\tilde D\eta=0\}}
{\Theta\{\sigma\in\Gamma(E)\mid\tilde D\sigma=0\}},$$
noting that $\tilde D\Theta=D\Theta-\tau\otimes[\Theta,\Theta]=0$ so that this
quotient is well-defined. Since $\tilde D$ is flat, this is a quotient of
finite-dimensional vector spaces, which at each point may be identified with
$\coker\Theta$. Evidently, this local construction, depending on choice
of~$\tau$, agrees with the global construction of \underbar{$\coker\Theta$} 
provided by Lemma~\ref{cokerTheta}.\end{proof}

\begin{thm}[The coupled Rumin--Seshadri complex]\label{coupledRStheorem} 
Suppose $(M,[J])$ is a conformally symplectic manifold and $\nabla_a$ is a
symplectically flat connection on a vector bundle $E$ over~$M$. Choose 
$J_{ab}\in[J]$ and define
$\Theta:E\to E$ by means of~\eqref{symplectically_flat}. Then there is a
natural elliptic complex
\begin{equation}\label{coupledRS}\begin{array}{rcccccccccc}
0&\to&E&\to&\Lambda^1\otimes E
&\to&\Lambda_\perp^2\otimes E
&\to&\cdots
&\to&\Lambda_\perp^{n}\otimes E\\[2pt]
&&&&&&&&&&\big\downarrow\\
0&\leftarrow&E&\leftarrow&\Lambda^1\otimes E
&\leftarrow&\Lambda_\perp^2\otimes E
&\leftarrow&\cdots
&\leftarrow&\Lambda_\perp^{n}\otimes E
\end{array}\end{equation}
where all operators are first order save for the middle operator, which is
second order. This differential complex is locally exact save for its zeroth
and first cohomologies, which may be identified with \underbar{$\ker\Theta$}
and \underbar{$\coker\Theta$}, respectively.
\end{thm}
\begin{proof}
Rearranging the complex from Lemma~\ref{mainlemma} as 
$$\begin{array}{ccccccccccc}
E&\to&\Lambda^1\otimes E&\to&\Lambda^2\otimes E
&\to&\Lambda^3\otimes E&\to&\Lambda^4\otimes E&\to&\cdots\\
&\begin{picture}(0,0)\put(-5,13){\vector(4,-1){80}}\end{picture}&
&\begin{picture}(0,0)\put(-5,13){\line(4,-1){30}}
\put(43,1){\vector(4,-1){30}}\end{picture}&\uparrow
&\begin{picture}(0,0)\put(-5,13){\line(4,-1){30}}
\put(43,1){\vector(4,-1){30}}\end{picture}&\uparrow
&\begin{picture}(0,0)\put(-5,13){\line(4,-1){30}}
\put(43,1){\vector(4,-1){30}}\end{picture}&\uparrow
&\begin{picture}(0,0)\put(-5,13){\line(4,-1){30}}\end{picture}\\
&&&&E&\to&\Lambda^1\otimes E&\to&\Lambda^2\otimes E&\to&\cdots
\end{array}$$
one sees a filtered complex, the spectral sequence of which has as its
$E_1$-level
$$\begin{picture}(352,40)
\put(0,0){\vector(1,0){352}}
\put(0,0){\vector(0,1){40}}
\put(176,20){\makebox(0,0){\footnotesize
$\begin{array}{rcl}
E\!\to\!\Lambda^1\!\!\otimes\!E
\!\to\!\Lambda_\perp^2\!\!\otimes\!E
\!\to\!\cdots\!\to\!\Lambda_\perp^n\!\!\otimes\!E
&0&\\[6pt]
&0&\Lambda_\perp^n\!\!\otimes\!E
\!\to\!\cdots\!\to\!\Lambda_\perp^2\!\!\otimes\!E
\!\to\!\Lambda^1\!\!\otimes\!E\!\to\!E.
\end{array}$}}
\end{picture}$$
Passing to the $E_2$-level constructs the complex (\ref{coupledRS}) and 
Lemma~\ref{mainlemma} gives its cohomology.
\end{proof}

As advised at the beginning of this section, we shall now consider the effect
of replacing $J_{ab}$ by $\hat{J}_{ab}=\Omega^2J_{ab}$. Recall, that
(\ref{def_alpha}) is maintained by replacing $\alpha_a$ with
$\hat{\alpha}_a=\alpha_a+\Upsilon_a$ where $\Upsilon_a=\nabla_a\log\Omega$. It 
follows that $d-2\alpha:\Lambda^0\to\Lambda^1$ for any choice of 
$J_{ab}\in[J]$ is better regarded as an invariantly defined connection
\begin{equation}\label{D}D_a:\Lambda^0[2]\to\Lambda^1[2]\end{equation}
on the bundle $\Lambda^0[2]$ of conformal densities of weight~$2$ 
(characterised by the requirement that $D_{[a}J_{bc]}=0$). The connection 
(\ref{D}) induces connections on all the density bundles $\Lambda^0[w]$ and 
there are conformally invariant versions of (\ref{coupledRS})
\begin{equation}\label{coupledweightedRS}\begin{array}{ccccccc}
\cdots&\to&\Lambda_\perp^p\otimes E[w+2]
&\to&\cdots
&\to&\Lambda_\perp^{n}\otimes E[w+2]\\[2pt]
&&&&&&\big\downarrow\\
\cdots&\leftarrow&\Lambda_\perp^p\otimes E[w+2p-2n]
&\leftarrow&\cdots
&\leftarrow&\Lambda_\perp^{n}\otimes E[w]
\end{array}\end{equation}
for all~$w$. The formal adjoint of this complex is another of the same form but
with $w$ replaced by $-w-2$. Notice that some statements are more naturally 
made by using these invariant connections on densities. For example, the 
endomorphism $\Theta$ from (\ref{symplectically_flat}) should be regarded as 
having conformal weight $-2$ and then $D_a\Theta=0$ (replacing 
Lemma~\ref{gradTheta}). 

\section{Bernstein--Gelfand--Gelfand complexes}
Our aim in this section is typified by the following example. Let us suppose 
that $(M,[J,\nabla])$ is a four-dimensional conformally Fedosov manifold and that 
the invariant curvature $V_{abcd}$ vanishes. According to 
Corollary~\ref{maincorollary} and Theorem~\ref{coupledRStheorem} there is a 
differential complex of the form
$$0\to{\mathbb{T}}\to\Lambda^1\otimes{\mathbb{T}}\to
\Lambda_\perp^2\otimes{\mathbb{T}}\to\Lambda_\perp^2\otimes{\mathbb{T}}[-2]
\to\Lambda^1\otimes{\mathbb{T}}[-4]\to{\mathbb{T}}[-6]\to0$$
(by taking $w=-2$ in~(\ref{coupledweightedRS})), whose local cohomology we
know. To proceed, we should extend the Dynkin diagram notation for irreducible
representations of ${\mathrm{Sp}}(4,{\mathbb{R}})$ to include a conformally
symplectic weight. For reasons indicated in~\cite{BE}, it is convenient to
write
$$\begin{picture}(32,5)
\put(4,1.5){\line(1,0){12}}
\put(4,1.2){\makebox(0,0){$\times$}}
\put(16,1.2){\makebox(0,0){$\bullet$}}
\put(28,1.2){\makebox(0,0){$\bullet$}}
\put(16,0.5){\line(1,0){12}}
\put(16,2.5){\line(1,0){12}}
\put(22,1.5){\makebox(0,0){$\langle$}}
\put(4,8){\makebox(0,0){$\scriptstyle a$}}
\put(16,9){\makebox(0,0){$\scriptstyle b$}}
\put(28,8){\makebox(0,0){$\scriptstyle c$}}
\end{picture}\quad\mbox{for}\quad
\begin{picture}(20,5)
\put(4,1.2){\makebox(0,0){$\bullet$}}
\put(16,1.2){\makebox(0,0){$\bullet$}}
\put(4,0.5){\line(1,0){12}}
\put(4,2.5){\line(1,0){12}}
\put(10,1.5){\makebox(0,0){$\langle$}}
\put(4,9){\makebox(0,0){$\scriptstyle b$}}
\put(16,8){\makebox(0,0){$\scriptstyle c$}}
\end{picture}(TM)\otimes\Lambda^0[a-c]$$
so that, for example,
$$TM=\begin{picture}(32,5)
\put(4,1.5){\line(1,0){12}}
\put(4,1.2){\makebox(0,0){$\times$}}
\put(16,1.2){\makebox(0,0){$\bullet$}}
\put(28,1.2){\makebox(0,0){$\bullet$}}
\put(16,0.5){\line(1,0){12}}
\put(16,2.5){\line(1,0){12}}
\put(22,1.5){\makebox(0,0){$\langle$}}
\put(4,8){\makebox(0,0){$\scriptstyle 0$}}
\put(16,8){\makebox(0,0){$\scriptstyle 1$}}
\put(28,8){\makebox(0,0){$\scriptstyle 0$}}
\end{picture}\qquad\Lambda^1=\begin{picture}(32,5)
\put(4,1.5){\line(1,0){12}}
\put(4,1.2){\makebox(0,0){$\times$}}
\put(16,1.2){\makebox(0,0){$\bullet$}}
\put(28,1.2){\makebox(0,0){$\bullet$}}
\put(16,0.5){\line(1,0){12}}
\put(16,2.5){\line(1,0){12}}
\put(22,1.5){\makebox(0,0){$\langle$}}
\put(4,8){\makebox(0,0){$\scriptstyle -2$}}
\put(16,8){\makebox(0,0){$\scriptstyle 1$}}
\put(28,8){\makebox(0,0){$\scriptstyle 0$}}
\end{picture}\qquad\Lambda_\perp^2=\begin{picture}(32,5)
\put(4,1.5){\line(1,0){12}}
\put(4,1.2){\makebox(0,0){$\times$}}
\put(16,1.2){\makebox(0,0){$\bullet$}}
\put(28,1.2){\makebox(0,0){$\bullet$}}
\put(16,0.5){\line(1,0){12}}
\put(16,2.5){\line(1,0){12}}
\put(22,1.5){\makebox(0,0){$\langle$}}
\put(4,8){\makebox(0,0){$\scriptstyle -3$}}
\put(16,8){\makebox(0,0){$\scriptstyle 0$}}
\put(28,8){\makebox(0,0){$\scriptstyle 1$}}
\end{picture}.$$
In particular,
$${\mathbb{T}}=
\begin{picture}(32,5)
\put(4,1.5){\line(1,0){12}}
\put(4,1.2){\makebox(0,0){$\times$}}
\put(16,1.2){\makebox(0,0){$\bullet$}}
\put(28,1.2){\makebox(0,0){$\bullet$}}
\put(16,0.5){\line(1,0){12}}
\put(16,2.5){\line(1,0){12}}
\put(22,1.5){\makebox(0,0){$\langle$}}
\put(4,8){\makebox(0,0){$\scriptstyle 1$}}
\put(16,8){\makebox(0,0){$\scriptstyle 0$}}
\put(28,8){\makebox(0,0){$\scriptstyle 0$}}
\end{picture}\oplus\begin{picture}(32,5)
\put(4,1.5){\line(1,0){12}}
\put(4,1.2){\makebox(0,0){$\times$}}
\put(16,1.2){\makebox(0,0){$\bullet$}}
\put(28,1.2){\makebox(0,0){$\bullet$}}
\put(16,0.5){\line(1,0){12}}
\put(16,2.5){\line(1,0){12}}
\put(22,1.5){\makebox(0,0){$\langle$}}
\put(4,8){\makebox(0,0){$\scriptstyle -1$}}
\put(16,8){\makebox(0,0){$\scriptstyle 1$}}
\put(28,8){\makebox(0,0){$\scriptstyle 0$}}
\end{picture}\oplus\begin{picture}(32,5)
\put(4,1.5){\line(1,0){12}}
\put(4,1.2){\makebox(0,0){$\times$}}
\put(16,1.2){\makebox(0,0){$\bullet$}}
\put(28,1.2){\makebox(0,0){$\bullet$}}
\put(16,0.5){\line(1,0){12}}
\put(16,2.5){\line(1,0){12}}
\put(22,1.5){\makebox(0,0){$\langle$}}
\put(4,8){\makebox(0,0){$\scriptstyle -1$}}
\put(16,8){\makebox(0,0){$\scriptstyle 0$}}
\put(28,8){\makebox(0,0){$\scriptstyle 0$}}
\end{picture}$$
and our differential complex becomes
$$\begin{array}{cccccc}
{\mathbb{T}}&\hspace{-10pt}\to\hspace{-10pt}
&\Lambda^1\otimes{\mathbb{T}}
&\hspace{-10pt}\to\hspace{-10pt}
&\Lambda_\perp^2\otimes{\mathbb{T}}
&\hspace{-10pt}\to\cdots\\
\|&&\|&&\|\\[3pt]
\begin{array}{c}
\begin{picture}(32,5)
\put(4,1.5){\line(1,0){12}}
\put(4,1.2){\makebox(0,0){$\times$}}
\put(16,1.2){\makebox(0,0){$\bullet$}}
\put(28,1.2){\makebox(0,0){$\bullet$}}
\put(16,0.5){\line(1,0){12}}
\put(16,2.5){\line(1,0){12}}
\put(22,1.5){\makebox(0,0){$\langle$}}
\put(4,8){\makebox(0,0){$\scriptstyle 1$}}
\put(16,8){\makebox(0,0){$\scriptstyle 0$}}
\put(28,8){\makebox(0,0){$\scriptstyle 0$}}
\end{picture}\\ \oplus\\
\begin{picture}(32,5)
\put(4,1.5){\line(1,0){12}}
\put(4,1.2){\makebox(0,0){$\times$}}
\put(16,1.2){\makebox(0,0){$\bullet$}}
\put(28,1.2){\makebox(0,0){$\bullet$}}
\put(16,0.5){\line(1,0){12}}
\put(16,2.5){\line(1,0){12}}
\put(22,1.5){\makebox(0,0){$\langle$}}
\put(4,8){\makebox(0,0){$\scriptstyle -1$}}
\put(16,8){\makebox(0,0){$\scriptstyle 1$}}
\put(28,8){\makebox(0,0){$\scriptstyle 0$}}
\put(35,5){\vector(3,1){68}}
\end{picture}\\ \oplus\\
\begin{picture}(32,5)
\put(4,1.5){\line(1,0){12}}
\put(4,1.2){\makebox(0,0){$\times$}}
\put(16,1.2){\makebox(0,0){$\bullet$}}
\put(28,1.2){\makebox(0,0){$\bullet$}}
\put(16,0.5){\line(1,0){12}}
\put(16,2.5){\line(1,0){12}}
\put(22,1.5){\makebox(0,0){$\langle$}}
\put(4,8){\makebox(0,0){$\scriptstyle -1$}}
\put(16,8){\makebox(0,0){$\scriptstyle 0$}}
\put(28,8){\makebox(0,0){$\scriptstyle 0$}}
\put(35,5){\vector(3,2){25}}
\end{picture}
\end{array}
&&
\begin{array}{c}
\begin{picture}(32,5)
\put(4,1.5){\line(1,0){12}}
\put(4,1.2){\makebox(0,0){$\times$}}
\put(16,1.2){\makebox(0,0){$\bullet$}}
\put(28,1.2){\makebox(0,0){$\bullet$}}
\put(16,0.5){\line(1,0){12}}
\put(16,2.5){\line(1,0){12}}
\put(22,1.5){\makebox(0,0){$\langle$}}
\put(4,8){\makebox(0,0){$\scriptstyle -1$}}
\put(16,8){\makebox(0,0){$\scriptstyle 1$}}
\put(28,8){\makebox(0,0){$\scriptstyle 0$}}
\end{picture}\\ \oplus\\
\begin{picture}(32,5)
\put(4,1.5){\line(1,0){12}}
\put(4,1.2){\makebox(0,0){$\times$}}
\put(16,1.2){\makebox(0,0){$\bullet$}}
\put(28,1.2){\makebox(0,0){$\bullet$}}
\put(16,0.5){\line(1,0){12}}
\put(16,2.5){\line(1,0){12}}
\put(22,1.5){\makebox(0,0){$\langle$}}
\put(4,8){\makebox(0,0){$\scriptstyle -1$}}
\put(16,8){\makebox(0,0){$\scriptstyle 0$}}
\put(28,8){\makebox(0,0){$\scriptstyle 0$}}
\end{picture}\oplus\begin{picture}(32,5)
\put(4,1.5){\line(1,0){12}}
\put(4,1.2){\makebox(0,0){$\times$}}
\put(16,1.2){\makebox(0,0){$\bullet$}}
\put(28,1.2){\makebox(0,0){$\bullet$}}
\put(16,0.5){\line(1,0){12}}
\put(16,2.5){\line(1,0){12}}
\put(22,1.5){\makebox(0,0){$\langle$}}
\put(4,8){\makebox(0,0){$\scriptstyle -3$}}
\put(16,8){\makebox(0,0){$\scriptstyle 2$}}
\put(28,8){\makebox(0,0){$\scriptstyle 0$}}
\end{picture}\oplus\begin{picture}(32,5)
\put(4,1.5){\line(1,0){12}}
\put(4,1.2){\makebox(0,0){$\times$}}
\put(16,1.2){\makebox(0,0){$\bullet$}}
\put(28,1.2){\makebox(0,0){$\bullet$}}
\put(16,0.5){\line(1,0){12}}
\put(16,2.5){\line(1,0){12}}
\put(22,1.5){\makebox(0,0){$\langle$}}
\put(4,8){\makebox(0,0){$\scriptstyle -2$}}
\put(16,8){\makebox(0,0){$\scriptstyle 0$}}
\put(28,8){\makebox(0,0){$\scriptstyle 1$}}
\put(35,5){\vector(2,1){45}}
\end{picture}\\ \oplus\\
\begin{picture}(32,5)
\put(4,1.5){\line(1,0){12}}
\put(4,1.2){\makebox(0,0){$\times$}}
\put(16,1.2){\makebox(0,0){$\bullet$}}
\put(28,1.2){\makebox(0,0){$\bullet$}}
\put(16,0.5){\line(1,0){12}}
\put(16,2.5){\line(1,0){12}}
\put(22,1.5){\makebox(0,0){$\langle$}}
\put(4,8){\makebox(0,0){$\scriptstyle -3$}}
\put(16,8){\makebox(0,0){$\scriptstyle 1$}}
\put(28,8){\makebox(0,0){$\scriptstyle 0$}}
\put(35,5){\vector(3,1){70}}
\end{picture}
\end{array}
&&
\begin{array}{c}
\begin{picture}(32,5)
\put(4,1.5){\line(1,0){12}}
\put(4,1.2){\makebox(0,0){$\times$}}
\put(16,1.2){\makebox(0,0){$\bullet$}}
\put(28,1.2){\makebox(0,0){$\bullet$}}
\put(16,0.5){\line(1,0){12}}
\put(16,2.5){\line(1,0){12}}
\put(22,1.5){\makebox(0,0){$\langle$}}
\put(4,8){\makebox(0,0){$\scriptstyle -2$}}
\put(16,8){\makebox(0,0){$\scriptstyle 0$}}
\put(28,8){\makebox(0,0){$\scriptstyle 1$}}
\end{picture}\\ \oplus\\
\begin{picture}(32,5)
\put(4,1.5){\line(1,0){12}}
\put(4,1.2){\makebox(0,0){$\times$}}
\put(16,1.2){\makebox(0,0){$\bullet$}}
\put(28,1.2){\makebox(0,0){$\bullet$}}
\put(16,0.5){\line(1,0){12}}
\put(16,2.5){\line(1,0){12}}
\put(22,1.5){\makebox(0,0){$\langle$}}
\put(4,8){\makebox(0,0){$\scriptstyle -3$}}
\put(16,8){\makebox(0,0){$\scriptstyle 1$}}
\put(28,8){\makebox(0,0){$\scriptstyle 0$}}
\end{picture}\oplus\begin{picture}(32,5)
\put(4,1.5){\line(1,0){12}}
\put(4,1.2){\makebox(0,0){$\times$}}
\put(16,1.2){\makebox(0,0){$\bullet$}}
\put(28,1.2){\makebox(0,0){$\bullet$}}
\put(16,0.5){\line(1,0){12}}
\put(16,2.5){\line(1,0){12}}
\put(22,1.5){\makebox(0,0){$\langle$}}
\put(4,8){\makebox(0,0){$\scriptstyle -4$}}
\put(16,8){\makebox(0,0){$\scriptstyle 1$}}
\put(28,8){\makebox(0,0){$\scriptstyle 1$}}
\end{picture}\\ \oplus\\
\begin{picture}(32,5)
\put(4,1.5){\line(1,0){12}}
\put(4,1.2){\makebox(0,0){$\times$}}
\put(16,1.2){\makebox(0,0){$\bullet$}}
\put(28,1.2){\makebox(0,0){$\bullet$}}
\put(16,0.5){\line(1,0){12}}
\put(16,2.5){\line(1,0){12}}
\put(22,1.5){\makebox(0,0){$\langle$}}
\put(4,8){\makebox(0,0){$\scriptstyle -4$}}
\put(16,8){\makebox(0,0){$\scriptstyle 0$}}
\put(28,8){\makebox(0,0){$\scriptstyle 1$}}
\put(35,5){\vector(4,3){50}}
\end{picture}
\end{array}
\end{array}$$
in which all bundles have been decomposed into their irreducible parts and some
particular homomorphisms have been highlighted by diagonal arrows. For
$\nabla:{\mathbb{T}}\to\Lambda^1\otimes{\mathbb{T}}$, for example, we see 
from (\ref{tractor_connection}) that
$$\nabla_a\!\left[\!\begin{array}c0\\ \mu_b\\ \rho\end{array}\!\right]
\!\equiv\!
\left[\!\!\!\begin{array}c-\mu_a\\ 
\nabla_a\mu_b-J_{ab}\rho-J_{ab}\alpha^c\mu_c\\ 
\begin{array}{l}
\nabla_a\rho+\Rho_{ab}\mu^b-\frac{3}{2n-1}\Phi_{ab}\mu^b
\end{array}
\end{array}\!\!\!\right]$$
so
$$\Gamma(\begin{picture}(32,5)
\put(4,1.5){\line(1,0){12}}
\put(4,1.2){\makebox(0,0){$\times$}}
\put(16,1.2){\makebox(0,0){$\bullet$}}
\put(28,1.2){\makebox(0,0){$\bullet$}}
\put(16,0.5){\line(1,0){12}}
\put(16,2.5){\line(1,0){12}}
\put(22,1.5){\makebox(0,0){$\langle$}}
\put(4,8){\makebox(0,0){$\scriptstyle -1$}}
\put(16,8){\makebox(0,0){$\scriptstyle 1$}}
\put(28,8){\makebox(0,0){$\scriptstyle 0$}}
\end{picture})\ni\mu_b\longmapsto -\mu_a\in\Gamma(\begin{picture}(32,5)
\put(4,1.5){\line(1,0){12}}
\put(4,1.2){\makebox(0,0){$\times$}}
\put(16,1.2){\makebox(0,0){$\bullet$}}
\put(28,1.2){\makebox(0,0){$\bullet$}}
\put(16,0.5){\line(1,0){12}}
\put(16,2.5){\line(1,0){12}}
\put(22,1.5){\makebox(0,0){$\langle$}}
\put(4,8){\makebox(0,0){$\scriptstyle -1$}}
\put(16,8){\makebox(0,0){$\scriptstyle 1$}}
\put(28,8){\makebox(0,0){$\scriptstyle 0$}}
\end{picture})$$
and
$$\Gamma(\begin{picture}(32,5)
\put(4,1.5){\line(1,0){12}}
\put(4,1.2){\makebox(0,0){$\times$}}
\put(16,1.2){\makebox(0,0){$\bullet$}}
\put(28,1.2){\makebox(0,0){$\bullet$}}
\put(16,0.5){\line(1,0){12}}
\put(16,2.5){\line(1,0){12}}
\put(22,1.5){\makebox(0,0){$\langle$}}
\put(4,8){\makebox(0,0){$\scriptstyle -1$}}
\put(16,8){\makebox(0,0){$\scriptstyle 0$}}
\put(28,8){\makebox(0,0){$\scriptstyle 0$}}
\end{picture})\ni\rho\longmapsto-J_{ab}\rho\in\Gamma(\begin{picture}(32,5)
\put(4,1.5){\line(1,0){12}}
\put(4,1.2){\makebox(0,0){$\times$}}
\put(16,1.2){\makebox(0,0){$\bullet$}}
\put(28,1.2){\makebox(0,0){$\bullet$}}
\put(16,0.5){\line(1,0){12}}
\put(16,2.5){\line(1,0){12}}
\put(22,1.5){\makebox(0,0){$\langle$}}
\put(4,8){\makebox(0,0){$\scriptstyle -1$}}
\put(16,8){\makebox(0,0){$\scriptstyle 0$}}
\put(28,8){\makebox(0,0){$\scriptstyle 0$}}
\end{picture})\subset\Gamma(\Lambda^1\otimes\Lambda^1[1]).$$
Notice that all of these homomorphisms are, in fact, isomorphisms between 
the irreducible bundles involved. Diagram chasing now allows us to cancel 
these irreducible parts leaving a complex
having the same local cohomology as the full tractor-coupled Rumin--Seshadri
complex:
$$0\to\begin{picture}(32,5)
\put(4,1.5){\line(1,0){12}}
\put(4,1.2){\makebox(0,0){$\times$}}
\put(16,1.2){\makebox(0,0){$\bullet$}}
\put(28,1.2){\makebox(0,0){$\bullet$}}
\put(16,0.5){\line(1,0){12}}
\put(16,2.5){\line(1,0){12}}
\put(22,1.5){\makebox(0,0){$\langle$}}
\put(4,8){\makebox(0,0){$\scriptstyle 1$}}
\put(16,8){\makebox(0,0){$\scriptstyle 0$}}
\put(28,8){\makebox(0,0){$\scriptstyle 0$}}
\end{picture}\xrightarrow{\nabla^2}\begin{picture}(32,5)
\put(4,1.5){\line(1,0){12}}
\put(4,1.2){\makebox(0,0){$\times$}}
\put(16,1.2){\makebox(0,0){$\bullet$}}
\put(28,1.2){\makebox(0,0){$\bullet$}}
\put(16,0.5){\line(1,0){12}}
\put(16,2.5){\line(1,0){12}}
\put(22,1.5){\makebox(0,0){$\langle$}}
\put(4,8){\makebox(0,0){$\scriptstyle -3$}}
\put(16,8){\makebox(0,0){$\scriptstyle 2$}}
\put(28,8){\makebox(0,0){$\scriptstyle 0$}}
\end{picture}\xrightarrow{\nabla}\begin{picture}(32,5)
\put(4,1.5){\line(1,0){12}}
\put(4,1.2){\makebox(0,0){$\times$}}
\put(16,1.2){\makebox(0,0){$\bullet$}}
\put(28,1.2){\makebox(0,0){$\bullet$}}
\put(16,0.5){\line(1,0){12}}
\put(16,2.5){\line(1,0){12}}
\put(22,1.5){\makebox(0,0){$\langle$}}
\put(4,8){\makebox(0,0){$\scriptstyle -4$}}
\put(16,8){\makebox(0,0){$\scriptstyle 1$}}
\put(28,8){\makebox(0,0){$\scriptstyle 1$}}
\end{picture}\xrightarrow{\nabla^2}\begin{picture}(32,5)
\put(4,1.5){\line(1,0){12}}
\put(4,1.2){\makebox(0,0){$\times$}}
\put(16,1.2){\makebox(0,0){$\bullet$}}
\put(28,1.2){\makebox(0,0){$\bullet$}}
\put(16,0.5){\line(1,0){12}}
\put(16,2.5){\line(1,0){12}}
\put(22,1.5){\makebox(0,0){$\langle$}}
\put(4,8){\makebox(0,0){$\scriptstyle -6$}}
\put(16,8){\makebox(0,0){$\scriptstyle 1$}}
\put(28,8){\makebox(0,0){$\scriptstyle 1$}}
\end{picture}\xrightarrow{\nabla}\begin{picture}(32,5)
\put(4,1.5){\line(1,0){12}}
\put(4,1.2){\makebox(0,0){$\times$}}
\put(16,1.2){\makebox(0,0){$\bullet$}}
\put(28,1.2){\makebox(0,0){$\bullet$}}
\put(16,0.5){\line(1,0){12}}
\put(16,2.5){\line(1,0){12}}
\put(22,1.5){\makebox(0,0){$\langle$}}
\put(4,8){\makebox(0,0){$\scriptstyle -7$}}
\put(16,8){\makebox(0,0){$\scriptstyle 2$}}
\put(28,8){\makebox(0,0){$\scriptstyle 0$}}
\end{picture}\xrightarrow{\nabla^2}\begin{picture}(32,5)
\put(4,1.5){\line(1,0){12}}
\put(4,1.2){\makebox(0,0){$\times$}}
\put(16,1.2){\makebox(0,0){$\bullet$}}
\put(28,1.2){\makebox(0,0){$\bullet$}}
\put(16,0.5){\line(1,0){12}}
\put(16,2.5){\line(1,0){12}}
\put(22,1.5){\makebox(0,0){$\langle$}}
\put(4,8){\makebox(0,0){$\scriptstyle -7$}}
\put(16,8){\makebox(0,0){$\scriptstyle 0$}}
\put(28,8){\makebox(0,0){$\scriptstyle 0$}}
\end{picture}\to 0.$$
In this simple case it is straightforward to determine these operators
explicitly. For example, we see from (\ref{tractor_connection}) that the first
one is
$$\sigma\longmapsto
\nabla_a\nabla_b\sigma+\Rho_{ab}\sigma-\Phi_{ab}\sigma.$$
By construction, this operator is conformally invariant but we can also check 
this directly:
$$\begin{array}{rcl}
\hat\nabla_a\hat\nabla_b\sigma+\hat\Rho_{ab}\sigma-\hat\Phi_{ab}\sigma
&=&\nabla_a(\nabla_b\sigma+\Upsilon_b\sigma)
-\Upsilon_b(\nabla_a\sigma+\Upsilon_a\sigma)\\
&&\quad{}+(\Rho_{ab}-\nabla_a\Upsilon_b+\Upsilon_a\Upsilon_b)\sigma
-\Phi_{ab}\sigma\\[4pt]
&=&\nabla_a\nabla_b\sigma+\Rho_{ab}\sigma-\Phi_{ab}\sigma.\end{array}$$
The next operator $\nabla:\begin{picture}(32,5)
\put(4,1.5){\line(1,0){12}}
\put(4,1.2){\makebox(0,0){$\times$}}
\put(16,1.2){\makebox(0,0){$\bullet$}}
\put(28,1.2){\makebox(0,0){$\bullet$}}
\put(16,0.5){\line(1,0){12}}
\put(16,2.5){\line(1,0){12}}
\put(22,1.5){\makebox(0,0){$\langle$}}
\put(4,8){\makebox(0,0){$\scriptstyle -3$}}
\put(16,8){\makebox(0,0){$\scriptstyle 2$}}
\put(28,8){\makebox(0,0){$\scriptstyle 0$}}
\end{picture}\to\begin{picture}(32,5)
\put(4,1.5){\line(1,0){12}}
\put(4,1.2){\makebox(0,0){$\times$}}
\put(16,1.2){\makebox(0,0){$\bullet$}}
\put(28,1.2){\makebox(0,0){$\bullet$}}
\put(16,0.5){\line(1,0){12}}
\put(16,2.5){\line(1,0){12}}
\put(22,1.5){\makebox(0,0){$\langle$}}
\put(4,8){\makebox(0,0){$\scriptstyle -4$}}
\put(16,8){\makebox(0,0){$\scriptstyle 1$}}
\put(28,8){\makebox(0,0){$\scriptstyle 1$}}
\end{picture}$ is given by
$$\textstyle\psi_{ab}\longmapsto\nabla_{[a}\psi_{b]c}
-\frac1{2n+1}\left[J_{ab}J^{de}\nabla_d\psi_{ec}
-J^{de}(\nabla_d\psi_{e[a})J_{b]c}\right]\enskip
\mbox{for }\psi_{ab}=\psi_{(ab)}.$$

\begin{thm}\label{BGGtheorem}
Suppose $(M,[J,\nabla])$ is a conformally Fedosov manifold of dimension $2n$
whose invariant curvature $V_{abcd}$ vanishes. Then for any $n+1$ non-negative
integers $a,b,c,\cdots d,e$ there is a differential complex
\begin{equation}\label{BGGcomplex}\begin{array}{rll}\begin{picture}(72,5)
\put(4,1.5){\line(1,0){30}} 
\put(4,1.2){\makebox(0,0){$\times$}}
\put(16,1.2){\makebox(0,0){$\bullet$}} 
\put(28,1.2){\makebox(0,0){$\bullet$}}
\put(43,1.2){\makebox(0,0){$\cdots$}} 
\put(50,1.5){\line(1,0){6}}
\put(56,1.2){\makebox(0,0){$\bullet$}} 
\put(56,0.5){\line(1,0){12}}
\put(56,2.5){\line(1,0){12}} 
\put(62,1.5){\makebox(0,0){$\langle$}}
\put(68,1.2){\makebox(0,0){$\bullet$}} 
\put(4,6){\makebox(0,0)[b]{$\scriptstyle a$}} 
\put(16,6){\makebox(0,0)[b]{$\scriptstyle b$}}
\put(28,6){\makebox(0,0)[b]{$\scriptstyle c$}}
\put(56,6){\makebox(0,0)[b]{$\scriptstyle d$}}
\put(68,6){\makebox(0,0)[b]{$\scriptstyle e$}}
\end{picture}&\xrightarrow{\,\nabla^{a+1}\,}& 
\begin{picture}(102,5)
\put(4,1.5){\line(1,0){60}} 
\put(4,1.2){\makebox(0,0){$\times$}}
\put(36,1.2){\makebox(0,0){$\bullet$}} 
\put(58,1.2){\makebox(0,0){$\bullet$}}
\put(73,1.2){\makebox(0,0){$\cdots$}} 
\put(80,1.5){\line(1,0){6}}
\put(86,1.2){\makebox(0,0){$\bullet$}} 
\put(86,0.5){\line(1,0){12}}
\put(86,2.5){\line(1,0){12}} 
\put(92,1.5){\makebox(0,0){$\langle$}}
\put(98,1.2){\makebox(0,0){$\bullet$}} 
\put(4,5){\makebox(0,0)[b]{$\scriptstyle-a-2$}} 
\put(36,5){\makebox(0,0)[b]{$\scriptstyle a+b+1$}}
\put(58,6){\makebox(0,0)[b]{$\scriptstyle c$}}
\put(86,6){\makebox(0,0)[b]{$\scriptstyle d$}}
\put(98,6){\makebox(0,0)[b]{$\scriptstyle e$}} \end{picture}\\
&\quad\xrightarrow{\,\nabla^{b+1}\,}&
\quad\begin{picture}(97,5)
\put(4,1.5){\line(1,0){55}}
\put(4,1.2){\makebox(0,0){$\times$}}
\put(31,1.2){\makebox(0,0){$\bullet$}}
\put(53,1.2){\makebox(0,0){$\bullet$}}
\put(68,1.2){\makebox(0,0){$\cdots$}}
\put(75,1.5){\line(1,0){6}}
\put(81,1.2){\makebox(0,0){$\bullet$}}
\put(81,0.5){\line(1,0){12}}
\put(81,2.5){\line(1,0){12}}
\put(87,1.5){\makebox(0,0){$\langle$}}
\put(93,1.2){\makebox(0,0){$\bullet$}}
\put(4,5){\makebox(0,0)[b]{$\scriptstyle -a-b-3$}}
\put(31,6){\makebox(0,0)[b]{$\scriptstyle a$}}
\put(53,5){\makebox(0,0)[b]{$\scriptstyle b+c+1$}}
\put(81,6){\makebox(0,0)[b]{$\scriptstyle d$}}
\put(93,6){\makebox(0,0)[b]{$\scriptstyle e$}}
\end{picture}\\
&\qquad\makebox[0pt][l]{$\xrightarrow{\,\nabla^{c+1}\,}\enskip\cdots\,,$}
\end{array}\end{equation}
which is locally exact save at the zeroth and first positions, where its local 
cohomology may be identified with the locally constant sheaves 
\underbar{$\ker\Theta$} and \underbar{$\coker\Theta$}, respectively. Here, 
$$\Theta\in{\mathrm{Aut}}\Big(\begin{picture}(72,5)
\put(4,1.5){\line(1,0){30}}
\put(4,1.2){\makebox(0,0){$\bullet$}}
\put(16,1.2){\makebox(0,0){$\bullet$}}
\put(28,1.2){\makebox(0,0){$\bullet$}}
\put(43,1.2){\makebox(0,0){$\cdots$}}
\put(50,1.5){\line(1,0){6}}
\put(56,1.2){\makebox(0,0){$\bullet$}}
\put(56,0.5){\line(1,0){12}}
\put(56,2.5){\line(1,0){12}}
\put(62,1.5){\makebox(0,0){$\langle$}}
\put(68,1.2){\makebox(0,0){$\bullet$}}
\put(4,6){\makebox(0,0)[b]{$\scriptstyle a$}}
\put(16,6){\makebox(0,0)[b]{$\scriptstyle b$}}
\put(28,6){\makebox(0,0)[b]{$\scriptstyle c$}}
\put(56,6){\makebox(0,0)[b]{$\scriptstyle d$}}
\put(68,6){\makebox(0,0)[b]{$\scriptstyle e$}}
\end{picture}\big({\mathbb{T}}\big)\Big)$$
is induced by $\Theta:{\mathbb{T}}\to{\mathbb{T}}$ from \eqref{Einstein} and 
$\begin{picture}(72,5)
\put(4,1.5){\line(1,0){30}}
\put(4,1.2){\makebox(0,0){$\bullet$}}
\put(16,1.2){\makebox(0,0){$\bullet$}}
\put(28,1.2){\makebox(0,0){$\bullet$}}
\put(43,1.2){\makebox(0,0){$\cdots$}}
\put(50,1.5){\line(1,0){6}}
\put(56,1.2){\makebox(0,0){$\bullet$}}
\put(56,0.5){\line(1,0){12}}
\put(56,2.5){\line(1,0){12}}
\put(62,1.5){\makebox(0,0){$\langle$}}
\put(68,1.2){\makebox(0,0){$\bullet$}}
\put(4,6){\makebox(0,0)[b]{$\scriptstyle a$}}
\put(16,6){\makebox(0,0)[b]{$\scriptstyle b$}}
\put(28,6){\makebox(0,0)[b]{$\scriptstyle c$}}
\put(56,6){\makebox(0,0)[b]{$\scriptstyle d$}}
\put(68,6){\makebox(0,0)[b]{$\scriptstyle e$}}
\end{picture}\big({\mathbb{T}}\big)$
is the bundle associated to ${\mathbb{T}}$ via the 
${\mathrm{Sp}}(2n+2,{\mathbb{R}})$-module
$\begin{picture}(72,5)
\put(4,1.5){\line(1,0){30}}
\put(4,1.2){\makebox(0,0){$\bullet$}}
\put(16,1.2){\makebox(0,0){$\bullet$}}
\put(28,1.2){\makebox(0,0){$\bullet$}}
\put(43,1.2){\makebox(0,0){$\cdots$}}
\put(50,1.5){\line(1,0){6}}
\put(56,1.2){\makebox(0,0){$\bullet$}}
\put(56,0.5){\line(1,0){12}}
\put(56,2.5){\line(1,0){12}}
\put(62,1.5){\makebox(0,0){$\langle$}}
\put(68,1.2){\makebox(0,0){$\bullet$}}
\put(4,6){\makebox(0,0)[b]{$\scriptstyle a$}}
\put(16,6){\makebox(0,0)[b]{$\scriptstyle b$}}
\put(28,6){\makebox(0,0)[b]{$\scriptstyle c$}}
\put(56,6){\makebox(0,0)[b]{$\scriptstyle d$}}
\put(68,6){\makebox(0,0)[b]{$\scriptstyle e$}}
\end{picture}$, 
bearing in mind that the non-degenerate skew form \eqref{bigJ} reduces the
structure group of\/ ${\mathbb{T}}$ to ${\mathrm{Sp}}(2n+2,{\mathbb{R}})$.
\end{thm}
\begin{proof}
One considers the coupled 
Rumin-Seshadri complex (\ref{coupledweightedRS}) with $E=\begin{picture}(72,5)
\put(4,1.5){\line(1,0){30}}
\put(4,1.2){\makebox(0,0){$\bullet$}}
\put(16,1.2){\makebox(0,0){$\bullet$}}
\put(28,1.2){\makebox(0,0){$\bullet$}}
\put(43,1.2){\makebox(0,0){$\cdots$}}
\put(50,1.5){\line(1,0){6}}
\put(56,1.2){\makebox(0,0){$\bullet$}}
\put(56,0.5){\line(1,0){12}}
\put(56,2.5){\line(1,0){12}}
\put(62,1.5){\makebox(0,0){$\langle$}}
\put(68,1.2){\makebox(0,0){$\bullet$}}
\put(4,6){\makebox(0,0)[b]{$\scriptstyle a$}}
\put(16,6){\makebox(0,0)[b]{$\scriptstyle b$}}
\put(28,6){\makebox(0,0)[b]{$\scriptstyle c$}}
\put(56,6){\makebox(0,0)[b]{$\scriptstyle d$}}
\put(68,6){\makebox(0,0)[b]{$\scriptstyle e$}}
\end{picture}\big({\mathbb{T}}\big)$ and~$w=-2$. 
According to Corollary~\ref{maincorollary} and Theorem~\ref{coupledRStheorem},
this complex is locally exact save at the zeroth and first positions, where its
local cohomology may be identified with \underbar{$\ker\Theta$} and
\underbar{$\coker\Theta$}, respectively. We assert that, as in the example
discussed at the beginning of this section, there are algebraic cancellations
resulting in the complex~(\ref{BGGcomplex}). These automatic cancellations are 
a consequence of Kostant's theorem \cite{K} on Lie algebra cohomology as is now
familiar in parabolic geometry~\cite{CS}.
\end{proof}

\paragraph{\bf Remarks} This method of proof is often referred to as employing
the `BGG-machinery' where BGG stands for Bernstein--Gelfand--Gelfand in
reference to~\cite{BGG} and~\cite{Lepowsky}, where dual complexes are 
constructed on the
level of induced modules in representation theory. The complexes
(\ref{BGGcomplex}) lie outside the parabolic realm but follow exactly the BGG
complexes on the contact projective sphere~\cite[\S1.1.4]{CS}. More precisely,
the sphere $S^{2n+1}$ as a homogeneous space for
${\mathrm{Sp}}(2n+2,{\mathbb{R}})$, is the flat model for a type of parabolic
geometry known as {\em contact projective\/}~\cite{F}. A link between these 
constructions is provided by the fibration 
$$S^{2n+1}\to{\mathbb{CP}}_n$$
and, following~\cite{capSalac} and bearing in mind that ${\mathbb{CP}}_n$ is
our basic example of a conformally Fedosov manifold with $V_{abcd}=0$, the BGG
complexes (\ref{BGGcomplex}) may be seen as symmetry reductions of the usual
BGG complexes on the contact projective sphere $S^{2n+1}$ as a parabolic
geometry. 
We should also mention that the authors of \cite{capSalac} are currently
preparing an article based on~\cite{capSrni14}, which further applies the
symmetry reduction technique of \cite{capSalac} to the other contact parabolic
geometries (as listed in \cite[\S4.2]{CS}), and which we expect also applies to
curved conformally Fedosov structures, as suitable symmetry reductions of
curved contact projective structures.

In any case, the initial portion 
$$\begin{picture}(72,5)
\put(4,1.5){\line(1,0){30}}
\put(4,1.2){\makebox(0,0){$\times$}}
\put(16,1.2){\makebox(0,0){$\bullet$}}
\put(28,1.2){\makebox(0,0){$\bullet$}}
\put(43,1.2){\makebox(0,0){$\cdots$}}
\put(50,1.5){\line(1,0){6}}
\put(56,1.2){\makebox(0,0){$\bullet$}}
\put(56,0.5){\line(1,0){12}}
\put(56,2.5){\line(1,0){12}}
\put(62,1.5){\makebox(0,0){$\langle$}}
\put(68,1.2){\makebox(0,0){$\bullet$}}
\put(4,6){\makebox(0,0)[b]{$\scriptstyle 0$}}
\put(16,6){\makebox(0,0)[b]{$\scriptstyle 1$}}
\put(28,6){\makebox(0,0)[b]{$\scriptstyle 0$}}
\put(56,6){\makebox(0,0)[b]{$\scriptstyle 0$}}
\put(68,6){\makebox(0,0)[b]{$\scriptstyle 0$}}
\end{picture}\xrightarrow{\,\nabla\,}\enskip\begin{picture}(72,5)
\put(4,1.5){\line(1,0){30}}
\put(4,1.2){\makebox(0,0){$\times$}}
\put(16,1.2){\makebox(0,0){$\bullet$}}
\put(28,1.2){\makebox(0,0){$\bullet$}}
\put(43,1.2){\makebox(0,0){$\cdots$}}
\put(50,1.5){\line(1,0){6}}
\put(56,1.2){\makebox(0,0){$\bullet$}}
\put(56,0.5){\line(1,0){12}}
\put(56,2.5){\line(1,0){12}}
\put(62,1.5){\makebox(0,0){$\langle$}}
\put(68,1.2){\makebox(0,0){$\bullet$}}
\put(2,5.5){\makebox(0,0)[b]{$\scriptstyle -2$}}
\put(16,6){\makebox(0,0)[b]{$\scriptstyle 2$}}
\put(28,6){\makebox(0,0)[b]{$\scriptstyle 0$}}
\put(56,6){\makebox(0,0)[b]{$\scriptstyle 0$}}
\put(68,6){\makebox(0,0)[b]{$\scriptstyle 0$}}
\end{picture}\xrightarrow{\,\nabla^2\,}\enskip\begin{picture}(72,5)
\put(4,1.5){\line(1,0){30}}
\put(4,1.2){\makebox(0,0){$\times$}}
\put(16,1.2){\makebox(0,0){$\bullet$}}
\put(28,1.2){\makebox(0,0){$\bullet$}}
\put(43,1.2){\makebox(0,0){$\cdots$}}
\put(50,1.5){\line(1,0){6}}
\put(56,1.2){\makebox(0,0){$\bullet$}}
\put(56,0.5){\line(1,0){12}}
\put(56,2.5){\line(1,0){12}}
\put(62,1.5){\makebox(0,0){$\langle$}}
\put(68,1.2){\makebox(0,0){$\bullet$}}
\put(2,5.5){\makebox(0,0)[b]{$\scriptstyle -4$}}
\put(16,6){\makebox(0,0)[b]{$\scriptstyle 0$}}
\put(28,6){\makebox(0,0)[b]{$\scriptstyle 2$}}
\put(56,6){\makebox(0,0)[b]{$\scriptstyle 0$}}
\put(68,6){\makebox(0,0)[b]{$\scriptstyle 0$}}
\end{picture}$$
of a BGG complex (\ref{BGGcomplex}) on ${\mathbb{CP}}_n$ is the subject of
\cite[Theorem~8]{EG} where it is shown that the second operator provides
exactly the integrability conditions for the range of the Killing operator
on~${\mathbb{CP}}_n$. This conclusion is immediate from
Theorem~\ref{BGGtheorem}: since ${\mathbb{CP}}_n$ is simply-connected, there is
no global cohomology arising from~\underbar{$\coker\Theta$}.

The initial portion
$$\begin{picture}(82,5)
\put(4,1.5){\line(1,0){40}}
\put(4,1.2){\makebox(0,0){$\times$}}
\put(21,1.2){\makebox(0,0){$\bullet$}}
\put(38,1.2){\makebox(0,0){$\bullet$}}
\put(53,1.2){\makebox(0,0){$\cdots$}}
\put(60,1.5){\line(1,0){6}}
\put(66,1.2){\makebox(0,0){$\bullet$}}
\put(66,0.5){\line(1,0){12}}
\put(66,2.5){\line(1,0){12}}
\put(72,1.5){\makebox(0,0){$\langle$}}
\put(78,1.2){\makebox(0,0){$\bullet$}}
\put(4,6){\makebox(0,0)[b]{$\scriptstyle 0$}}
\put(21,6){\makebox(0,0)[b]{$\scriptstyle \ell-1$}}
\put(38,6){\makebox(0,0)[b]{$\scriptstyle 0$}}
\put(66,6){\makebox(0,0)[b]{$\scriptstyle 0$}}
\put(78,6){\makebox(0,0)[b]{$\scriptstyle 0$}}
\end{picture}\xrightarrow{\,\nabla\,}\enskip
\begin{picture}(72,5)
\put(4,1.5){\line(1,0){30}}
\put(4,1.2){\makebox(0,0){$\times$}}
\put(16,1.2){\makebox(0,0){$\bullet$}}
\put(28,1.2){\makebox(0,0){$\bullet$}}
\put(43,1.2){\makebox(0,0){$\cdots$}}
\put(50,1.5){\line(1,0){6}}
\put(56,1.2){\makebox(0,0){$\bullet$}}
\put(56,0.5){\line(1,0){12}}
\put(56,2.5){\line(1,0){12}}
\put(62,1.5){\makebox(0,0){$\langle$}}
\put(68,1.2){\makebox(0,0){$\bullet$}}
\put(2,5.5){\makebox(0,0)[b]{$\scriptstyle -2$}}
\put(16,6){\makebox(0,0)[b]{$\scriptstyle \ell$}}
\put(28,6){\makebox(0,0)[b]{$\scriptstyle 0$}}
\put(56,6){\makebox(0,0)[b]{$\scriptstyle 0$}}
\put(68,6){\makebox(0,0)[b]{$\scriptstyle 0$}}
\end{picture}\xrightarrow{\,\nabla^2\,}\quad\begin{picture}(77,5)
\put(4,1.5){\line(1,0){35}}
\put(4,1.2){\makebox(0,0){$\times$}}
\put(21,1.2){\makebox(0,0){$\bullet$}}
\put(33,1.2){\makebox(0,0){$\bullet$}}
\put(48,1.2){\makebox(0,0){$\cdots$}}
\put(55,1.5){\line(1,0){6}}
\put(61,1.2){\makebox(0,0){$\bullet$}}
\put(61,0.5){\line(1,0){12}}
\put(61,2.5){\line(1,0){12}}
\put(67,1.5){\makebox(0,0){$\langle$}}
\put(73,1.2){\makebox(0,0){$\bullet$}}
\put(0,5.5){\makebox(0,0)[b]{$\scriptstyle -\ell-2$}}
\put(21,6){\makebox(0,0)[b]{$\scriptstyle 0$}}
\put(33,6){\makebox(0,0)[b]{$\scriptstyle \ell$}}
\put(61,6){\makebox(0,0)[b]{$\scriptstyle 0$}}
\put(73,6){\makebox(0,0)[b]{$\scriptstyle 0$}}
\end{picture}$$
of (\ref{BGGcomplex}) on ${\mathbb{CP}}_n$ is the subject
of~\cite[Theorem~9]{EG}.

Finally, we recall that in ordinary parabolic geometry, there are, not only
`BGG complexes' on the flat model $G/P$, as presented in~\cite{BE}, but also
`BGG sequences' on the corresponding curved geometries, as constructed
in~\cite{CD,CSS}. We anticipate BGG sequences in the conformally 
Fedosov setting and also that their construction follow most easily the route 
recently presented in~\cite{CS1,CS2}.

%

\end{document}